\theoremstyle{plain}
\newtheorem{theorem}{Theorem}[section]
\newtheorem*{theorem-nn}{Theorem}
\newtheorem{lemma}[theorem]{Lemma}
\newtheorem*{proposition-nn}{Proposition}
\theoremstyle{definition}
\newtheorem{definition}[theorem]{Definition}
\newtheorem{example}[theorem]{Example}
\newtheorem{remark}[theorem]{Remark}
\newtheorem*{acknowledgments}{Acknowledgments}
\theoremstyle{remark}
\newcommand{\bZ}{\mathbbm{Z}}\newcommand{\bQ}{\mathbbm{Q}}
\newcommand{\bG}{\mathbbm{G}}
\newcommand{\bF}{\mathbbm{F}}
\newcommand{\cC}{\mathcal{C}}\newcommand{\cD}{\mathcal{D}}
\newcommand{\cH}{\mathcal{H}}\newcommand{\cS}{\mathcal{S}}
\newcommand{\GL}{{\rm GL}}\newcommand{\SL}{{\rm SL}}
\newcommand{\PGL}{{\rm PGL}}\newcommand{\PSL}{{\rm PSL}}
\newcounter{sub}
{\begin{list}{(\arabic{sub})}{\usecounter{sub}%
\setlength{\leftmargin}{2em}}}{\end{list}}
\def\rank{\mbox{rank }}
\title[Rationality problem for norm one tori]
{Rationality problem for norm one tori}
\author[A. Hoshi]{Akinari Hoshi}
\address{Department of Mathematics, Niigata University, Niigata 950-2181, Japan}
\email{hoshi@math.sc.niigata-u.ac.jp}
\author[A. Yamasaki]{Aiichi Yamasaki}
\address{Department of Mathematics, Kyoto University, Kyoto 606-8502, Japan}
\email{aiichi.yamasaki@gmail.com}
\thanks{{\it Key words and phrases.} Rationality problem, 
algebraic tori, 
stably rational, retract rational, flabby resolution.\\ 
This work was partially supported by JSPS KAKENHI Grant Numbers 
25400027, 16K05059, 19K03418.
%Some part of this work was done during the authors' visit to 
%National Taiwan University, the National Center for Theoretic Sciences 
%(Taipei Office), whose support is gratefully acknowledged.
}
\subjclass[2010]{Primary 11E72, 12F20, 13A50, 14E08, 20C10, 20G15.}
\begin{document}
\begin{abstract}
We classify stably/retract rational norm one tori in dimension $p-1$ where $p$ is a prime number and in dimension up to ten with some minor exceptions. 
\end{abstract}

\maketitle

\tableofcontents

%%%%%%%%%%%%%%%%%%%%%%%%%%%%%%%%%%%%%%%%%%%%%%%%%%%%%%%%
\section{Introduction}\label{seInt}

Let $k$ be a field and $K$ %and $K^\prime$ 
be a finitely generated field extension of $k$. 
A field $K$ is called {\it rational over $k$} 
(or {\it $k$-rational} for short) 
if $K$ is purely transcendental over $k$, 
i.e. $K$ is isomorphic to $k(x_1,\ldots,x_n)$, 
the rational function field over $k$ with $n$ variables $x_1,\ldots,x_n$ 
for some integer $n$. 
$K$ is called {\it stably $k$-rational} 
if $K(y_1,\ldots,y_m)$ is $k$-rational for some algebraically 
independent elements $y_1,\ldots,y_m$ over $K$. 
Two fields 
$K$ and $K^\prime$ are called {\it stably $k$-isomorphic} if 
$K(y_1,\ldots,y_m)\simeq K^\prime(z_1,\ldots,z_n)$ over $k$ 
for some algebraically independent elements $y_1,\ldots,y_m$ over $K$ 
and $z_1,\ldots,z_n$ over $K^\prime$. 
When $k$ is an infinite field, 
%%%
$K$ is called {\it retract $k$-rational} 
if there is a $k$-algebra $R$ contained in $K$ such that 
(i) $K$ is the quotient field of $R$, and (ii) 
the identity map $1_R : R\rightarrow R$ factors through a localized 
polynomial ring over $k$, i.e. there is an element $f\in k[x_1,\ldots,x_n]$, 
which is the polynomial ring over $k$, and there are $k$-algebra 
homomorphisms $\varphi : R\rightarrow k[x_1,\ldots,x_n][1/f]$ 
and $\psi : k[x_1,\ldots,x_n][1/f]\rightarrow R$ satisfying 
$\psi\circ\varphi=1_R$ (cf. \cite{Sal84}). 
$K$ is called {\it $k$-unirational} 
if $k\subset K\subset k(x_1,\ldots,x_n)$ for some integer $n$. 
It is not difficult to see that 
``$k$-rational'' $\Rightarrow$ ``stably $k$-rational'' $\Rightarrow$ 
``retract $k$-rational'' $\Rightarrow$ ``$k$-unirational''. 

Let $L$ be a finite Galois extension of $k$ and $G={\rm Gal}(L/k)$ 
be the Galois group of the extension $L/k$. 
Let $M=\bigoplus_{1\leq i\leq n}\bZ\cdot u_i$ be a $G$-lattice with 
a $\bZ$-basis $\{u_1,\ldots,u_n\}$, 
i.e. finitely generated $\bZ[G]$-module 
which is $\bZ$-free as an abelian group. 
Let $G$ act on the rational function field $L(x_1,\ldots,x_n)$ 
over $L$ with $n$ variables $x_1,\ldots,x_n$ by 
\begin{align}
\sigma(x_i)=\prod_{j=1}^n x_j^{a_{i,j}},\quad 1\leq i\leq n\label{acts}
\end{align}
for any $\sigma\in G$, when $\sigma (u_i)=\sum_{j=1}^n a_{i,j} u_j$, 
$a_{i,j}\in\bZ$. 
The field $L(x_1,\ldots,x_n)$ with this action of $G$ will be denoted 
by $L(M)$.
%An algebraic torus $T$ over $k$ is a group $k$-scheme 
%such that $T\otimes_k \overline{k}\simeq \bG_{m,\overline{k}}^n$. 
There is the duality between the category of $G$-lattices 
and the category of algebraic $k$-tori which split over $L$ 
(see \cite[Section 1.2]{Ono61}, \cite[page 27, Example 6]{Vos98}). 
%A torus $T$ corresponds in this duality to the dual $X(T)^\circ$ 
%of the character group $X(T)={\rm Hom}(T,\bG_m)$. 
In fact, if $T$ is an algebraic $k$-torus, then the character 
group $X(T)={\rm Hom}(T,\bG_m)$ of $T$ may be regarded as a $G$-lattice. 
Conversely, for a given $G$-lattice $M$, there exists an algebraic $k$-torus 
$T$ which splits over $L$ such that $X(T)$ is isomorphic to $M$ as a $G$-lattice. 

The invariant field $L(M)^G$ of $L(M)$ under the action of $G$ 
may be identified with the function field of the algebraic $k$-torus $T$. 
Note that the field $L(M)^G$ is always $k$-unirational 
(see \cite[page 40, Example 21]{Vos98}).
Tori of dimension $n$ over $k$ correspond bijectively 
to the elements of the set $H^1(\mathcal{G},\GL_n(\bZ))$ 
where $\mathcal{G}={\rm Gal}(k_{\rm s}/k)$ since 
${\rm Aut}(\bG_m^n)=\GL_n(\bZ)$. 
The $k$-torus $T$ of dimension $n$ is determined uniquely by the integral 
representation $h : \mathcal{G}\rightarrow \GL_n(\bZ)$ up to conjugacy, 
and the group $h(\mathcal{G})$ is a finite subgroup of $\GL_n(\bZ)$ 
(see \cite[page 57, Section 4.9]{Vos98})). 

Let $K/k$ be a separable field extension of degree $n$ 
and $L/k$ be the Galois closure of $K/k$. 
Let $G={\rm Gal}(L/k)$ and $H={\rm Gal}(L/K)$ with $[G:H]=n$. 
The Galois group $G$ may be regarded as a transitive subgroup of 
the symmetric group $S_n$ of degree $n$. 
%%%
We may assume that 
$H$ is the stabilizer of one of the letters in $G$, 
i.e. $L=k(\theta_1,\ldots,\theta_n)$ and $K=L^H=k(\theta_i)$ 
where $1\leq i\leq n$. 

%%%
Let $R^{(1)}_{K/k}(\bG_m)$ be the norm one torus of $K/k$,
i.e. the kernel of the norm map $R_{K/k}(\bG_m)\rightarrow \bG_m$ where 
$R_{K/k}$ is the Weil restriction (see \cite[page 37, Section 3.12]{Vos98}). 
The norm one torus $R^{(1)}_{K/k}(\bG_m)$ has the 
Chevalley module $J_{G/H}$ as its character module 
and the field $L(J_{G/H})^G$ as its function field 
where $J_{G/H}=(I_{G/H})^\circ={\rm Hom}_\bZ(I_{G/H},\bZ)$ 
is the dual lattice of $I_{G/H}={\rm Ker}\ \varepsilon$ and 
$\varepsilon : \bZ[G/H]\rightarrow \bZ$ is the augmentation map 
(see \cite[Section 4.8]{Vos98}). 
We have the exact sequence $0\rightarrow \bZ\rightarrow \bZ[G/H]
\rightarrow J_{G/H}\rightarrow 0$ and rank $J_{G/H}=n-1$. 
Write $J_{G/H}=\oplus_{1\leq i\leq n-1}\bZ x_i$. 
Then the action of $G$ on $L(J_{G/H})=L(x_1,\ldots,x_{n-1})$ is 
of the form %nothing but 
(\ref{acts}). 

%%%%
Let $T=R^{(1)}_{K/k}(\bG_m)$ be the norm one torus defined by $K/k$. 
Let $S_n$ (resp. $A_n$, $D_n$, $C_n$) be the symmetric 
(resp. the alternating, the dihedral, the cyclic) group 
of degree $n$ of order $n!$ (resp. $n!/2$, $2n$, $n$).% $n$).

The rationality problem for norm one tori is investigated 
by \cite{EM75}, \cite{CTS77}, \cite{Hur84}, \cite{CTS87}, 
\cite{LeB95}, \cite{CK00}, \cite{LL00}, \cite{Flo}, \cite{End11} 
and \cite{HY17}. 
\begin{theorem}[{Endo and Miyata \cite[Theorem 1.5]{EM75}, Saltman \cite[Theorem 3.14]{Sal84}}]\label{th13-1}
Let $K/k$ be a finite Galois field extension and $G={\rm Gal}(K/k)$. 
Then the following conditions are equivalent:\\
{\rm (i)} $R^{(1)}_{K/k}(\bG_m)$ is retract $k$-rational;\\
{\rm (ii)} all the Sylow subgroups of $G$ are cyclic. 
\end{theorem}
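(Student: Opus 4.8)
The plan is to translate the rationality statement into a purely lattice-theoretic one and then solve the resulting group-theoretic problem by localizing at Sylow subgroups. Since $K/k$ is Galois we have $H=1$, so the character lattice of $T=R^{(1)}_{K/k}(\bG_m)$ is $J_G=J_{G/1}=\mathrm{Hom}_\bZ(I_G,\bZ)$, sitting in $0\to\bZ\to\bZ[G]\to J_G\to 0$. I would invoke the flasque (flabby) resolution machinery of Endo--Miyata and Colliot-Th\'el\`ene--Sansuc together with Saltman's criterion, which is exactly the content underlying Theorem~\ref{th13-1}: choosing a flabby resolution $0\to J_G\to P\to F\to 0$ with $P$ permutation and $F$ flabby, the torus $T$ is retract $k$-rational if and only if the flabby class $[J_G]^{\mathrm{fl}}=[F]$ is \emph{invertible}, i.e.\ $F$ is a direct summand of a permutation lattice. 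Thus the theorem reduces to the assertion that $[J_G]^{\mathrm{fl}}$ is invertible if and only if every Sylow subgroup of $G$ is cyclic.

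The second step is a Sylow-localization of invertibility. Since invertibility of a flabby class can be detected after restriction to the Sylow subgroups (using that $[G:G_p]$ is prime to $p$ and the relation $\mathrm{cor}\circ\mathrm{Res}=[G:G_p]$ on the relevant cohomology), and since $\mathrm{Res}_{G_p}$ is exact and carries permutation lattices to permutation lattices, it commutes with formation of the flabby class. I would then compute $\mathrm{Res}_{G_p}J_G$ explicitly: writing $n=[G:G_p]$ and restricting the defining sequence gives $0\to\bZ\xrightarrow{(N_{G_p},\dots,N_{G_p})}\bZ[G_p]^{\,n}\to \mathrm{Res}_{G_p}J_G\to 0$, and a unimodular change of coordinates $(x_1,\dots,x_n)\mapsto(x_1,x_2-x_1,\dots,x_n-x_1)$ moves the diagonal norm element into a single factor, yielding
\[
\mathrm{Res}_{G_p}J_G\;\cong\;J_{G_p}\oplus\bZ[G_p]^{\,n-1}.
\]
Hence $[J_G]^{\mathrm{fl}}\big|_{G_p}=[J_{G_p}]^{\mathrm{fl}}$, and $[J_G]^{\mathrm{fl}}$ is invertible if and only if $[J_{G_p}]^{\mathrm{fl}}$ is invertible for every prime $p$. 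This cleanly reduces the whole problem to the case of a $p$-group $P$: I must show $[J_P]^{\mathrm{fl}}$ is invertible if and only if $P$ is cyclic.

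For the ``if'' direction in the $p$-group case, I would use that the norm one torus of a cyclic extension is $k$-rational (classical), so $[J_{C_{p^m}}]^{\mathrm{fl}}=0$ and in particular is invertible; combined with the localization this already gives retract rationality whenever all Sylow subgroups are cyclic. For the ``only if'' direction, let $P$ be a non-cyclic $p$-group. Then $P/\Phi(P)\cong(\bZ/p)^r$ with $r\geq 2$, so $P$ admits a quotient $\pi\colon P\twoheadrightarrow E$ with $E\cong(\bZ/p)^2$; note this uses quotients rather than subgroups, which is essential since, e.g., $Q_8$ is non-cyclic yet contains no copy of $(\bZ/2)^2$. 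Inflation along $\pi$ embeds $J_E$ into $J_P$ (dually to $I_P\twoheadrightarrow I_E$), and I would establish a functoriality lemma showing that invertibility of $[J_P]^{\mathrm{fl}}$ forces invertibility of $[J_E]^{\mathrm{fl}}$, so it suffices to treat $E=(\bZ/p)^2$.

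The crux, and the step I expect to be the main obstacle, is to show that $[J_E]^{\mathrm{fl}}$ is \emph{not} invertible for $E=(\bZ/p)^2$. The difficulty is precisely that flabby plus coflabby does not imply invertible, so I cannot rely on a naive cohomological vanishing; the trivial lattice $\bZ$ is itself flabby and coflabby over $E$ yet not invertible, which signals where the genuine content lies. The plan is to use that any invertible lattice is both flabby and coflabby, so a flabby representative $F$ of an invertible class satisfies $\widehat{H}^{1}(D,F)=0$ for all subgroups $D\le E$; moreover, since adding a permutation lattice changes neither $\widehat{H}^{-1}$ nor $\widehat{H}^{1}$, the groups $\widehat{H}^{1}(D,F)$ are well-defined invariants of the class $[J_E]^{\mathrm{fl}}$. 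I would then compute these invariants for $J_E$ by exploiting that $\bZ[E]$ is cohomologically trivial under restriction, giving the shift $\widehat{H}^{i}(D,J_E)\cong\widehat{H}^{\,i+1}(D,\bZ)$, and track this through the flabby resolution to exhibit a nonzero value of $\widehat{H}^{1}(D,F)$ coming from the nontrivial integral cohomology of $(\bZ/p)^2$. A nonzero such invariant contradicts coflabbiness of any invertible representative, proving $[J_E]^{\mathrm{fl}}$ non-invertible and completing the chain. Carrying out this cohomological bookkeeping so that the obstruction survives passage through the resolution is the delicate part of the argument.
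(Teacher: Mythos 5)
Your opening reduction coincides with how the paper itself handles this statement: the paper gives no proof of Theorem~\ref{th13-1}, importing it from \cite{EM75} and \cite{Sal84}, and its Section~\ref{sePre} records precisely your first step --- Saltman's criterion (Theorem~\ref{thEM73}\,(iii)) converts the theorem into the purely lattice-theoretic assertion that $[J_G]^{fl}$ is invertible if and only if all Sylow subgroups of $G$ are cyclic (the unnumbered theorem quoting \cite[Theorem 1.5]{EM75}). Within your sketch, the Sylow localization is sound: the computation $\mathrm{Res}_{G_p}J_G\cong J_{G_p}\oplus\bZ[G_p]^{[G:G_p]-1}$ is correct, restriction commutes with formation of flabby classes, and the converse local--global principle you invoke is provable: write $0\to C\to Q\to F\to 0$ with $Q$ permutation and $C$ coflabby (dualize a flabby resolution of $F^{\circ}$), kill the restricted extension classes by Lemma~\ref{lemSL}\,(ii), and kill the global class using $\mathrm{cor}\circ\mathrm{res}=[G:G_p]$ together with the fact that $\mathrm{Ext}^1_G(F,C)$ is $|G|$-torsion and $\gcd\{[G:G_p] : p\mid |G|\}=1$. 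Your $(\bZ/p)^2$ computation is likewise the standard obstruction: $H^1(D,-)$ vanishes on permutation lattices, invertible lattices are coflabby (Lemma~\ref{lemSL}\,(i)), and $H^1(E,[J_E]^{fl})\cong H^3(E,\bZ)\cong\bZ/p\neq 0$, the isomorphism being exactly the one recorded in the Remark following Theorem~\ref{thCTS77}. (A minor slip: what is classical for cyclic extensions is \emph{stable} $k$-rationality, i.e.\ $[J_{C_n}]^{fl}=0$, cf.\ Theorem~\ref{th13-2}; plain rationality is not known in general, but stable rationality is all you need.)

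The genuine gap is the ``functoriality lemma'' you postulate but do not prove: that invertibility of $[J_P]^{fl}$ descends to quotients $\pi\colon P\twoheadrightarrow E$. This is not a routine step; it is the entire hard content of the theorem in disguise. Specialized to $P=Q_8\twoheadrightarrow(\bZ/2)^2$ and contraposed, it asserts that $[J_{Q_8}]^{fl}$ is not invertible --- exactly the generalized quaternion case, which is the one case your cohomological obstruction provably cannot reach: every abelian subgroup of $Q_{2^n}$ is cyclic, so by Theorem~\ref{thCTS77} $[J_{Q_{2^n}}]^{fl}$ is \emph{coflabby} and $H^3(H,\bZ)=0$ for every subgroup $H$; hence the invariants $\widehat H^1(D,F)$ you propose to track vanish identically on quaternion groups, and no transport of the $(\bZ/p)^2$ computation through invariants of this type can close the argument. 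The natural attempts to prove the lemma also fail: inflation goes the wrong way ($J_E$ inflates to a sublattice of $J_P$, and Swan's Lemma~\ref{lemSwa} requires the \emph{cokernel} to be invertible), while the invariants functor $(-)^N$ with $N=\ker\pi$, which does send $J_P$ to $J_E$ and permutation lattices to permutation lattices, is not exact on a flabby resolution --- the obstruction is $H^1(N,J_P)\cong H^2(N,\bZ)\neq 0$. This is precisely where Endo--Miyata (and Saltman) need an argument of a different nature, resting on the integral representation theory of quaternion groups rather than on Tate-cohomology vanishing. To repair your proof, either establish non-invertibility of $[J_{Q_8}]^{fl}$ directly --- then pure subgroup reduction suffices, since a noncyclic $p$-group contains $C_p\times C_p$ unless it is generalized quaternion, and $Q_{2^n}\geq Q_8$, with non-invertibility propagating upward via $\mathrm{Res}_H J_G\cong J_H\oplus(\textrm{free})$ as in Lemma~\ref{lemp3}\,(ii) --- or supply an actual proof of the quotient lemma. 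As written, your argument proves the theorem only for groups whose $2$-Sylow subgroups are not generalized quaternion.
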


\begin{theorem}[{Endo and Miyata \cite[Theorem 2.3]{EM75}, Colliot-Th\'{e}l\`{e}ne and Sansuc \cite[Proposition 3]{CTS77}}]\label{th13-2}
Let $K/k$ be a finite Galois field extension and $G={\rm Gal}(K/k)$. 
Then the following conditions are equivalent:\\
{\rm (i)} $R^{(1)}_{K/k}(\bG_m)$ is stably $k$-rational;\\
{\rm (ii)} all the Sylow subgroups of $G$ are cyclic and $H^4(G,\bZ)\simeq \widehat H^0(G,\bZ)$ 
where $\widehat H$ is the Tate cohomology;\\
{\rm (iii)} $G=C_m$ or $G=C_n\times \langle\sigma,\tau\mid\sigma^k=\tau^{2^d}=1,
\tau\sigma\tau^{-1}=\sigma^{-1}\rangle$ where $d\geq 1, k\geq 3$, 
$n,k$: odd, and ${\rm gcd}\{n,k\}=1$;\\
{\rm (iv)} $G=\langle s,t\mid s^m=t^{2^d}=1, tst^{-1}=s^r, m: odd,\ 
r^2\equiv 1\pmod{m}\rangle$.
\end{theorem}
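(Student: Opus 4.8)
The plan is to route everything through the flabby (flasque) class $[J_G]^{fl}$ of the character lattice $J_G:=(I_{G/1})^\circ$, where $I_G=\ker(\varepsilon\colon\bZ[G]\to\bZ)$ and $H=1$ because $K/k$ is Galois; the function field of $R^{(1)}_{K/k}(\bG_m)$ is then $L(J_G)^G$. By the standard dictionary between rationality properties of $L(M)^G$ and the flabby class of $M$ (Voskresenskii \cite{Vos98}; Endo--Miyata \cite{EM75}; Colliot-Th\'el\`ene--Sansuc \cite{CTS77}), $L(J_G)^G$ is stably $k$-rational if and only if $[J_G]^{fl}=0$, i.e. the flabby part $F$ of a flabby resolution $0\to J_G\to P\to F\to 0$ is stably permutation. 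Thus condition (i) is equivalent to $[J_G]^{fl}=0$, and the whole problem becomes the computation of this single class.

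The basic tool will be dimension shifting. Applying $\widehat H^{\ast}(H,-)$ to the exact sequence $0\to\bZ\to\bZ[G]\to J_G\to 0$ and using that $\bZ[G]$ is free, hence cohomologically trivial, over every subgroup $H\le G$ gives $\widehat H^i(H,J_G)\cong\widehat H^{i+1}(H,\bZ)$ for all $i\in\bZ$ and all $H\le G$. In particular $\widehat H^{-1}(H,J_G)\cong\widehat H^0(H,\bZ)=\bZ/|H|$, so $J_G$ is itself never flabby and one genuinely must resolve. Since $[J_G]^{fl}=0$ is in particular invertible, Theorem~\ref{th13-1} already forces every Sylow subgroup of $G$ to be cyclic; this gives half of (i)$\Rightarrow$(ii) for free and lets me assume henceforth that $G$ has cyclic Sylow subgroups, so that $G$ has periodic cohomology and $J_G$ is an invertible lattice.

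The heart of the argument is then to show, for such $G$, that $[J_G]^{fl}=0$ if and only if $H^4(G,\bZ)\cong\widehat H^0(G,\bZ)$. Here $\widehat H^0(G,\bZ)=\bZ/|G|$, while by periodicity $H^4(G,\bZ)=\widehat H^4(G,\bZ)$ is cyclic of order dividing $|G|$; the isomorphism holds exactly when $4$ is a period of $G$, i.e. when the cohomological period divides $4$. I expect this to be the main obstacle. One must pin down the precise obstruction to the invertible lattice $F$ being stably permutation and identify it, through the shift $\widehat H^i(H,J_G)\cong\widehat H^{i+1}(H,\bZ)$, with the degree-$4$ class comparing $\widehat H^4(G,\bZ)$ and $\widehat H^0(G,\bZ)$. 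Concretely, I would use the structure theory of invertible lattices over groups with cyclic Sylow subgroups (Endo--Miyata \cite{EM75}) to reduce $[J_G]^{fl}$ to a single global class, and then trace that class along the dimension shifts to see that its vanishing is governed by whether $\widehat H^0$ propagates to $\widehat H^4$; the vanishing of $[J_G]^{fl}$ is thereby equivalent to ``period divides $4$,'' which establishes (i)$\Leftrightarrow$(ii).

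It remains to translate (ii) into the explicit group lists, which is group theory. Groups all of whose Sylow subgroups are cyclic are exactly the metacyclic $\bZ$-groups $G=\langle s,t\mid s^m=t^e=1,\ tst^{-1}=s^r\rangle$ with $\gcd(m,e(r-1))=1$ and $r^e\equiv1\pmod m$ (H\"older--Burnside--Zassenhaus). Computing the cohomological period of such a group and imposing that it divide $4$ forces the complement $\langle t\rangle$ to be a $2$-group acting through an involution, i.e. $e=2^d$ and $r^2\equiv1\pmod m$ with $m$ odd; this is exactly presentation (iv), the purely cyclic groups $C_m$ arising as the cases with $r\equiv1$. To pass from (iv) to (iii) I would apply the Chinese Remainder Theorem to $C_m=\langle s\rangle$: since $r^2\equiv1\pmod m$ and $m$ is odd, $\langle s\rangle$ splits as $C_n\times C_k$, where $t$ acts trivially on $C_n$ and by inversion on $C_k$; the factor $C_n$ becomes central, giving $G=C_n\times\langle\si,\ta\mid\si^k=\ta^{2^d}=1,\ \ta\si\ta^{-1}=\si^{-1}\rangle$ with $n,k$ odd and $\gcd(n,k)=1$, which is (iii) (the case $k=1$ returning $G=C_m$). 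The implication (iii)$\Rightarrow$(iv) is the same computation read in reverse.
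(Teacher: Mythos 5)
Your framing is correct and matches the standard route (and note that the paper itself offers no proof of this theorem---it is quoted from \cite{EM75} and \cite{CTS77}, so the only comparison possible is with those sources): you correctly reduce (i) to $[J_G]^{fl}=0$ via Theorem~\ref{thEM73}, correctly get ``Sylow subgroups cyclic'' from Theorem~\ref{th13-1}, and your Zassenhaus-plus-CRT translation between (iii) and (iv), with $m=nk$ split according to whether $r\equiv 1$ or $r\equiv -1$ modulo each prime power, is essentially right. But the heart of the theorem---that for $G$ with cyclic Sylow subgroups one has $[J_G]^{fl}=0$ if and only if the cohomological period divides $4$---is exactly where you stop proving and start hoping, and the mechanism you propose cannot work as stated. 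Dimension shifting along $0\to\bZ\to\bZ[G]\to J_G\to 0$ determines the Tate cohomology $\widehat H^i(H,J_G)\cong\widehat H^{i+1}(H,\bZ)$ \emph{identically} for every such $G$; in particular it is blind to the difference between ``$[J_G]^{fl}$ invertible'' and ``$[J_G]^{fl}=0$''. Concretely, $F_{20}=C_5\rtimes C_4$ and $C_7\rtimes C_3$ have all Sylow subgroups cyclic, so the same shifts apply, yet $[J_G]^{fl}$ is invertible and nonzero there, while it vanishes for $D_n$ ($n$ odd); no amount of ``tracing the class along the dimension shifts'' distinguishes these cases, because the separating invariant lives in the finite group $\cH^{-1}(G)/\cS(G)$ and its computation in \cite{EM75} (see also \cite{EK17}) requires genuine integral representation theory of $\bZ[G]$---projective class groups, Swan modules, and for the sufficiency direction an explicit construction showing the flabby class of $J_G$ is stably permutation for each group in list (iv). Neither direction of (i)$\Leftrightarrow$(ii) is actually carried out in your proposal; your own sentence ``I expect this to be the main obstacle'' concedes as much.

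Two secondary points. First, your claim that once the Sylow subgroups are cyclic ``$J_G$ is an invertible lattice'' contradicts your own (correct) computation $\widehat H^{-1}(H,J_G)\cong\widehat H^0(H,\bZ)=\bZ/|H|\neq 0$: invertible lattices are flabby by Lemma~\ref{lemSL}, so $J_G$ is never invertible for $G\neq 1$; what is invertible is the class $[J_G]^{fl}$ (Theorem~\ref{th13-1} via Saltman's criterion). Second, the passage from (ii) to (iv) silently uses two facts that should be cited rather than asserted: the Artin--Tate/Cartan--Eilenberg characterization that, for a group with periodic cohomology, $\widehat H^n(G,\bZ)\simeq\bZ/|G|$ holds exactly when $n$ is a multiple of the period, and Swan's theorem that for odd $p$ the $p$-period equals $2\,|N_G(P)/C_G(P)|$ for $P$ a (cyclic) $p$-Sylow subgroup; with these in hand your deduction that the period divides $4$ forces $r^2\equiv 1\pmod m$ and a $2$-power complement (after absorbing the odd central part of $\langle t\rangle$ into $\langle s\rangle$) is fine. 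So the group-theoretic periphery is sound, but the proposal has a genuine hole at the single step that constitutes the content of Endo--Miyata's and Colliot-Th\'el\`ene--Sansuc's theorem.
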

\begin{theorem}[Endo {\cite[Theorem 2.1]{End11}}]\label{th14}
Let $K/k$ be a finite non-Galois, separable field extension 
and $L/k$ be the Galois closure of $K/k$. 
Assume that the Galois group of $L/k$ is nilpotent. 
Then the norm one torus $R^{(1)}_{K/k}(\bG_m)$ is not 
retract $k$-rational.
\end{theorem}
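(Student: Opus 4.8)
The plan is to translate everything into flabby classes and then localize at one prime. Write $T=R^{(1)}_{K/k}(\bG_m)$, so that the function field of $T$ is $L(J_{G/H})^G$. By Saltman's criterion (the one underlying Theorem~\ref{th13-1}), $T$ is retract $k$-rational if and only if the flabby class $[J_{G/H}]^{fl}$—read off from any flabby resolution $0\to J_{G/H}\to E\to (J_{G/H})^{fl}\to 0$ with $E$ a permutation lattice—is \emph{invertible}, i.e. a direct summand of a permutation $G$-lattice. Thus it suffices to show that $[J_{G/H}]^{fl}$ is not invertible. The property I will use is that invertibility passes to subgroups: restriction of a permutation lattice is a permutation lattice, hence restriction of a summand of one is again such a summand; moreover restriction carries a flabby resolution to a flabby resolution, so for every $G'\le G$ one has $\mathrm{Res}^G_{G'}[J_{G/H}]^{fl}=[\mathrm{Res}^G_{G'}J_{G/H}]^{fl}$.

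First I would use nilpotency to cut down to a single prime. As $G$ is nilpotent, $G=\prod_p G_p$ is the direct product of its (normal) Sylow subgroups, and $H$, being nilpotent, splits as $H=\prod_p H_p$ with $H_p=H\cap G_p\le G_p$. Since $L$ is the Galois closure, $H$ is core-free in $G$; as the core of $H$ in $G$ is the product of the cores of the $H_p$ in $G_p$, each $H_p$ is core-free in $G_p$. Because $K/k$ is non-Galois, $H\ne 1$, so I may fix a prime $p$ with $H_p\ne 1$. A nontrivial core-free subgroup of a $p$-group is non-normal, so $G_p$ is non-abelian, in particular non-cyclic. The splitting yields an isomorphism of $G$-sets $G/H\cong (G_p/H_p)\times(G_{p'}/H_{p'})$, so restricting to $G_p$ gives $\mathrm{Res}^G_{G_p}\bZ[G/H]\cong \bZ[G_p/H_p]^{\oplus m}$ with $m=[G_{p'}:H_{p'}]$. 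Restricting the defining sequence $0\to\bZ\to\bZ[G/H]\to J_{G/H}\to 0$ and comparing the diagonal norm map $\bZ\to\bZ[G_p/H_p]^{\oplus m}$ with the componentwise one, a snake-lemma computation produces $0\to\bZ^{m-1}\to \mathrm{Res}^G_{G_p}J_{G/H}\to (J_{G_p/H_p})^{\oplus m}\to 0$ with $\bZ^{m-1}$ a trivial (hence permutation) lattice. Additivity of the flabby class then gives $\mathrm{Res}^G_{G_p}[J_{G/H}]^{fl}=m\,[J_{G_p/H_p}]^{fl}$, and since $J_{G_p/H_p}^{fl}$ is a summand of $(J_{G_p/H_p}^{fl})^{\oplus m}$, invertibility of $[J_{G/H}]^{fl}$ would force invertibility of $[J_{G_p/H_p}]^{fl}$.

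It remains to prove the purely group-theoretic statement: for a $p$-group $P$ with a nontrivial core-free subgroup $Q$, the class $[J_{P/Q}]^{fl}$ is not invertible. This is the heart of the argument and the step I expect to be the main obstacle. Such a $P$ is neither cyclic nor generalized quaternion, so it contains an elementary abelian subgroup $E\cong C_p\times C_p$, and the natural idea is to locate inside $\mathrm{Res}^P_E[J_{P/Q}]^{fl}$ the Galois class $[J_{C_p\times C_p}]^{fl}$, which is non-invertible—indeed not even coflabby—by Theorem~\ref{th13-1}. The difficulty is that $E$ need not act on $P/Q$ with a free orbit: already for $P=D_4$ and $Q=\langle s\rangle$, every $C_2\times C_2$ splits $P/Q$ into two orbits of size two, so one cannot simply peel off a copy of $J_{C_p\times C_p}$ as a direct summand. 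I would instead decompose $\mathrm{Res}^P_E\bZ[P/Q]=\bigoplus_i\bZ[E/E_i]$ by $E$-orbits and either compute the flabby class of this ``multi-norm'' lattice outright, or detect the obstruction cohomologically by exhibiting $H^1(E',(J_{P/Q})^{fl})\ne 0$ for a suitable $E'\le E$—since invertible lattices are coflabby, any such nonvanishing rules out invertibility. Carrying the nonvanishing of the $C_p\times C_p$ obstruction through the nontrivial stabilizers $E_i$ is exactly where the work concentrates; once it is done, the restriction principle of the first paragraph propagates the non-invertibility back up to $[J_{G/H}]^{fl}$ and hence shows that $R^{(1)}_{K/k}(\bG_m)$ is not retract $k$-rational.
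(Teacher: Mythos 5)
This theorem is quoted by the paper from Endo \cite[Theorem 2.1]{End11} without proof, so your proposal must be measured against Endo's argument, whose first step (restriction to a Sylow subgroup) you reproduce in outline. Your general principles are right --- retract rationality is equivalent to invertibility of $[J_{G/H}]^{fl}$, and invertibility passes to subgroups with $\mathrm{Res}^G_{G'}[J_{G/H}]^{fl}=[\mathrm{Res}^G_{G'}J_{G/H}]^{fl}$ (cf.\ Lemma \ref{lemp3} (ii)) --- and your nilpotency bookkeeping ($H=\prod_p H_p$, cores multiply, some $H_p\neq 1$ core-free, $G_p$ non-cyclic) is correct, as is your snake-lemma sequence. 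But the ``additivity'' inference drawn from it is wrong: Swan's Lemma \ref{lemSwa} is additive only when the \emph{quotient} is invertible, whereas in your sequence $0\to\bZ^{m-1}\to\mathrm{Res}^G_{G_p}J_{G/H}\to(J_{G_p/H_p})^{\oplus m}\to 0$ the quotient is the very lattice at issue. The principle ``permutation sublattice $\Rightarrow$ flabby classes of middle and quotient terms agree'' is false: applied to $0\to\bZ\to\bZ[G]\to J_G\to 0$ it would give $[J_G]^{fl}=0$ for every finite $G$, contradicting Theorem \ref{th13-2}. Indeed your formula $\mathrm{Res}^G_{G_p}[J_{G/H}]^{fl}=m\,[J_{G_p/H_p}]^{fl}$ is itself wrong for $m\geq 2$; the correct multiplicity is one. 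The repair: the sum map $\bigoplus^{m}\bZ[G_p/H_p]\to\bZ[G_p/H_p]$ restricts to a surjection on augmentation kernels with kernel $\bZ[G_p/H_p]^{\oplus(m-1)}$, and dualizing gives $0\to J_{G_p/H_p}\to\mathrm{Res}^G_{G_p}J_{G/H}\to\bZ[G_p/H_p]^{\oplus(m-1)}\to 0$, to which Lemma \ref{lemSwa} does apply; hence $\mathrm{Res}^G_{G_p}[J_{G/H}]^{fl}=[J_{G_p/H_p}]^{fl}$ and the reduction to $p$-groups is sound.

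The genuine gap is that the $p$-group case --- the entire content of Endo's theorem --- is never proved: your final paragraph is an announced plan, not an argument, and you say so yourself. Note also that your diagnosis of where the difficulty sits is off: for $P=D_4=\langle r,s\rangle$, $Q=\langle s\rangle$, the conjugates of $s$ are only $s$ and $r^2s$, so the Klein subgroup $\langle r^2,rs\rangle$ acts \emph{simply transitively} on $P/Q$ and one peels off $J_{C_2\times C_2}$ immediately; your claim that every Klein subgroup gives two orbits of size two is false there. The real obstruction occurs, for instance, for $P=M_{16}=\langle r,s\mid r^8=s^2=1,\ srs^{-1}=r^5\rangle$, $Q=\langle s\rangle$ (the pair underlying $8T7$ in Theorem \ref{thmain2}): the only involutions are $r^4$, $s$, $r^4s$, so $P$ has a unique Klein subgroup $E=\{1,r^4,s,r^4s\}$, every $E$-orbit on $P/Q$ has length two, and $\mathrm{Res}_E\bZ[P/Q]\cong\bZ[E/\langle s\rangle]^{\oplus 2}\oplus\bZ[E/\langle r^4s\rangle]^{\oplus 2}$; no copy of $J_E$ splits off, and one must genuinely compute the flabby class of this multinorm-type lattice or a cohomological obstruction such as $H^1(E',\cdot)\neq 0$ --- which, by Theorem \ref{thEM82}, for odd $p$ need not even detect non-invertibility, since flabby coflabby $C_p\times C_p$-lattices can be non-invertible. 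This is exactly where the work lies in \cite{End11}: Endo handles it not by orbit-peeling but, roughly, by induction through suitable subquotients $G'/N$ (with $N\lhd G'\leq G$, $N\leq H\cap G'$), using that invertibility descends both to subgroups and, via $N$-fixed points, through such quotients, with the Galois case (Theorem \ref{th13-1}) as base. As it stands, your proposal delivers only the (repairable) Sylow reduction and leaves the theorem open.
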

\begin{theorem}[Endo {\cite[Theorem 3.1]{End11}}]\label{th15}
Let $K/k$ be a finite non-Galois, separable field extension 
and $L/k$ be the Galois closure of $K/k$. 
Let $G={\rm Gal}(L/k)$ and $H={\rm Gal}(L/K)\leq G$. 
Assume that all the Sylow subgroups of $G$ are cyclic. 
Then the norm one torus $R^{(1)}_{K/k}(\bG_m)$ is retract $k$-rational, 
and the following conditions are equivalent:\\
{\rm (i)} 
$R^{(1)}_{K/k}(\bG_m)$ is stably $k$-rational;\\
{\rm (ii)} 
$G=D_n$ with $n$ odd $(n\geq 3)$ 
or $G=C_m\times D_n$ where $m,n$ are odd, 
$m,n\geq 3$, ${\rm gcd}\{m,n\}=1$, and $H\leq D_n$ is of order $2$;\\
{\rm (iii)} 
$H=C_2$ and $G\simeq C_r\rtimes H$, $r\geq 3$ odd, where 
$H$ acts non-trivially on $C_r$. 
\end{theorem}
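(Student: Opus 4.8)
The plan is to translate both the retract and the stable rationality of $R^{(1)}_{K/k}(\bG_m)$ into statements about the flabby class $[J_{G/H}]^{fl}$ of its character lattice, and then to read off the answer from the metacyclic structure of $G$. Recall the standard dictionary (Endo--Miyata \cite{EM75}, Colliot-Th\'el\`ene--Sansuc \cite{CTS77}, Saltman \cite{Sal84}): if $0\to J_{G/H}\to P\to F\to 0$ is a flabby resolution with $P$ permutation and $F$ flabby, then $L(J_{G/H})^G$ is stably $k$-rational iff $[F]=[J_{G/H}]^{fl}=0$, and retract $k$-rational iff $[F]$ is invertible. For the retract statement I would invoke the Endo--Miyata theorem that, when every Sylow subgroup of $G$ is cyclic, every flabby $G$-lattice is invertible; applying this to $F$ gives that $[J_{G/H}]^{fl}$ is invertible, hence $R^{(1)}_{K/k}(\bG_m)$ is retract $k$-rational.

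Next I would pin down the group theory. A finite group all of whose Sylow subgroups are cyclic is metacyclic, so $G=\langle s,t\mid s^m=t^n=1,\ tst^{-1}=s^r\rangle$ with $\gcd(m,n)=1$ and $\langle s\rangle\trianglelefteq G$. Since $L$ is the Galois closure of $K$, the subgroup $H$ is core-free; because every subgroup of the cyclic normal subgroup $\langle s\rangle$ is characteristic in $\langle s\rangle$, hence normal in $G$, the intersection $H\cap\langle s\rangle$ lies in the core of $H$ and is therefore trivial. Thus $H$ injects into $G/\langle s\rangle\cong C_n$ and is cyclic of order dividing $n$, and non-Galoisness of $K/k$ forces the relevant action to be non-trivial. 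This already pushes the candidate answer into the shape of (iii): the claim will be that stable rationality holds exactly when $n=|H|=2$, i.e. when $H=\langle t\rangle$ is a complement to $\langle s\rangle$ acting non-trivially.

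The equivalence (ii)$\Leftrightarrow$(iii) is then pure bookkeeping: a non-trivial involution of $C_r$ (for $r$ odd) is, by the Chinese Remainder Theorem, inversion on a factor $C_n$ and the identity on a coprime factor $C_m$, so $C_r\rtimes C_2\cong C_m\times D_n$ with $H\le D_n$ of order $2$, and $m=1$ recovers $G=D_n$. The substantive content is therefore (i)$\Leftrightarrow$(iii). For (iii)$\Rightarrow$(i) I would exhibit an explicit flabby resolution of $J_{G/H}$ when $G=C_r\rtimes C_2$ and $H=C_2$: here $\bZ[G/H]\cong\bZ[C_r]$ carries the extra inversion action of $H$, and using that $r$ is odd one shows $F$ is stably permutation, so $[J_{G/H}]^{fl}=0$ (reducing the general $C_m\times D_n$ case to $D_n$ via the product decomposition just described).

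The hard direction is (i)$\Rightarrow$(iii): I must show that whenever $|H|>2$ or $n>2$ the flabby class $[J_{G/H}]^{fl}$ is non-zero. The strategy is to detect a non-trivial invariant by restriction. For $H'\le G$ one has, by Mackey's formula, $\mathrm{Res}_{H'}\bZ[G/H]\cong\bigoplus_{g}\bZ[H'/(H'\cap{}^{g}\!H)]$ over double-coset representatives $g$, together with $[J_{G/H}]^{fl}|_{H'}=[\mathrm{Res}_{H'}J_{G/H}]^{fl}$; choosing $H'$ adapted to the metacyclic structure I would reduce to a cyclic or dihedral subquotient on which a cohomological obstruction (an appropriate Tate cohomology group $\widehat H^{\,i}$, in the spirit of the invariant $H^4(G,\bZ)\not\simeq\widehat H^0(G,\bZ)$ appearing in Theorem \ref{th13-2}) can be computed and shown not to vanish. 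Verifying non-vanishing uniformly across the two excluded families---$H$ cyclic of order $>2$, and $n>2$ with $H=C_2\le C_n$---is where the real work lies, since it requires controlling the flabby class of $J_{G/H}$ for the full range of metacyclic pairs $(G,H)$ rather than for a single group, and this case analysis is the main obstacle.
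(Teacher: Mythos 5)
You should first note that the paper contains no proof of this theorem: it is imported verbatim from Endo \cite[Theorem 3.1]{End11} and used as a black box (e.g.\ in the proof of Theorem \ref{thmain1} (2)), so your proposal has to be measured against Endo's proof rather than anything in this text. Your opening reductions do match the standard framework that Endo's argument also starts from: the dictionary of Theorem \ref{thEM73}; retract rationality via the Endo--Miyata equivalence that ``all Sylow subgroups cyclic'' means every flabby $G$-lattice is invertible, applied to a flabby resolution of $J_{G/H}$; the Zassenhaus structure $G=\langle s\rangle\rtimes\langle t\rangle\simeq C_m\rtimes C_n$ with $\gcd(m,n)=1$; the observation that core-freeness of $H$ forces $H\cap\langle s\rangle=1$, so $H$ is cyclic of order dividing $n$; and the CRT bookkeeping for (ii)$\Leftrightarrow$(iii). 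All of this is correct.

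As a proof, however, the proposal has a genuine gap in each substantive direction. For (iii)$\Rightarrow$(i) the phrase ``using that $r$ is odd one shows $F$ is stably permutation'' is an assertion of the theorem, not an argument: nothing in the sketch produces the required isomorphism $F\oplus P\simeq P'$, and this step is nontrivial even in small cases (compare the machine-assisted isomorphism of rank $15$ that the present paper has to construct for $D_{10}=10T3$ in Example \ref{ex10}). For (i)$\Rightarrow$(iii) --- which you yourself flag as ``the main obstacle'' --- the proposed detector is too coarse to close the argument: by the retract part, $[J_{G/H}]^{fl}$ is represented by an \emph{invertible} lattice $F$, so $\widehat H^{-1}(H',F)=H^1(H',F)=0$ for every $H'\leq G$ automatically, and the remaining Tate cohomology groups $\widehat H^i(H',F)$ are invariants of the genus of $F$. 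Genus-level data can sometimes obstruct (this is in effect what {\tt PossibilityOfStablyPermutationF} exploits), but for Z-groups the obstruction to $[F]=0$ in general lives in a locally free class group, invisible to all such invariants --- this is exactly why Endo--Miyata's analysis of $\cH^{-1}(G)/\cS(G)$, and Swan's computation of the flabby class group of cyclic groups \cite{Swa10}, pass through class groups of orders rather than cohomology of subgroups. So at minimum you would have to verify, family by family (all $H$ cyclic of order $>2$, and $H=C_2$ inside $n>2$, e.g.\ $F_{42}=C_7\rtimes C_6$ with $H=C_2$), that the genus already obstructs, and supply class-group arguments where it does not; that uniform non-vanishing analysis over all metacyclic pairs $(G,H)$ is the actual content of Endo's theorem, and your plan stops exactly where it begins.
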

\begin{theorem}[{Colliot-Th\'{e}l\`{e}ne and Sansuc \cite[Proposition 9.1]{CTS87}, 
\cite[Theorem 3.1]{LeB95}, 
%\cite{LeB95}, 
\cite[Proposition 0.2]{CK00}, \cite{LL00}, 
Endo \cite[Theorem 4.1]{End11}, see also 
\cite[Remark 4.2 and Theorem 4.3]{End11}}]\label{thS}
Let $K/k$ be a non-Galois separable field extension 
of degree $n$ and $L/k$ be the Galois closure of $K/k$. 
Assume that ${\rm Gal}(L/k)=S_n$, $n\geq 3$, 
and ${\rm Gal}(L/K)=S_{n-1}$ is the stabilizer of one of the letters 
in $S_n$.\\
{\rm (i)}\ 
$R^{(1)}_{K/k}(\bG_m)$ is retract $k$-rational 
if and only if $n$ is a prime number;\\
{\rm (ii)}\ 
$R^{(1)}_{K/k}(\bG_m)$ is $($stably$)$ $k$-rational 
if and only if $n=3$.
\end{theorem}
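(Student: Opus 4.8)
The plan is to translate everything into the language of flabby (flasque) resolutions of $G$-lattices and then to reduce to computations over Sylow subgroups. Writing $G=S_n$, $H=S_{n-1}$ and $M=J_{G/H}$, the defining sequence $0\to\bZ\to\bZ[G/H]\to M\to 0$ identifies $M$ with $\bZ^n/\bZ\cdot\mathbf 1$, the natural degree-$n$ permutation module modulo the all-ones vector. By the criteria of Endo--Miyata, Saltman, and Colliot-Th\'{e}l\`{e}ne--Sansuc, the torus $T=R^{(1)}_{K/k}(\bG_m)$ is retract $k$-rational iff the flabby class $[M]^{fl}$ is invertible (a direct summand of a permutation lattice), and stably $k$-rational iff $[M]^{fl}=0$. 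Since the restriction of a flabby resolution to any subgroup is again a flabby resolution, $[M]^{fl}$ invertible forces $[M|_{G'}]^{fl}$ invertible for every $G'\le G$, and conversely invertibility may be tested after restriction to the Sylow subgroups of $G$.

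For (i), suppose first that $n=p$ is prime, and run through the Sylow subgroups of $S_p$. For a prime $\ell\neq p$, since $v_\ell(p!)=v_\ell((p-1)!)$, a Sylow $\ell$-subgroup $P$ is conjugate into $S_{p-1}=H$ and hence fixes a letter; a short computation then shows $M|_P$ is a permutation $P$-lattice (the hyperplane $x_p=0$ maps isomorphically onto $\bZ^p/\bZ\mathbf 1$), so $[M|_P]^{fl}=0$. For $\ell=p$ the Sylow subgroup is $C_p$ acting by a single $p$-cycle, so $M|_{C_p}=J_{C_p}$; over a cyclic group every flabby lattice is permutation, whence $[M|_{C_p}]^{fl}=0$. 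Thus all Sylow restrictions are stably permutation, $[M]^{fl}$ is invertible, and $T$ is retract $k$-rational.

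For the converse in (i), let $n$ be composite and let $p$ be its smallest prime factor, so $n/p\ge p$ and in particular $n\ge p^2$. I would embed $P=C_p\times C_p$ into $S_n$ fixed-point-freely, using one regular orbit of size $p^2$ together with $(n-p^2)/p$ further orbits of size $p$; then $\bZ^n|_P$ is a permutation lattice with a free summand $\bZ[P]$ and $M|_P=\bZ^n|_P/\bZ\mathbf 1$. The crux is to show that $[M|_P]^{fl}$ is \emph{not} invertible: when $n=p^2$ this is exactly $J_{(C_p\times C_p)/1}$, which fails to be invertible precisely because $C_p\times C_p$ is non-cyclic (Theorem \ref{th13-1}), and for general composite $n$ one reduces to this by peeling off the permutation summands in a flabby resolution and evaluating a cohomological invariant over $C_p\times C_p$. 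Non-invertibility of $[M|_P]^{fl}$ then forces $[M]^{fl}$ non-invertible, so $T$ is not retract $k$-rational. This detection of non-invertibility over $C_p\times C_p$ is the \emph{main obstacle} of the first half.

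For (ii), the easy direction is $n=3$: then $T$ has dimension $n-1=2$, and every two-dimensional torus is $k$-rational (Voskresenskii), so $T$ is $k$-rational. For $n\ge 4$ I must show $T$ is not stably rational, i.e. $[M]^{fl}\neq 0$. If $n$ is composite this is immediate from (i), since $[M]^{fl}$ is not even invertible. If $n=p\ge 5$ is prime, then by part (i) $[M]^{fl}$ is invertible, yet one must prove it is a nonzero invertible class. Here restriction to $C_p$ detects nothing (we saw $[J_{C_p}]^{fl}=0$), so the argument is genuinely finer: following Le Bruyn and Lemire--Lorenz one computes that the relevant $H^1$ of the Picard group, equivalently the flabby class of $J_{S_p/S_{p-1}}$, is nonzero for every prime $p\ge 5$, while it vanishes for $p=3$. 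This last non-vanishing is the second, and more delicate, computational core of the theorem; granting it, $T$ is not stably $k$-rational for all $n\ge 4$, which completes (ii).
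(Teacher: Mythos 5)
This is one of the theorems the paper quotes from the literature (Colliot-Th\'el\`ene--Sansuc, Le Bruyn, Cortella--Kunyavskii, Lemire--Lorenz, Endo) rather than proves, so your sketch must be measured against those proofs and against the machinery the paper itself uses in Section \ref{seProof}. Your framework (Theorem \ref{thEM73}, restriction via Lemma \ref{lemp3}, Sylow-local detection of invertibility) and the forward direction of (i) are sound, modulo one misstatement: over a cyclic group a flabby lattice need not be permutation, nor even stably permutation (Swan \cite{Swa10} computes nontrivial flabby class groups of cyclic groups); what you actually need is only $[J_{C_p}]^{fl}=0$, which is Theorem \ref{th13-2}. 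The genuine error is in your converse of (i). Flabby classes are \emph{not} additive over an extension with permutation sublattice --- the sequence $0\to\bZ\to\bZ[P]\to J_P\to 0$ already refutes this --- so ``peeling off the permutation summands'' is not a legal move, and in fact the peeling runs in the opposite direction to your intuition. For your embedding of $P=C_p\times C_p$ with one regular orbit and small orbits $P/Q_i$, let $\varphi:\bZ\Omega\to\bZ[P]$ be the identity on the regular summand, $xQ_1\mapsto -\sum_{g\in xQ_1}g$ on the first small orbit, and $0$ on the others; then $\varphi(\mathbf{1})=N_P-N_P=0$, so $\varphi$ induces a surjection of $P$-lattices and an exact sequence $0\to(\bigoplus_i\bZ[P/Q_i])/\bZ\mathbf{1}\to M|_P\to\bZ[P]\to 0$. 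By Lemma \ref{lemSwa}, $[M|_P]^{fl}=[(\bigoplus_i\bZ[P/Q_i])/\bZ\mathbf{1}]^{fl}$: the regular orbit is \emph{absorbed}, and the $J_{C_p\times C_p}$ you hoped to reach contributes nothing. Worse, when $n=p^2+p$ your recipe forces a single small orbit, the right-hand lattice is inflated from $P/Q_1\simeq C_p$, and its flabby class is $0$; e.g.\ for $n=6$, $P=V_4$ with orbits $4+2$, your restriction provably detects nothing. Configurations that do detect non-invertibility look different (for $n=6$: three orbits of size $2$ with the three \emph{distinct} order-$2$ stabilizers, the lattice behind the classical biquadratic failure of the Hasse norm principle), and the real proofs (\cite[Proposition 9.1]{CTS87} for $n$ even, Endo \cite[Theorem 4.1]{End11} in general) run through actual cohomological computations of $H^1(\cdot,[M]^{fl})$ as a kernel of restrictions, not a formal reduction to the Galois case $n=p^2$.

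Part (ii) also fails as described for prime $p\geq 5$: by part (i) the class $F=[M]^{fl}$ is invertible, hence flabby and coflabby by Lemma \ref{lemSL} (i), so $H^1(G',F)=0$ for \emph{every} subgroup $G'$; thus ``the relevant $H^1$ of the Picard group'' vanishes identically, is not equivalent to the flabby class, and cannot separate $p\geq5$ from $p=3$ --- the mechanism you name is impossible, and the citation to Le Bruyn and Lemire--Lorenz is doing all the work. A repair consonant with this paper's own methods: the Frobenius group $F_{p(p-1)}=C_p\rtimes C_{p-1}=N_{S_p}(C_p)$ is transitive in $S_p$ with point stabilizer $C_{p-1}$, so $J_{S_p/S_{p-1}}$ restricts to $J_{F_{p(p-1)}/C_{p-1}}$; Theorem \ref{th15} (Endo) gives $[J_{F_{p(p-1)}/C_{p-1}}]^{fl}\neq 0$ exactly when $C_{p-1}\neq C_2$, i.e.\ for $p\geq 5$, and Lemma \ref{lemp3} (i) then yields $[M]^{fl}\neq 0$. (This restriction-to-Frobenius device is precisely how the paper argues cases (2) and (5)--(8) of Theorem \ref{thmain1}.) Your $n=3$ direction via Theorem \ref{thVo} is fine. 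In sum: the skeleton is right, but both computational cores --- non-invertibility for composite $n$ and nonvanishing of an invertible class for prime $n\geq 5$ --- are respectively mis-executed and misattributed, and each needs the repairs indicated.
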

%
%Let $[R^{(1)}_{K/k}(\bG_m)]^{(t)}$ be the product of $t$ copies of 
%the norm one torus $R^{(1)}_{K/k}(\bG_m)$. 
%
\begin{theorem}[Endo {\cite[Theorem 4.4]{End11}, Hoshi and Yamasaki \cite[Corollary 1.11]{HY17}}]\label{thA}
Let $K/k$ be a non-Galois separable field extension 
of degree $n$ and $L/k$ be the Galois closure of $K/k$. 
Assume that ${\rm Gal}(L/k)=A_n$, $n\geq 4$, 
and ${\rm Gal}(L/K)=A_{n-1}$ is the stabilizer of one of the letters 
in $A_n$.\\
{\rm (i)}\ 
$R^{(1)}_{K/k}(\bG_m)$ is retract $k$-rational 
if and only if $n$ is a prime number.\\
{\rm (ii)}\ $R^{(1)}_{K/k}(\bG_m)$ is stably $k$-rational 
if and only if $n=5$.
\end{theorem}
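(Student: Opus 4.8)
The plan is to translate everything into the language of flasque (flabby) classes and then to localise at Sylow subgroups. Recall that if $M$ is the character lattice of a $k$-torus $T$, then $T$ is retract $k$-rational iff the flasque class $[M]^{\mathrm{fl}}$ is invertible, and stably $k$-rational iff $[M]^{\mathrm{fl}}=0$ (Endo--Miyata, Colliot-Th\'{e}l\`{e}ne--Sansuc, Saltman). Here $M=J_{G/H}$ with $G=A_n$ and $H=A_{n-1}$, and I would use the defining sequence $0\to\bZ\to\bZ[G/H]\to J_{G/H}\to 0$, in which $\bZ[G/H]=\bigoplus_{i=1}^n\bZ e_i$ is the natural permutation lattice for $A_n$ on $\{1,\dots,n\}$ and the map sends $1$ to $e_1+\cdots+e_n$. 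The basic reduction is a splitting observation: for $P\le G$, restriction gives $0\to\bZ\to\bZ[G/H]|_P\to J_{G/H}|_P\to 0$ with $\bZ[G/H]|_P=\bigoplus_O\bZ[O]$ over the $P$-orbits $O$; if $P$ fixes a point $x_0$, projection to that coordinate splits the injection, so $J_{G/H}|_P\simeq\bZ[(G/H)\setminus\{x_0\}]$ is a permutation $P$-lattice and contributes nothing. Since invertibility (resp. triviality) of $[J_{G/H}]^{\mathrm{fl}}$ can be tested after restriction to Sylow subgroups, only fixed-point-free $p$-subgroups matter.

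For the positive direction of (i), suppose $n=p$ is prime. For a prime $q\ne p$, any $q$-subgroup acts on $p$ points with orbit sizes powers of $q$; these cannot all exceed $1$ (else $q\mid p$), so there is a fixed point and the $q$-part is permutation. For $q=p$ the Sylow subgroup is $C_p$ acting regularly, whence $J_{G/H}|_{C_p}\simeq J_{C_p}$, which is invertible since $C_p$ is cyclic. Thus $[J_{G/H}]^{\mathrm{fl}}$ is invertible and $R^{(1)}_{K/k}(\bG_m)$ is retract $k$-rational. For the converse, if $n$ is composite I would produce a prime $p$ and a non-cyclic $p$-subgroup $P\le A_n$ with no fixed point on $\{1,\dots,n\}$. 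When $P$ acts freely (so $|P|\mid n$) one has $\bZ[G/H]|_P\simeq\bZ[P]^{\oplus n/|P|}$, and after a unimodular change of basis in $\GL_{n/|P|}(\bZ[P])$ carrying the all-ones vector $(N_P,\dots,N_P)$ to $(N_P,0,\dots,0)$ one obtains $J_{G/H}|_P\simeq J_P\oplus\bZ[P]^{\oplus(n/|P|-1)}$; hence $[J_{G/H}|_P]^{\mathrm{fl}}=[J_P]^{\mathrm{fl}}$, which is non-invertible because $P$ is non-cyclic (Theorem \ref{th13-1}). This settles every composite $n$ admitting a non-cyclic $p$-group of order dividing $n$, e.g. any $n$ divisible by $4$ or by a square $p^2$.

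The main obstacle, and the part where genuine computation enters, is twofold. First, there remain the composite $n$ for which no non-cyclic $p$-group acts freely -- precisely the squarefree composite $n$, the smallest being $n=6$. For these one must instead take a fixed-point-free non-cyclic $p$-group with orbits of unequal size or with nontrivial point stabilisers (for $n=6$, for instance $\langle(123),(456)\rangle\simeq C_3\times C_3$, with orbits $\{1,2,3\}$ and $\{4,5,6\}$), and verify \emph{directly} that the resulting lattice $J_{G/H}|_P$ still has non-invertible flasque class; this is exactly the kind of finite flasque-class computation carried out in \cite{HY17}. Second, and more seriously, part (ii) is a global statement invisible to localisation: every Sylow restriction of $J_{A_5/A_4}$ is permutation (Sylow $2$- and $3$-subgroups fix a point) or equal to $J_{C_5}$, so all local classes vanish, yet stable rationality demands $[J_{A_5/A_4}]^{\mathrm{fl}}=0$ on the nose.

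For $n=5$ I would prove this by exhibiting an explicit isomorphism $J_{A_5/A_4}\oplus P_1\simeq P_2$ with $P_1,P_2$ permutation $A_5$-lattices, thereby showing $J_{A_5/A_4}$ is stably permutation; this is feasible because the rank is only $4$ and $|A_5|=60$. For prime $n\ge 7$ one must show instead that the global class $[J_{A_n/A_{n-1}}]^{\mathrm{fl}}$, although invertible by (i), is nonzero. Establishing this nonvanishing -- and thereby isolating $n=5$ as the unique stably rational case -- is the crux of the argument, and is where the detailed structural analysis of the flasque class (Endo \cite{End11}) or the machine computation of \cite{HY17} does the real work.
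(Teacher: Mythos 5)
Your framework is the right one, and the pieces you actually prove are correct and match the method of the cited sources (the paper itself gives no proof of Theorem \ref{thA} --- it quotes Endo \cite[Theorem 4.4]{End11} for (i) and for prime $n\geq 7$ in (ii), and \cite[Corollary 1.11]{HY17} for the $n=5$ case, which is indeed settled there by exactly the explicit stably-permutation isomorphism you propose). The fixed-point splitting, the prime case of (i), and the free-action reduction $J_{G/H}|_P\simeq J_P\oplus\bZ[P]^{\oplus(n/|P|-1)}$ combined with Theorem \ref{th13-1} are all sound. One local error: your parenthetical claim that \emph{triviality} of $[J_{G/H}]^{fl}$ can be tested on Sylow restrictions is false --- only invertibility is Sylow-local (a lattice is invertible iff its restriction to every Sylow subgroup is) --- and you implicitly retract it yourself when you observe that all Sylow restrictions of $J_{A_5/A_4}$ have vanishing class while $[J_{A_5/A_4}]^{fl}=0$ still requires a global argument; strike that parenthetical.

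The two genuine gaps are these. First, for (i) your argument covers composite $n$ only when some $p^2\mid n$; the squarefree composite case ($n=6,10,14,15,\dots$) is an \emph{infinite} family, so your appeal to ``the kind of finite flasque-class computation carried out in \cite{HY17}'' cannot close it --- one needs a structural lemma showing non-invertibility of $\rho_P$ for lattices of the form $\bigl(\bigoplus_{i=1}^m\bZ[P/K_i]\bigr)/\bZ\cdot(\nu_1,\dots,\nu_m)$ when the non-cyclic $p$-group $P$ acts with all orbits nontrivial, which is precisely the $p$-group machinery behind Theorem \ref{th14} in \cite{End11} and is absent from your sketch. Second, for (ii) with $n=p\geq 7$ prime you declare the nonvanishing of the global class to be ``the crux'' and give no argument at all, but the paper's own toolkit disposes of it in two lines, by the very pattern used in the proof of Theorem \ref{thmain1} (2), (5)--(8): the normalizer of a Sylow $p$-subgroup of $A_p$ is the Frobenius group $F=C_p\rtimes C_{(p-1)/2}\leq A_p$, which is transitive on the $p$ letters with point stabilizer $C_{(p-1)/2}$ of order $\geq 3$ when $p\geq 7$; hence $J_{A_p/A_{p-1}}|_F\simeq J_{F/C_{(p-1)/2}}$, whose flabby class is nonzero by Theorem \ref{th15} (condition (iii) forces $H=C_2$), and so $[J_{A_p/A_{p-1}}]^{fl}\neq 0$ by Lemma \ref{lemp3} (i). (Reassuringly, for $p=5$ this gives $F=D_5$ and Theorem \ref{th15} yields stable rationality, consistent with (ii).) As written, then, your proposal establishes (i) only for $n$ prime or divisible by a square, and (ii) only for $n=5$ modulo the rank-$4$ computation.
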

Let $nTm$ be the $m$-th transitive subgroup of $S_n$. 
There exist $2$ (resp. $5$, $5$, $16$, $7$, $50$, $34$, $45$, $8$) 
transitive subgroups of $S_3$ (resp. $S_4$, $S_5$, $S_6$, $S_7$, 
$S_8$, $S_9$, $S_{10}$, $S_{11}$) 
(see Butler and McKay \cite{BM83}, \cite{GAP}). 
Let $F_{pm}\simeq C_p\rtimes C_m\leq S_p$ 
be the Frobenius group of order $pm$ where $m\mid p-1$.

% $G=7Tm\leq S_7$ $(1\leq m\leq 7)$ and $G=11Tm\leq S_{11}$ 
%$(1\leq m\leq 8)$ respectively. 

%
\begin{theorem}[{Hoshi and Yamasaki \cite[Theorem 1.10, Theorem 1.14, Theorem 8.5]{HY17}}]\label{th5to11}
Let $K/k$ be a separable field extension of degree $n$ 
and $L/k$ be the Galois closure of $K/k$. 
Let $G={\rm Gal}(L/k)$ be a transitive subgroup of $S_n$ 
and $H={\rm Gal}(L/K)$ with $[G:H]=n$. 
%Assume that $G={\rm Gal}(L/k)$ is a transitive subgroup of $S_n$ 
%%which acts on $L(x_1,x_2,x_3,x_4)$ via $(\ref{acts})$, 
%and $H={\rm Gal}(L/K)$ is the stabilizer of one 
%of the letters in $G$. 
Then %a birational 
a classification of stably/retract rational %the 
norm one tori $T=R_{K/k}^{(1)}(\bG_m)$ in dimension $n-1$ for $n=5,6,7,11$ 
is given as follows:\\ %as in Table $5$.
{\rm (1)} The case $5Tm$ $(1\leq m\leq 5)$.\\
{\rm (i)} $T$ is stably $k$-rational 
for $5T1\simeq C_5$, $5T2\simeq D_5$ and $5T4\simeq A_5$;\\
{\rm (ii)} $T$ is not stably but retract $k$-rational 
for $5T3\simeq F_{20}$ and $5T5\simeq S_5$.\\
{\rm (2)} The case $6Tm$ $(1\leq m\leq 16)$.\\
{\rm (i)} $T$ is stably $k$-rational 
for $6T1\simeq C_6$, $6T2\simeq S_3$ and $6T3\simeq D_6$;\\
{\rm (ii)} $T$ is not retract $k$-rational 
for $6Tm$ with $4\leq m\leq 16$ which is isomorphic to 
$A_4$, 
$C_3\times S_3$, 
$C_2\times A_4$, 
$S_4$, 
$S_4$, 
$S_3^2$, 
$C_3^2\rtimes C_4$, 
$C_2\times S_4$, 
$A_5$, 
$S_3^2\rtimes C_2$
$S_5$, 
$A_6$, 
$S_6$ respectively.\\
{\rm (3)} The case $7Tm$ $(1\leq m\leq 7)$.\\
{\rm (i)} $T$ is stably $k$-rational 
for $7T1\simeq C_7$ and $7T2\simeq D_7$;\\
{\rm (ii)} $T$ is not stably but retract $k$-rational 
for $7T3\simeq F_{21}$, $7T4\simeq F_{42}$, $7T5\simeq \PSL_3(\bF_2)\simeq\PSL_2(\bF_{7})$, 
$7T6\simeq A_7$ and $7T7\simeq S_7$.\\
{\rm (4)} The case $11Tm$ $(1\leq m\leq 8)$.\\ 
{\rm (i)} $T$ is stably $k$-rational 
for $11T1\simeq C_{11}$ and $11T2\simeq D_{11}$;\\
{\rm (ii)} $T$ is not stably but retract $k$-rational 
for $11T3\simeq F_{55}$, $11T4\simeq F_{110}$, $11T5\simeq \PSL_2(\bF_{11})$, 
$11T6\simeq M_{11}$, $11T7\simeq A_{11}$ and $11T8\simeq S_{11}$ where 
$M_{11}$ is the Mathieu group of degree $11$.
\end{theorem}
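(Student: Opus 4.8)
The plan is to translate the rationality question into the language of flabby (flasque) resolutions of $G$-lattices. For $M=J_{G/H}$ one fixes an exact sequence $0\to M\to P\to F\to 0$ with $P$ a permutation lattice and $F$ flabby, and records the flabby class $[M]^{\mathrm{fl}}:=[F]$ in the monoid of flabby lattices modulo permutation and stably permutation ones; this class is a well-defined invariant of $M$. By the Endo--Miyata, Saltman and Colliot-Th\'el\`ene--Sansuc dictionary, the norm one torus $T=R^{(1)}_{K/k}(\bG_m)$, whose function field is $L(J_{G/H})^G$, is stably $k$-rational exactly when $[J_{G/H}]^{\mathrm{fl}}=0$ and retract $k$-rational exactly when $[J_{G/H}]^{\mathrm{fl}}$ is invertible, i.e.\ a direct summand of a permutation lattice. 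The whole statement therefore reduces to deciding, for each transitive pair $(G,H)$ with $G=nTm$ and $H$ a point stabilizer, whether $[J_{G/H}]^{\mathrm{fl}}$ is zero, nonzero but invertible, or non-invertible.

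I would first dispose of the stably rational entries. The regular representations $5T1\simeq C_5$, $6T1\simeq C_6$, $7T1\simeq C_7$, $11T1\simeq C_{11}$ and $6T2\simeq S_3$ are Galois ($H=1$), so Theorem \ref{th13-2} gives $[J_{G/H}]^{\mathrm{fl}}=0$. The odd dihedral cases $5T2\simeq D_5$, $7T2\simeq D_7$, $11T2\simeq D_{11}$ (with $|H|=2$) are the $G=D_n$, $n$ odd, instances of Theorem \ref{th15}, and $5T4\simeq A_5$ is the $n=5$ instance of Theorem \ref{thA}. The one stably rational case lying outside these structural theorems is $6T3\simeq D_6$: here $n=6$ is even and the Sylow $2$-subgroup $C_2\times C_2$ is noncyclic, so neither Theorem \ref{th15} nor a cyclic-Sylow argument is available, and I would instead prove $[J_{D_6/H}]^{\mathrm{fl}}=0$ directly by presenting $J_{D_6/H}$ as a quasi-permutation lattice, i.e.\ by exhibiting a short exact sequence $0\to J_{D_6/H}\to P_1\to P_2\to 0$ of permutation lattices.

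For the retract-but-not-stable block I would establish retract rationality and the failure of stable rationality separately. Retract rationality of the Frobenius groups $5T3\simeq F_{20}$, $7T3\simeq F_{21}$, $7T4\simeq F_{42}$, $11T3\simeq F_{55}$, $11T4\simeq F_{110}$ follows from Theorem \ref{th15}, since all of their Sylow subgroups are cyclic; that of $5T5\simeq S_5$, $7T7\simeq S_7$, $11T8\simeq S_{11}$ and of $7T6\simeq A_7$, $11T7\simeq A_{11}$ follows from Theorems \ref{thS} and \ref{thA} because $5,7,11$ are prime. The three remaining groups $7T5\simeq \PSL_2(\bF_7)$, $11T5\simeq \PSL_2(\bF_{11})$ and $11T6\simeq M_{11}$ have noncyclic Sylow $2$-subgroups, so the cyclic-Sylow criterion does not apply; for these I would compute an explicit flabby resolution and verify that $[J_{G/H}]^{\mathrm{fl}}$ splits off a permutation lattice, hence is invertible. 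Failure of stable rationality then follows: for the Frobenius groups because they do not appear in the list of stably rational groups in Theorem \ref{th15}, and for the remaining groups because the computed class $[J_{G/H}]^{\mathrm{fl}}$ is invertible but not stably permutation, so $[J_{G/H}]^{\mathrm{fl}}\neq 0$.

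Finally the non-retract-rational block consists of the thirteen groups $6T4,\ldots,6T16$ (among them $A_4$, $S_4$, $C_3^2\rtimes C_4$, $A_5$, $A_6$ and $S_6$), for which I must show $[J_{G/H}]^{\mathrm{fl}}$ is not invertible. Since invertibility is inherited by restriction, for each such group I would locate a subgroup $G'\le G$ --- typically a noncyclic Sylow subgroup such as $C_2\times C_2$ or $C_3\times C_3$, or a copy of $A_4$, reflecting the non-cyclic-Sylow obstruction of Theorem \ref{th13-1} --- on which $\mathrm{Res}_{G'}J_{G/H}$, computed through the Mackey decomposition of $\bZ[G/H]$, has non-invertible flabby class; this is usually certified by a nonvanishing $\widehat{H}^{1}(G'',F')\neq 0$ for some $G''\le G'$, and otherwise by the explicit non-invertibility of the computed class. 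The main obstacle is computational rather than conceptual: one must produce correct integral matrix presentations of $J_{G/H}$ of ranks up to ten, compute flabby resolutions, and --- the delicate step --- \emph{decide} whether the resulting flabby class is trivial, invertible, or neither. For the large groups ($A_6$, $S_6$, $A_7$, $S_7$, $\PSL_2(\bF_{11})$, $M_{11}$, $A_{11}$, $S_{11}$) this is infeasible by hand and must be organized through computer algebra together with restriction and induction reductions and the Tate-cohomological invariants that detect the class; verifying that these machine computations are complete and correct --- in particular that a class reported as invertible genuinely splits off a permutation lattice --- is the crux of the argument.
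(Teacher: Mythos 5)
Your proposal is correct in substance and follows the same overall framework as the actual proof; note that the present paper does not reprove this theorem but imports it from \cite{HY17}, where the argument is precisely your reduction to the flabby class $[J_{G/H}]^{fl}$ via Theorem \ref{thEM73}, the structural theorems for the Galois, dihedral, Frobenius, symmetric and alternating cases (Theorems \ref{th13-2}, \ref{th15}, \ref{thS}, \ref{thA}), restriction arguments via Lemma \ref{lemp3} for the non-retract block in $n=6$, and GAP computations for the residue (including an explicit quasi-permutation presentation certifying $[J_{G/H}]^{fl}=0$ for $6T3\simeq D_6$ --- your observation that $[M]^{fl}=0$ is equivalent to quasi-permutation is the right mechanism there). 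The one place where you take a genuinely different, and costlier, route is the triple $7T5\simeq\PSL_2(\bF_7)$, $11T5\simeq\PSL_2(\bF_{11})$, $11T6\simeq M_{11}$: you propose computing explicit flabby resolutions to verify invertibility and then a machine verification that the invertible class is not stably permutation --- the step you yourself flag as the crux, since $[F]\neq 0$ is a non-existence assertion and becomes finitely checkable only through the ``possibility of stably permutation'' necessary-condition computation of \cite{HY17}. The method used in this paper for the very same groups (proof of Theorem \ref{thmain1}, parts (5)--(7)) avoids both computations: retract rationality of \emph{every} transitive $G\leq S_p$ with $p$ prime follows uniformly from Theorem \ref{thS} (i) together with Lemma \ref{lemp3} (ii), because transitivity gives $\mathrm{Res}_G\, J_{S_p/S_{p-1}}\simeq J_{G/H}$, so you need not resolve $\PSL_2(\bF_7)$, $\PSL_2(\bF_{11})$ or $M_{11}$ at all; and $[J_{G/H}]^{fl}\neq 0$ follows from Lemma \ref{lemp3} (i) by restricting to a transitive Frobenius subgroup ($F_{21}\leq\PSL_2(\bF_7)$, and $C_{11}\rtimes C_5\simeq F_{55}\leq\PSL_2(\bF_{11}),\,M_{11}$, these being normalizers of a $p$-Sylow subgroup), on which Theorem \ref{th15} already gives a nonzero class since the point stabilizer is not $C_2$. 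So your plan does prove everything, but the restriction trick buys the two hardest-looking entries essentially for free and confines the computer work to the $n=6$ non-invertibility checks and the explicit stably-permutation isomorphism for $D_6$.
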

%

%
%\begin{remark}
%Theorem \ref{th5to11} (1) is already proved by Endo and Miyata 
%except for the case of $A_5$ 
%(see Theorems \ref{th13-1}, \ref{th13-2}, \ref{th15}, \ref{thS} and \ref{thA}).%\end{remark}

%%%%%%%%%%%%%%%%%%%%%%%%%%%%%%%%%%%%%%%%%%%%%%%%%%%%%%%

\begin{theorem}[see {Dixon and Mortimer \cite[page 99]{DM96}}]\label{thTp}
Let $p$ be a prime number and $G\leq S_p$ be a transitive subgroup.\\ 
{\rm (1)} If $G$ is solvable, then $G\simeq C_p\rtimes C_m\leq S_p$ 
is the Frobenius group of order $pm$ with $m\mid p-1$.\\ 
{\rm (2)} If $G$ is not solvable, then $G$ is one of the following:\\
{\rm (i)} $G=S_p$ or $G=A_p\leq S_{p}$;\\
{\rm (ii)} $G=\PSL_2(\bF_{11})\leq S_{11}$;\\
{\rm (iii)} $G=M_{11}\leq S_{11}$ or $G=M_{23}\leq S_{23}$ where 
$M_p$ is the Mathieu group of degree $p$;\\
{\rm (iv)} $\PSL_d(\bF_q)\leq G\leq 
{\rm P\Gamma L}_d(\bF_q)\simeq \PGL_d(\bF_q)\rtimes C_e$ 
where $p=\frac{q^d-1}{q-1}$, $q=l^e$ is a prime power and 
%$\PGL_d(\bF_q)\lhd {\rm P\Gamma L}_d(\bF_q)$ and . 
\end{theorem}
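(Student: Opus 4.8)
The plan is to reduce first to a primitive action and then split into the solvable and non-solvable cases. Since any block of a transitive action on $p$ points has size dividing $p$, the only block systems are trivial, so a transitive $G\leq S_p$ of prime degree is automatically primitive. For part (1) I would invoke the classical structure theory of solvable primitive groups: a solvable primitive permutation group has a unique minimal normal subgroup, which is elementary abelian and regular. As the degree is the prime $p$, this subgroup is $C_p$, a point stabilizer embeds into ${\rm Aut}(C_p)\simeq C_{p-1}$, and transitivity forces $G$ to contain the regular $C_p$ as a normal subgroup. Thus $G\simeq C_p\rtimes C_m$ with $m\mid p-1$, and since the cyclic complement $C_m\leq {\rm AGL}_1(p)$ acts by multiplication on $\bF_p\setminus\{0\}$ without nontrivial fixed points, $G$ is the Frobenius group $F_{pm}$. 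This establishes (1) and shows $G\leq{\rm AGL}_1(p)$ in the solvable case.

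For part (2) the key input is Burnside's theorem on groups of prime degree: a transitive group of prime degree $p$ is either $2$-transitive or has a normal Sylow $p$-subgroup, in which case it is the Frobenius group of (1) and hence solvable. Taking the contrapositive, a non-solvable $G$ must act $2$-transitively. I would then bring in the classification of finite $2$-transitive permutation groups, which sorts them into those of \emph{affine type}, possessing an elementary abelian regular normal subgroup of order $q^d$, and those of \emph{almost simple type}, whose socle is a nonabelian simple group. In the affine type the degree $q^d$ equals a prime $p$ only when $d=1$, forcing $G\leq{\rm AGL}_1(p)$ and hence solvability; so a non-solvable group of prime degree must be of almost simple type.

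The remaining and main task is to run through the list of almost simple $2$-transitive groups and keep exactly those of prime degree. The natural actions of $A_n$ and $S_n$ have degree $n$, giving $A_p,S_p$ in (i); the projective groups $\PSL_d(\bF_q)\leq G\leq{\rm P\Gamma L}_d(\bF_q)$ act on $(q^d-1)/(q-1)$ points, and keeping the prime values gives (iv); the exceptional degree-$11$ action of $\PSL_2(\bF_{11})$ (arising from an $A_5$ subgroup rather than its natural action on $12$ points) gives (ii); and among the sporadic examples only the Mathieu groups $M_{11}$ (degree $11$) and $M_{23}$ (degree $23$) have prime degree, giving (iii). The hard part will be the exhaustive verification that every other family in the list has composite degree: the unitary groups ${\rm PSU}_3(q)$ and the Ree groups have degree $q^3+1=(q+1)(q^2-q+1)$, the symplectic groups ${\rm Sp}_{2d}(2)$ have degrees $2^{d-1}(2^d\pm 1)$, the Suzuki groups have degree $q^2+1$ with $q=2^{2n+1}$ (always divisible by $5$), and the remaining sporadic actions ($M_{12}$ on $12$, $M_{22}$ on $22$, $M_{24}$ on $24$, the Higman--Sims and Conway actions, the degree-$15$ action of $A_7$, and so on) all have manifestly composite degree. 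Since this enumeration rests on the complete classification of finite $2$-transitive groups, the genuine obstacle is not any single factorization but the dependence on the classification of finite simple groups that underlies it; in practice I would simply cite Dixon--Mortimer for the $2$-transitive classification and reduce the proof to the degree-primality check just described.
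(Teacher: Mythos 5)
Theorem \ref{thTp} is stated in the paper without proof --- it is quoted as known from Dixon and Mortimer \cite[page 99]{DM96} --- and your outline correctly reconstructs the standard argument behind that citation: primitivity is automatic in prime degree, Burnside's theorem gives the dichotomy between $2$-transitivity and a normal Sylow $p$-subgroup (the latter yielding $G\leq {\rm AGL}_1(\bF_p)$, i.e.\ the Frobenius groups of part (1)), and the CFSG-based classification of finite $2$-transitive groups, filtered by primality of the degree, yields exactly the list (i)--(iv). Your composite-degree eliminations (affine degrees $q^d$ prime only for $d=1$; ${\rm PSU}_3(q)$ and Ree groups of degree $(q+1)(q^2-q+1)$; ${\rm Sp}_{2d}(2)$ of degrees $2^{d-1}(2^d\pm 1)$; Suzuki degree $q^2+1\equiv 0 \pmod 5$; the sporadic actions of composite degree) are all sound, so the proposal is correct and follows essentially the same route as the cited source.
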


Theorem \ref{thmain1} and Theorem \ref{thmain2} are the main results 
of this paper. 

\begin{theorem}\label{thmain1}
Let $p\geq 3$ be a prime number, 
$K/k$ be a separable field extension of degree $p$ 
and $L/k$ be the Galois closure of $K/k$. 
Let $G={\rm Gal}(L/k)$ be a transitive subgroup of $S_p$ 
and $H={\rm Gal}(L/K)$ with $[G:H]=p$.
%Assume that $G={\rm Gal}(L/k)$ is a transitive subgroup of $S_p$ 
%%which acts on $L(x_1,x_2,x_3,x_4)$ via $(\ref{acts})$, 
%and $H={\rm Gal}(L/K)$ is the stabilizer of one of the letters in $G$. 
Then norm one tori $T=R_{K/k}^{(1)}(\bG_m)$ of dimension $p-1$ 
are retract $k$-rational and 
a stably rational classification of $T$ is given as follows:\\ 
{\rm (1)} $T$ is stably $k$-rational 
for $G\simeq C_p\leq S_p$ and $G\simeq D_p\leq S_p$;\\
{\rm (2)} $T$ is not stably $k$-rational 
for $G\simeq C_p\rtimes C_m\leq S_p$ with $3\leq m\mid p-1$;\\
{\rm (3)} %$T$ is stably $k$-rational for $G\simeq S_3$ and 
$T$ is not stably $k$-rational for $G\simeq S_p$ where $p\geq 5$;\\
{\rm (4)} $T$ is stably $k$-rational for $G\simeq A_5\leq S_5$ 
and $T$ is not stably $k$-rational for $G\simeq A_p\leq S_p$ where $p\geq 7$;\\
{\rm (5)} $T$ is not stably $k$-rational 
for $G\simeq \PSL_2(\bF_{11})\leq S_{11}$;\\
{\rm (6)} $T$ is not stably $k$-rational 
for $G\simeq M_{11}\leq S_{11}$ and $G\simeq M_{23}\leq S_{23}$;\\
{\rm (7)} %Assume that $p$ is not a Fermat prime. 
%Then 
$T$ is not stably $k$-rational 
for $\PSL_d(\bF_q)\leq G\leq 
{\rm P\Gamma L}_d(\bF_q)\simeq \PGL_d(\bF_q)\rtimes C_e$
%{\rm P\Gamma L}_d(\bF_q)$ 
where $d\geq  3$, $p=\frac{q^d-1}{q-1}$ and $q=l^e$ is a prime power;\\
{\rm (8)} %Assume that $p=2^e+1$ is a Fermat prime. 
%Then 
$T$ is not stably $k$-rational 
for $\PSL_2(\bF_{2^e})< G\leq 
{\rm P\Gamma L}_2(\bF_{2^e})\simeq \PSL_2(\bF_{2^e})\rtimes C_e$ 
%{\rm P\Gamma L}_2(\bF_{2^e})$ 
where $p=2^e+1$ is a Fermat prime. 
%and 
%${\rm P\Gamma L}_d(\bF_{2^e})\simeq \PSL_d(\bF_{2^e})\rtimes C_e$
\end{theorem}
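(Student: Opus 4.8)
The plan is to run through the transitive subgroups $G\leq S_p$ furnished by Theorem \ref{thTp} and treat the eight families uniformly, the guiding principle being that among all such $G$ the norm one torus is stably $k$-rational exactly for $G\simeq C_p$, $G\simeq D_p$ and $G\simeq A_5$, and merely retract $k$-rational otherwise. Throughout I write $P$ for a Sylow $p$-subgroup of $G$; since $p^2\nmid p!$ we have $P\simeq C_p$, and since the centralizer of a $p$-cycle in $S_p$ is $\langle P\rangle$ we get $C_G(P)=P$, so $N_G(P)=C_p\rtimes C_m$ is a Frobenius group with $m=[N_G(P):P]\mid p-1$. The whole argument will pivot on computing $m$.

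First I would dispose of retract rationality for every $G$ at once via the local criterion for invertibility of the flabby class (Endo--Miyata, Colliot-Th\'el\`ene--Sansuc): $[J_{G/H}]^{\rm fl}$ is invertible once its restriction to each Sylow subgroup $G_q$ is invertible. For $q=p$ the group $P\simeq C_p$ already acts transitively on the $p$ cosets $G/H$, so $\mathrm{Res}_P\,\bZ[G/H]=\bZ[C_p]$ and $\mathrm{Res}_P\,J_{G/H}=J_{C_p}$; flabby lattices over a cyclic group are invertible, so this is fine. For $q\neq p$ the $G_q$-orbits on $G/H$ have $q$-power size and sum to $p$, forcing a fixed point $x_0$; hence $\mathrm{Res}_{G_q}\,\bZ[G/H]=\bZ x_0\oplus N$ with $N$ a permutation lattice, the norm map $\bZ\to\bZ[G/H]$ splits off $\bZ x_0$, and $\mathrm{Res}_{G_q}\,J_{G/H}\cong N$ is again permutation. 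Thus $[J_{G/H}]^{\rm fl}$ is locally trivial, a fortiori invertible, and $T$ is retract $k$-rational in all cases.

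For the stable classification the positive cases $C_p$, $D_p$ and $A_5$ follow directly from Theorems \ref{th13-2}, \ref{th15} and \ref{thA}, and case (2) is exactly Endo's Theorem \ref{th15} (the Frobenius group $C_p\rtimes C_m$ is stably rational only when its point stabilizer $C_m$ has order $2$). The heart of the matter is then a single reduction handling (3)--(8) simultaneously. Since $P$, hence $N_G(P)$, is transitive on $G/H$, we have $N_G(P)H=G$, a single double coset, and Mackey's formula gives $\mathrm{Res}_{N_G(P)}\,\bZ[G/H]=\bZ[N_G(P)/(N_G(P)\cap H)]$; as the norm element is preserved this yields $\mathrm{Res}_{N_G(P)}\,J_{G/H}\cong J_{N_G(P)/(N_G(P)\cap H)}$, the norm one lattice of the Frobenius group $N_G(P)=C_p\rtimes C_m$. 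Because restriction commutes with flabby resolution, $\mathrm{Res}_{N_G(P)}\,[J_{G/H}]^{\rm fl}=[J_{C_p\rtimes C_m}]^{\rm fl}$, which by case (2) is nonzero as soon as $m\geq 3$. A nonzero restriction forces $[J_{G/H}]^{\rm fl}\neq 0$, so $T$ fails to be stably $k$-rational whenever $m\geq 3$.

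It remains to compute $m=[N_G(P):P]$ for each family, and this is where the real work lies. For $G=S_p$ one has $m=p-1\geq 4$; for $G=A_p$ one has $m=(p-1)/2$, which is $\geq 3$ precisely when $p\geq 7$ and equals $2$ for $A_5$, exactly accounting for the dichotomy in (4). For $\PSL_2(\bF_{11})$, $M_{11}$ and $M_{23}$ the normalizers $C_{11}\rtimes C_5$, $C_{11}\rtimes C_5$ and $C_{23}\rtimes C_{11}$ give $m=5,5,11$ (read off from the subgroup structure), all $\geq 3$. For the Lie-type family (7) the element of order $p=(q^d-1)/(q-1)$ is the image of a Singer cycle, whose normalizer in $\PGL_d(\bF_q)$ is $C_p\rtimes C_d$; the crucial input is that $p$ being prime forces $\mathrm{ord}_p(q)=d$ (otherwise $p\mid q^{d'}-1<p$ for some $d'<d$), so the Frobenius acts on $C_p$ with order exactly $d$ and $m\geq d\geq 3$. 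The subtlest point is family (8): for $G=\PSL_2(\bF_{2^e})$ alone the non-split torus has dihedral normalizer $D_p$, giving $m=2$, so the reduction says nothing; this is exactly why strict containment $\PSL_2(\bF_{2^e})<G\leq {\rm P\Gamma L}_2(\bF_{2^e})$ is imposed, the extra field automorphisms enlarging $m$ beyond $2$. Verifying these normalizer orders, in particular tracking the Singer and Frobenius elements inside $\PSL_d$ rather than $\PGL_d$ or ${\rm P\Gamma L}_d$ and confirming the field-automorphism contribution in (8), is the main obstacle of the proof.
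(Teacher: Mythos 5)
Your architecture for the negative cases coincides with the paper's core method: the paper also kills stable rationality in (5)--(8) by exhibiting a transitive Frobenius subgroup $C_p\rtimes C_m\leq G$ with $m\geq 3$, applying Theorem \ref{th15} (case (2)) to it, and pulling the nonvanishing of the flabby class back to $G$ via Lemma \ref{lemp3}(i). Your identification of this subgroup as $N_G(P)$ and your uniform treatment of (3) and (4) by the same reduction (computing $m=p-1$ for $S_p$ and $m=(p-1)/2$ for $A_p$) is a correct streamlining --- the paper instead simply cites Theorems \ref{thS} and \ref{thA} for those cases, and it still needs Theorem \ref{thA} for the positive $A_5$ statement, as do you. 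Your Sylow-local proof of retract rationality (fixed point of $G_q$ on $G/H$ for $q\neq p$ splits off the norm, plus $\mathrm{Res}_P J_{G/H}=J_{C_p}$) is also sound, but note it silently invokes the fact that invertibility of a flabby class can be tested on Sylow subgroups (known, via the $\mathrm{Ext}$-splitting characterization of invertible flabby lattices, cf.\ \cite{CTS77}), whereas the paper gets retract rationality in one line: $[J_{S_p/S_{p-1}}]^{fl}$ is invertible by Theorem \ref{thS}(i) since $p$ is prime, and restriction preserves invertibility by Lemma \ref{lemp3}(ii).

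The genuine gap is the one you flag yourself: ``verifying these normalizer orders'' in (7) and (8) is not a routine check but is precisely the mathematical content the paper supplies. For (7) the needed statement is the paper's Lemma \ref{lemP}: $N_{\PSL_d(\bF_q)}(P)\simeq C_p\rtimes C_d$, transitive of order $pd$. Its proof requires showing $d$ is prime and $\gcd\{d,q-1\}=1$ (so that passing from $\PGL_d$ to $\PSL_d$ loses nothing --- a point your sketch only gestures at), and then pins down $|N_G(P)|=pd$ not by citing Singer-torus structure theory but by the cyclotomic computation $\prod_{i=1}^{d-1}(q^d-q^i)\equiv d\pmod{p}$ (evaluate $f(X)=\prod_{i=1}^{d-1}(X^d-X^i)$ at roots of $\Phi_d$) combined with Sylow's theorem $[G:N_G(P)]\equiv 1\pmod{p}$. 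For (8), your assertion that the field automorphisms ``enlarge $m$ beyond $2$'' is exactly the step that could a priori fail: enlarging $G$ by a factor $2$ could instead double the number of Sylow $p$-subgroups, leaving $N_G(P)=D_p$. The paper excludes this by a congruence: $[G_0:H_0]=2^{e-1}(2^e-1)\equiv 1\pmod{p}$ (since $2^e\equiv -1$), so if the normalizer did not grow the index would become $\equiv 2\pmod{p}$, contradicting Sylow; hence $H_{m^\prime}\simeq C_p\rtimes C_{2^{m^\prime+1}}$ with $2^{m^\prime+1}\geq 4$ once $G$ strictly contains $\PSL_2(\bF_{2^e})$. Until these two computations are carried out, your argument establishes (1)--(6) but not (7) and (8).
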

\begin{remark}
We do not know whether $T$ is stably $k$-rational 
in the case {\rm (8)} in Theorem \ref{thmain1} when 
$G=\PSL_2(\bF_{2^e})$ and $p\geq 17$. %is a Fermat prime. 
%i.e. $d=2$. 
Note that for Fermat primes $p=3$ and $5$, 
$T$ is stably $k$-rational for $G=\PSL_2(\bF_{2^e})$ 
by Theorem \ref{thmain1} (1), (4) (note that 
$\PSL_2(\bF_2)\simeq D_3\simeq S_3$, 
$\PSL_2(\bF_4)=\PGL_2(\bF_4)\simeq A_5$).
% ${\rm P\Gamma L}_2(\bF_4)\simeq S_5$. 
%Hence Theorem \ref{thmain1} covers these cases for $p=3$ and $5$. 
\end{remark}
\begin{theorem}\label{thmain2}
Let $K/k$ be a separable field extension of degree $n$ 
and $L/k$ be the Galois closure of $K/k$. 
Let $G={\rm Gal}(L/k)$ be a transitive subgroup of $S_n$ 
and $H={\rm Gal}(L/K)$ with $[G:H]=n$. 
%Assume that $G={\rm Gal}(L/k)$ is a transitive subgroup of $S_n$ 
%%which acts on $L(x_1,x_2,x_3,x_4)$ via $(\ref{acts})$, 
%and $H={\rm Gal}(L/K)$ is the stabilizer of one of the letters in $G$. 
Then %a birational 
a classification of stably/retract rational %the 
norm one tori $T=R_{K/k}^{(1)}(\bG_m)$ in dimension $n-1$ 
for $n=8,9,10$ is given as follows:\\ %as in Table $5$.
{\rm (1)} The case $8Tm$ $(1\leq m\leq 50)$.\\
{\rm (i)} $T$ is stably $k$-rational for $8T1\simeq C_8$;\\
{\rm (ii)} $T$ is not retract $k$-rational for $8Tm$ with $2\leq m\leq 50$.\\
{\rm (2)} The case $9Tm$ $(1\leq m\leq 34)$.\\
{\rm (i)} $T$ is stably $k$-rational 
for $9T1\simeq C_9$ and $9T3\simeq D_9$;\\
{\rm (ii)} $T$ is retract $k$-rational for $9T27\simeq \PSL_2(\bF_8)$;\\
{\rm (iii)} $T$ is not retract $k$-rational for $9Tm$ with 
$2\leq m\leq 34\ \textrm{and}\ m\neq 3, 27$.\\
{\rm (3)} The case $10Tm$ $(1\leq m\leq 45)$.\\
{\rm (i)} $T$ is stably $k$-rational for 
$10T1\simeq C_{10}$, $10T2\simeq D_5$ and $10T3\simeq D_{10}$;\\
{\rm (ii)} $T$ is retract $k$-rational for $10T11\simeq A_5\times C_2$;\\ 
{\rm (iii)} $T$ is not stably but retract $k$-rational for 
$10T4\simeq F_{20}$, $10T5\simeq F_{20}\times C_2$, 
$10T12\simeq S_5$ and $10T22\simeq S_5\times C_2$;\\
{\rm (iv)} $T$ is not retract $k$-rational for $10Tm$ with 
$6\leq m\leq 45\ \textrm{and}\ m\neq 11,12,22$.
\end{theorem}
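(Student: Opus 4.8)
The plan is to treat Theorem~\ref{thmain2} as a large but finite case analysis, reducing each transitive group $nTm$ ($n=8,9,10$) to an explicit computation with the associated $G$-lattice $J_{G/H}$ via the theory of flabby (flasque) resolutions. For each such group $G$ I would first realize $G$ as a concrete transitive subgroup of $S_n$ (using the Butler--McKay/GAP labelling \cite{BM83}, \cite{GAP}), compute a $\bZ$-basis and the integral matrix representation for the Chevalley lattice $J_{G/H}$ of rank $n-1$ coming from $0\to\bZ\to\bZ[G/H]\to J_{G/H}\to 0$, and then form a flabby resolution $0\to J_{G/H}\to P\to F\to 0$ with $P$ permutation and $F$ flabby. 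The key invariant is the class $[F]$ of the flabby lattice $F$ in the commutative monoid of flabby classes: $T$ is retract $k$-rational if and only if $[F]$ is \emph{invertible}, and $T$ is stably $k$-rational if and only if $[F]=0$ (the flabby class is trivial, i.e. $F$ is invertible and stably permutation). This is the standard criterion of Endo--Miyata, Voskresenskii, Colliot-Th\'el\`ene--Sansuc and Saltman that underlies all the earlier theorems in the excerpt.

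Concretely, I would proceed in the following steps. First, for the stably rational entries (1)(i), (2)(i), (3)(i)), I would exhibit an explicit stably-permutation flabby resolution, or invoke the already-classified Galois cases: $C_n$, $D_n$ with $n$ odd, and the small dihedral/Frobenius groups are covered by Theorem~\ref{th13-2} and Theorem~\ref{th15}, so $10T2\simeq D_5$ and the cyclic $nT1$ reduce to known positive results, while $9T3\simeq D_9$ and $10T3\simeq D_{10}$ require a direct verification that the flabby class vanishes. Second, for the ``retract but not stably'' cases --- $10T4\simeq F_{20}$, $10T5\simeq F_{20}\times C_2$, $10T12\simeq S_5$, $10T22\simeq S_5\times C_2$, and the retract-rational $9T27\simeq\PSL_2(\bF_8)$ and $10T11\simeq A_5\times C_2$ --- I would compute $[F]$ and show it is invertible but nonzero, the nonvanishing being detected by a nontrivial value of a cohomological obstruction such as $\widehat H^0$ or $H^1$ of $F$ restricted to suitable subgroups, or by the group-cohomology invariant $H^1(G,J_{G/H})$ together with the criterion separating stable from retract rationality. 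Third, for all remaining groups ($8Tm$ with $2\le m\le 50$; $9Tm$ with $m\neq 1,3,27$; $10Tm$ with $6\le m\le 45$, $m\neq 11,12,22$) I would show $[F]$ is \emph{not} invertible, which is the obstruction to retract rationality; here the efficient route is to find, for each group, a subgroup $H'\le G$ on which the restriction of $F$ already fails to be invertible, so that non-retract-rationality is inherited from a smaller or better-understood quotient lattice.

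The actual engine of the proof is algorithmic: the flabby class $[J_{G/H}]^{fl}$ and its invertibility/triviality can be decided by the computational methods for $G$-lattices developed in the authors' earlier work (the same machinery behind Theorem~\ref{th5to11}), using GAP together with routines that compute flabby resolutions, test permutation-projectivity, and evaluate the relevant Tate cohomology groups over all conjugacy classes of subgroups. I would organize the write-up as a table indexed by $nTm$, recording for each group the isomorphism type, the rank-$(n-1)$ representation, the decision (stably rational / retract rational / neither), and the certificate (an explicit stably-permutation complement in the stably rational cases, and an explicit subgroup witnessing non-invertibility in the negative cases). Wherever a group contains as a section one of the groups already resolved in Theorems~\ref{thS}--\ref{th5to11} (for instance $S_5$, $A_5$, $A_6$, $S_6$ appearing inside the degree-$10$ list), I would quote those results directly to shorten the analysis.

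The main obstacle I expect is twofold. The computational bottleneck is the sheer number of groups --- $50+34+45$ transitive groups --- each requiring a flabby resolution of a lattice of rank up to $9$, where testing invertibility and triviality of the flabby class amounts to integral linear-algebra and cohomology computations that do not always terminate cheaply; controlling these, and certifying the negative (non-invertibility) results rigorously rather than heuristically, is the delicate part. The conceptual obstacle is isolating the borderline retract-but-not-stably cases and the purely retract cases (such as $9T27\simeq\PSL_2(\bF_8)$ and $10T11\simeq A_5\times C_2$): here one must verify both that $[F]$ is invertible and that it is nonzero, which requires a genuine cohomological separation argument rather than a single vanishing computation, and it is exactly the step where a careless computation could misclassify a torus. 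I would therefore cross-check these boundary cases against the general criteria of Theorems~\ref{th13-1}--\ref{th15} (in particular the Sylow-cyclic characterization of retract rationality in the Galois case, applied to relevant sections) to guard against computational error.
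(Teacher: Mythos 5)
Your overall architecture --- decide whether $[J_{G/H}]^{fl}$ is invertible or zero via flabby resolutions, settle the Galois and cyclic-Sylow cases by Theorems \ref{th13-1}--\ref{th15}, run the GAP machinery of \cite{HY17} on the borderline groups, and propagate negative answers through subgroups --- is indeed the paper's architecture, but two of your concrete mechanisms are broken. First, your method for separating ``retract but not stably rational'': you propose to detect $[F]\neq 0$ by ``a nontrivial value of $\widehat H^0$ or $H^1$ of $F$ restricted to suitable subgroups, or by $H^1(G,J_{G/H})$''. In exactly these cases $F$ is invertible, hence flabby \emph{and} coflabby (Lemma \ref{lemSL}), so $H^1(H',F)=\widehat H^{-1}(H',F)=0$ for \emph{every} subgroup $H'\leq G$; and nonvanishing of some $\widehat H^0(H',F)$ is merely a constraint on candidate permutation complements, never a proof that no relation $F\oplus P\simeq P'$ exists. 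The paper instead certifies $[F]\neq 0$ for $10T4$ and $10T5$ by exhausting all possible stably permutation relations (the integral linear-algebra routine \texttt{PossibilityOfStablyPermutationF}), and then transfers $[F]\neq 0$ to $10T12$ and $10T22$ through the transitive inclusions $10T4\leq 10T12$, $10T5\leq 10T22$ and Lemma \ref{lemp3}(i). Worse, you lump $9T27\simeq \PSL_2(\bF_8)$ and $10T11\simeq A_5\times C_2$ into the same ``invertible but nonzero'' bucket: the theorem asserts only retract rationality there, and the paper explicitly records that stable rationality of these two cases is \emph{open}; that step of your plan is both unnecessary for the statement and, as far as is known, impossible to complete.

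Second, your shortcut of quoting Theorems \ref{thS}--\ref{th5to11} wherever a group ``contains as a section one of the groups already resolved (for instance $S_5$, $A_5$, $A_6$, $S_6$ appearing inside the degree-$10$ list)'' is unsound: rationality of $R^{(1)}_{K/k}(\bG_m)$ depends on the pair $(G,H)$, i.e.\ on the permutation action, not on the abstract isomorphism type of $G$. Theorems \ref{thS} and \ref{thA} govern only the natural actions with stabilizer $S_{n-1}$, $A_{n-1}$; in fact $10T7\simeq A_5$ in its $10$-point action is \emph{not} retract rational although $5T4\simeq A_5$ is stably rational, and $10T26\simeq A_6$, $10T32\simeq S_6$ are likewise outside the scope of the natural-degree results. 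All inheritance must run through \emph{transitive} subgroups in the same degree via Lemma \ref{lemp3}; this is how the paper reduces all of $10Tm$ ($6\leq m\leq 45$, $m\neq 11,12,22$) to five base computations ($m=6,7,8,10,18$) along explicit inclusion chains. Relatedly, your plan to run the lattice computation group by group is not realistic for $A_8$, $S_8$, $A_9$, $S_9$; the paper needs essentially no machine computation in degrees $8$ and $9$ because of a structural fact you omit: a transitive group of prime-power degree $p^k$ has a \emph{transitive} Sylow $p$-subgroup, so whenever that Sylow subgroup is non-cyclic, Theorem \ref{th13-1} (Galois case) or Endo's Theorem \ref{th14} applied to the Sylow subgroup together with Lemma \ref{lemp3}(ii) (non-Galois case) already refutes retract rationality, leaving only $8T1$, $9T1$, $9T3$ (covered by Theorems \ref{th13-2} and \ref{th15} --- your proposed ``direct verification'' for $D_9$ is not needed) and $9T27$ for individual treatment. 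Your ``witnessing subgroup'' hedge gestures at this, but without the Sylow-transitivity observation and Theorem \ref{th14} it does not discharge the roughly eighty negative cases in degrees $8$ and $9$.
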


\begin{remark}
(1) In the cases (2)-(ii) $9T27\simeq \PSL_2(\bF_8)$ and 
(3)-(ii) $10T11\simeq A_5\times C_2$ in Theorem \ref{thmain2}, 
we do not know whether $T$ is stably $k$-rational. \\
(2) For the reader's convenience, we note that:\\
%$8T1\simeq C_8$, 
(i) $8T2\simeq C_4\times C_2$, 
$8T3\simeq C_2\times C_2\times C_2$, 
$8T4\simeq D_4$, 
$8T5\simeq Q_8$, 
$8T6\simeq D_8$, 
$8T7\simeq M_{16}$,
$8T8\simeq QD_8$, 
%$8T9\simeq D_8\times C_2$, 
$8T12\simeq \SL_2(\bF_3)$, 
%$8T13\simeq A_4\times C_2$, 
$8T14\simeq S_4$, 
$8T23\simeq \GL_2(\bF_3)$, 
%$8T24\simeq S_4\times C_2$, 
%$8T25\simeq (C_2\times C_2\times C_2)\rtimes C_7$, 
$8T37\simeq \PSL_2(\bF_7)\simeq \PSL_3(\bF_2)$, 
%$8T42\simeq (A_4\times A_4)\rtimes C_2$, 
%$8T43\simeq \PSL_3(\bF_2)\rtimes C_2$, 
%$8T46\simeq (A_4\times A_4)\rtimes C_4$, 
%$8T47\simeq (S_4\times S_4)\rtimes C_2$, 
%$8T48\simeq (C_2\times C_2\times C_2)\rtimes\PSL_3(\bF_2)$, 
$8T49\simeq A_8$, 
$8T50\simeq S_8$.\\ 
%$9T1\simeq C_9$, 
(ii) $9T2\simeq C_3\times C_3$, 
%$9T3\simeq D_9$, 
$9T27\simeq \PSL_2(\bF_8)$, 
$9T33\simeq A_9$, 
$9T34\simeq S_9$.\\ 
%$10T1\simeq C_{10}$, 
%$10T2\simeq D_5$, 
%$10T3\simeq D_{10}$, 
%$10T4\simeq F_{20}$, 
(iii) $10T7\simeq A_5$,
$10T13\simeq S_5$, 
$10T26\simeq \PSL_2(\bF_9)\simeq A_6$, 
$10T30\simeq \PGL_2(\bF_9)$, 
$10T31\simeq M_{10}$, 
$10T32\simeq S_6$, 
$10T35\simeq {\rm P\Gamma L}_2(\bF_9)$, 
$10T44\simeq A_{10}$, 
$10T45\simeq S_{10}$.
\end{remark}

We organize this paper as follows. 
In Section \ref{sePre}, we prepare some basic tools to prove 
stably and retract rationality of algebraic tori. 
We also give known results about rationality problem for algebraic tori, 
in particular, norm one tori.
In Section \ref{seProof}, we will give the proof of 
Theorem \ref{thmain1} and Theorem \ref{thmain2} 
which are main theorems of this paper. 

\begin{acknowledgments}
The authors would like to thank 
Ming-chang Kang and Shizuo Endo 
for giving them useful and valuable comments. 
\end{acknowledgments}

%
%%%%%%%%%%%%%%%%%%%%%%%%%%%%%%%%%%%%%%%%%%%%%%%%%%%%%%%%%%%%%%%%%%%%%%%%
\section{Preliminaries: rationality problem for algebraic tori and flabby resolution}\label{sePre}

We recall some basic facts of the theory of flabby (flasque) $G$-lattices
(see \cite{CTS77}, \cite{Swa83}, \cite[Chapter 2]{Vos98}, \cite[Chapter 2]{Lor05}, \cite{Swa10}).

\begin{definition}
%[Permutation, stably permutation, invertible, flabby and coflabby $G$-lattices]
Let $G$ be a finite group and $M$ be a $G$-lattice 
(i.e. finitely generated $\bZ[G]$-module which is $\bZ$-free 
as an abelian group). \\
{\rm (i)} $M$ is called a {\it permutation} $G$-lattice 
if $M$ has a $\bZ$-basis permuted by $G$, 
i.e. $M\simeq \oplus_{1\leq i\leq m}\bZ[G/H_i]$ 
for some subgroups $H_1,\ldots,H_m$ of $G$.\\
{\rm (ii)} $M$ is called a {\it stably permutation} $G$-lattice 
if $M\oplus P\simeq P^\prime$ 
for some permutation $G$-lattices $P$ and $P^\prime$.\\
{\rm (iii)} $M$ is called {\it invertible} (or {\it permutation projective}) 
if it is a direct summand of a permutation $G$-lattice, 
i.e. $P\simeq M\oplus M^\prime$ for some permutation $G$-lattice 
$P$ and a $G$-lattice $M^\prime$.\\
{\rm (iv)} $M$ is called {\it flabby} (or {\it flasque}) if $\widehat H^{-1}(H,M)=0$ 
for any subgroup $H$ of $G$ where $\widehat H$ is the Tate cohomology.\\
{\rm (v)} $M$ is called {\it coflabby} (or {\it coflasque}) if $H^1(H,M)=0$
for any subgroup $H$ of $G$.
\end{definition}

\begin{lemma}[Lenstra {\cite[Propositions 1.1 and 1.2]{Len74}, see also Swan 
\cite[Section 8]{Swa83}}]\label{lemSL}
Let $E$ be an invertible $G$-lattice.\\
{\rm (i)} $E$ is flabby and coflabby.\\
{\rm (ii)} If $C$ is a coflabby $G$-lattice, then any short exact sequence
$0 \rightarrow C \rightarrow N \rightarrow E \rightarrow 0$ splits.
\end{lemma}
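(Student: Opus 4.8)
The plan is to reduce both assertions to the case of a single permutation lattice $\bZ[G/H']$ by exploiting the direct-summand definition of invertibility together with the additivity of (Tate) cohomology, and then to evaluate the relevant cohomology groups by Shapiro's lemma. Since $E$ is invertible, I would first fix a permutation $G$-lattice $P$ and a $G$-lattice $M'$ with $P\simeq E\oplus M'$, so that $E$ is a direct summand of $P$; this is the structural input that drives everything.

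For part (i), I would argue that it suffices to prove $P$ is flabby and coflabby. Indeed, $\widehat H^{-1}(H,-)$ and $H^1(H,-)$ are additive on direct sums, so from $P\simeq E\oplus M'$ one gets $\widehat H^{-1}(H,P)\simeq\widehat H^{-1}(H,E)\oplus\widehat H^{-1}(H,M')$ and similarly for $H^1$; hence vanishing for $P$ forces vanishing for the summand $E$. To handle $P\simeq\oplus_i\bZ[G/H_i]$ over an \emph{arbitrary} subgroup $H\leq G$, I would restrict to $H$ and use the Mackey double-coset decomposition to rewrite each $\bZ[G/H_i]$ as a direct sum of permutation $H$-lattices $\bZ[H/K]$. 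By Shapiro's lemma $\widehat H^n(H,\bZ[H/K])\simeq\widehat H^n(K,\bZ)$; taking $n=-1$ gives $\widehat H^{-1}(K,\bZ)=0$ because the norm map on $\bZ$ is multiplication by $|K|$ and hence injective, and taking $n=1$ gives $H^1(K,\bZ)=\Hom(K,\bZ)=0$ since $K$ is finite and $\bZ$ is torsion-free. This yields both flabbiness and coflabbiness of $P$, and therefore of $E$.

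For part (ii), I would observe that the extension $0\to C\to N\to E\to 0$ splits if and only if its class in ${\rm Ext}^1_G(E,C)$ vanishes (equivalently, applying $\Hom_G(E,-)$, the identity $1_E$ lifts along $N\to E$, the obstruction lying in ${\rm Ext}^1_G(E,C)$). Using $P\simeq E\oplus M'$ once more and additivity of ${\rm Ext}$, the group ${\rm Ext}^1_G(E,C)$ is a direct summand of ${\rm Ext}^1_G(P,C)\simeq\oplus_i{\rm Ext}^1_G(\bZ[G/H_i],C)$. By the Eckmann--Shapiro adjunction between induction and restriction, ${\rm Ext}^1_G(\bZ[G/H_i],C)\simeq{\rm Ext}^1_{H_i}(\bZ,C)=H^1(H_i,C)$, which is zero precisely because $C$ is coflabby. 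Hence ${\rm Ext}^1_G(P,C)=0$, so its summand ${\rm Ext}^1_G(E,C)=0$, and the sequence splits.

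The main obstacle is not conceptual but bookkeeping: one must apply Shapiro's lemma and the Eckmann--Shapiro adjunction after restricting to a general subgroup $H\leq G$, which requires the Mackey decomposition to keep the modules in permutation form, rather than working only over the ambient group $G$. Once that restriction step is set up correctly, the cohomological vanishing of $\bZ$ over finite groups closes both arguments immediately.
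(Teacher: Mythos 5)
Your proof is correct, and it is essentially the standard argument of the sources the paper cites for this lemma (Lenstra and Swan): the paper itself gives no proof, and your route---Mackey decomposition plus Shapiro's lemma to show permutation lattices are flabby and coflabby, with summands inheriting both by additivity of Tate cohomology, and the splitting in (ii) via the vanishing of ${\rm Ext}^1_G(E,C)$ through the Eckmann--Shapiro isomorphism ${\rm Ext}^1_G(\bZ[G/H_i],C)\simeq H^1(H_i,C)$---is precisely how those references proceed. No gaps; the one step worth being explicit about in a write-up is that induction and coinduction coincide for finite groups, which is what makes both the Tate-cohomology Shapiro isomorphism and the Ext adjunction apply to $\bZ[G/H_i]={\rm Ind}_{H_i}^G\bZ$.
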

%%%%%%%%%%%%%%%%%%%%%%%%%%

\begin{definition}[{see \cite[Section 1]{EM75}, \cite[Section 4.7]{Vos98}}]
Let $\cC(G)$ be the category of all $G$-lattices. 
Let $\cS(G)$ be the full subcategory of $\cC(G)$ of all permutation $G$-lattices 
and $\cD(G)$ be the full subcategory of $\cC(G)$ of all invertible $G$-lattices.
Let 
\begin{align*}
\cH^i(G)=\{M\in \cC(G)\mid \widehat H^i(H,M)=0\ {\rm for\ any}\ H\leq G\}\ (i=\pm 1)
\end{align*}
be the class of ``$\widehat H^i$-vanish'' $G$-lattices 
where $\widehat H^i$ is the Tate cohomology. 
Then we have the inclusions 
$\cS(G)\subset \cD(G)\subset \cH^i(G)\subset \cC(G)$ $(i=\pm 1)$. 
\end{definition}

\begin{definition}\label{defCM}
We say that two $G$-lattices $M_1$ and $M_2$ are {\it similar} 
if there exist permutation $G$-lattices $P_1$ and $P_2$ such that 
$M_1\oplus P_1\simeq M_2\oplus P_2$. 
We denote the similarity class of $M$ by $[M]$. 
The set of similarity classes $\cC(G)/\cS(G)$ becomes a 
commutative monoid 
(with respect to the sum $[M_1]+[M_2]:=[M_1\oplus M_2]$ 
and the zero $0=[P]$ where $P\in \cS(G)$). 
\end{definition}

\begin{theorem}[Endo and Miyata {\cite[Theorem 3.3]{EM75}, Endo and Kang {\cite[Theorem 1.4]{EK17}}}]
Let $G$ be a finite group. 
Then the following conditions are equivalent:\\
{\rm (i)} The commutative monoid $\cH^{-1}(G)/\cS(G)$ is a finite group;\\
{\rm (ii)} $G=C_n$, $G=D_m$ $(m\geq 3: odd)$, 
$G=C_{q^f}\times D_m$ 
$(q: odd\ prime,\ f\geq 1,\ m\geq 3: odd,\ {\rm gcd}\{q,m\}=1)$ 
where $(\bZ/q^f\bZ)^\times=\langle \overline{p}\rangle$
for any prime divisor $p$ of $m$, or 
$G=Q_{4m}$ $(m\geq 3: odd)$ where $p\equiv 3\pmod{4}$ for
any prime divisor $p$ of $m$. %is congruent to $3$ modulo $4$.
%{\rm (1)} cyclic group, 
%{\rm (2)} dihedral group of order $2m$ $(m\ {\rm odd})$, 
%{\rm (3)} a direct product of a cyclic group of order $q^f$, 
%$q$ an odd prime number , $f\geq 1$ 
%and a dihedral group of order $2m$, $m$ odd, where
% each prime divisor of $m$ is a primitive $q^{f-1}(q-1)$-th 
% root of unity modulo $q^f$, or 
%{\rm (4)} a generalized quaternion group of order $4m$ where 
%$m$ odd, where each prime divisor of $m$ is congruent to $3$ modulo $4$.
\end{theorem}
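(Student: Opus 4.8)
The plan is to separate the two assertions hidden in ``finite group'': first to characterize when $\cH^{-1}(G)/\cS(G)$ is a group at all, and then, inside that class, to pin down finiteness by arithmetic. The first step is to reduce the group property to an intrinsic condition on lattices, namely that \emph{$\cH^{-1}(G)/\cS(G)$ is a group if and only if every flabby $G$-lattice is invertible}, i.e. $\cH^{-1}(G)=\cD(G)$ up to similarity. One direction is automatic from the chain $\cS(G)\subset\cD(G)\subset\cH^{-1}(G)$: if $M$ is invertible, write $M\oplus M'\simeq P$ with $P$ permutation; then $M'$, being a direct summand of the flabby lattice $P$, is flabby, and $[M]+[M']=0$, so $\cD(G)/\cS(G)$ is always a group. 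For the converse, if $[M]$ with $M$ flabby admits an inverse $[N]$, then $M\oplus N$ is stably permutation, whence $M$ is a direct summand of a permutation lattice, i.e. invertible. Lemma \ref{lemSL} (invertible $\Rightarrow$ flabby, and splitting of coflabby-by-invertible extensions) keeps this bookkeeping consistent, so the monoid is a group exactly when there is no flabby, non-invertible lattice.

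Next I would extract the group-theoretic skeleton. Invertibility is a Sylow-local property, so the existence of a flabby non-invertible lattice can be localized Sylow-by-Sylow. Already for $C_p\times C_p$ there is a classical flabby lattice that is not invertible, and this obstruction propagates to $G$: if some Sylow subgroup of $G$ is noncyclic the monoid fails to be a group. Hence the group property forces every Sylow subgroup of $G$ to be cyclic, so that $G$ is a $Z$-group (all Sylow subgroups cyclic, hence metacyclic). This already explains why each entry of the list---$C_n$, $D_m$, $C_{q^f}\times D_m$, $Q_{4m}$---has only cyclic Sylow subgroups.

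Finally, within the $Z$-groups I would compute $\cH^{-1}(G)/\cS(G)=\cD(G)/\cS(G)$ explicitly and read off finiteness. For $G=C_n$ the flabby classes are governed by the genera of lattices over $\bZ[\zeta_d]$ for $d\mid n$, and the monoid is a finite group assembled from the relevant ideal-class groups. For the metacyclic families one descends along the cyclic complement acting on $\bZ[\zeta_p]$: the monoid becomes a subquotient of class-group data of the fixed rings, and the precise side conditions in the statement---$(\bZ/q^f\bZ)^\times=\langle\overline p\rangle$ for every prime divisor $p$ of $m$, respectively $p\equiv 3\pmod 4$ for every prime divisor $p$ of $m$ in the dicyclic case $Q_{4m}$---are exactly the hypotheses that force the requisite units to generate, collapsing the class data to something finite and simultaneously making every flabby lattice invertible. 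Assembling these computations over the four families yields the list in (ii), and conversely shows that no other group qualifies.

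The delicate point, and the main obstacle, is the necessity direction among $Z$-groups whose Sylow subgroups are all cyclic but which are \emph{not} on the list---most notably the Frobenius groups $C_p\rtimes C_m$ with $m\geq 3$. For these one must show that $\cH^{-1}(G)/\cS(G)$ fails to be a finite group, which requires either exhibiting a flabby lattice that is not invertible or producing an infinite family of pairwise non-similar invertible flabby classes; this integral-representation input, together with the exact matching of the arithmetic side conditions to finiteness in the $C_{q^f}\times D_m$ and $Q_{4m}$ cases, is where the real work lies.
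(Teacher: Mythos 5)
This theorem is quoted in the paper with citations to \cite{EM75} and \cite{EK17} but is not proved there, so your attempt can only be measured against the cited sources, and against them it is an outline rather than a proof. Your first reduction --- the monoid $\cH^{-1}(G)/\cS(G)$ is a group if and only if every flabby $G$-lattice is invertible --- is correct and is exactly (iii)$\Leftrightarrow$(iv) of the Endo--Miyata theorem already quoted in Section 2 of this paper. But your proposed proof that a noncyclic Sylow subgroup obstructs the group property has a genuine hole: a noncyclic $p$-group contains $C_p\times C_p$ \emph{except} when $p=2$ and the group is generalized quaternion $Q_{2^n}$, whose abelian subgroups are all cyclic. So your ``localize to $C_p\times C_p$ and induce up'' argument (the induction step itself is fine, via $M\mid \mathrm{Res}^G_H\mathrm{Ind}^G_H M$) cannot exclude quaternion Sylow $2$-subgroups such as $Q_8$; there a flabby non-invertible lattice must be produced by a different and harder argument --- compare Theorem \ref{thEM82}, which shows the quaternion case is precisely where flabby-and-coflabby lattices can fail to be invertible. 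There is also a conceptual slip at the end: the arithmetic side conditions in (ii) do \emph{not} ``simultaneously make every flabby lattice invertible''; that holds for \emph{every} group with all Sylow subgroups cyclic (e.g.\ for $C_{q^f}\times D_m$ without the primitive-root condition, and for all $F_{pm}$), again by the quoted Endo--Miyata theorem. The side conditions govern finiteness only.

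The larger gap is that everything beyond ``group $\Leftrightarrow$ all Sylow subgroups cyclic'' --- which is the entire content of this theorem relative to results already quoted in the paper --- is asserted rather than proved. You correctly identify that one must compute $\cD(G)/\cS(G)$ for $Z$-groups, that for $G=C_n$ it is governed by class-group data over the rings $\bZ[\zeta_d]$, $d\mid n$ (cf.\ Swan \cite{Swa10}), and that the crux is infiniteness for the $Z$-groups not on the list, notably $C_p\rtimes C_m$ with $m\geq 3$. But you supply no mechanism for any of this: no identification of $\cD(G)/\cS(G)$ with locally free class groups or genus-theoretic invariants of the relevant $\bZ$-orders, no construction of infinitely many pairwise non-similar invertible flabby classes in the excluded cases, and no derivation of the conditions $(\bZ/q^f\bZ)^\times=\langle\overline{p}\rangle$ and $p\equiv 3\pmod 4$ --- which in \cite{EM75} and \cite{EK17} emerge from precisely such computations (roughly, they force the decomposition behavior of primes dividing $m$ in the relevant cyclotomic data that collapses an otherwise infinite class-group contribution). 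As you yourself note, ``this is where the real work lies''; since none of it is carried out, the proposal establishes only what the paper already quotes from Endo--Miyata, and the finiteness dichotomy of the statement remains unproved.
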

\begin{theorem}[{Endo and Miyata \cite[Lemma 1.1]{EM75}, Colliot-Th\'el\`ene and Sansuc \cite[Lemma 3]{CTS77}, 
see also \cite[Lemma 8.5]{Swa83}, \cite[Lemma 2.6.1]{Lor05}}]\label{thEM}
For any $G$-lattice $M$,
there exists a short exact sequence of $G$-lattices
$0 \rightarrow M \rightarrow P \rightarrow F \rightarrow 0$
where $P$ is permutation and $F$ is flabby.
\end{theorem}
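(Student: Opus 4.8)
The plan is to prove the statement by passing to $\bZ$-duals $M^\circ=\Hom_\bZ(M,\bZ)$, where the flabbiness condition $\widehat{H}^{-1}(H,F)=0$ becomes the more tractable coflabbiness condition $H^1(H,F^\circ)=0$. First I would record three elementary facts. The dual of a permutation lattice is again permutation, since the dual of a $G$-permuted basis is itself $G$-permuted, so $\bZ[G/H]^\circ\cong\bZ[G/H]$. Next, permutation lattices are coflabby: by Shapiro's lemma $H^1(H,\bZ[G/H'])$ is a direct sum of groups $H^1(\,\cdot\,,\bZ)=0$. Finally, a lattice $F$ is flabby if and only if $F^\circ$ is coflabby, because $\widehat{H}^{-1}(H,F)$ and $H^1(H,F^\circ)$ are Pontryagin dual finite groups. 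Granting these, it suffices to construct a short exact sequence $0\to N\to P'\to M^\circ\to 0$ with $P'$ permutation and $N$ coflabby; dualizing then yields $0\to M\to (P')^\circ\to N^\circ\to 0$ with $P:=(P')^\circ$ permutation and $F:=N^\circ$ flabby, as required.

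To build such a sequence I would construct a surjection $\pi\colon P'\to M^\circ$ from a permutation lattice that is surjective on $H$-fixed points for \emph{every} subgroup $H\leq G$ simultaneously. The key is the adjunction $\Hom_G(\bZ[G/H],M^\circ)\cong (M^\circ)^H$, sending a $G$-homomorphism to the image of the fixed coset $eH$; note that $eH\in(\bZ[G/H])^H$. For each conjugacy class of subgroups $H\leq G$ I would choose a finite generating set $m_{H,1},\ldots,m_{H,r_H}$ of the finitely generated abelian group $(M^\circ)^H$, realize each generator by a map $\bZ[G/H]\to M^\circ$ carrying $eH$ to it, and take the direct sum over all classes and all generators to form $\pi\colon P'=\bigoplus_H\bigoplus_j \bZ[G/H]\to M^\circ$. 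Taking $H=\{1\}$ shows $\pi$ is surjective, and by construction each generator $m_{H,j}$ of $(M^\circ)^H$ is the image of the $H$-fixed element $eH\in(P')^H$, so $(P')^H\to (M^\circ)^H$ is surjective for every $H$.

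It then remains to check that $N:=\ker\pi$ is coflabby. For each $H\leq G$ the long exact cohomology sequence of $0\to N\to P'\to M^\circ\to 0$ contains $(P')^H\to (M^\circ)^H\xrightarrow{\delta} H^1(H,N)\to H^1(H,P')$. Surjectivity of the first map forces the connecting map $\delta$ to vanish, so $H^1(H,N)$ injects into $H^1(H,P')=0$; hence $H^1(H,N)=0$ for all $H$, i.e. $N$ is coflabby, and the proof concludes by dualizing as above. The main obstacle is precisely arranging surjectivity of $(P')^H\to(M^\circ)^H$ for all $H$ at once: a single naive surjection from a permutation lattice, such as $\bZ[G]^r\twoheadrightarrow M^\circ$, need not remain surjective on higher fixed-point subgroups, which is exactly why the construction must include one permutation summand $\bZ[G/H]$ for every subgroup $H$ and every generator of $(M^\circ)^H$.
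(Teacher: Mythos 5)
Your proposal is correct and follows essentially the same route as the paper's sources: the paper states this theorem by citation, and the cited proofs (Colliot-Th\'el\`ene--Sansuc, Lemme 3; Swan, Lemma 8.5; Lorenz, Lemma 2.6.1) proceed exactly as you do, building a coflabby resolution $0\to N\to \bigoplus_{H}\bZ[G/H]^{\oplus r_H}\to M^\circ\to 0$ whose surjection is surjective on $H$-fixed points for every $H$ (one summand $\bZ[G/H]$ per generator of $(M^\circ)^H$, over conjugacy class representatives), and then dualizing via the self-duality of permutation lattices and the duality between $\widehat H^{-1}(H,F)$ and $H^1(H,F^\circ)$. All the supporting facts you invoke (Mackey plus Shapiro for $H^1(H,\bZ[G/H'])=0$, exactness of $\Hom_\bZ(-,\bZ)$ on sequences of lattices, and the vanishing of the connecting map) are handled correctly, so there is no gap.
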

\begin{definition}\label{defFlabby}
The exact sequence $0 \rightarrow M \rightarrow P \rightarrow F \rightarrow 0$ 
as in Theorem \ref{thEM} is called a {\it flabby resolution} of the $G$-lattice $M$.
$\rho_G(M)=[F] \in \cC(G)/\cS(G)$ is called {\it the flabby class} of $M$,
denoted by $[M]^{fl}=[F]$.
Note that $[M]^{fl}$ is well-defined: 
if $[M]=[M^\prime]$, $[M]^{fl}=[F]$ and $[M^\prime]^{fl}=[F^\prime]$
then $F \oplus P_1 \simeq F^\prime \oplus P_2$
for some permutation $G$-lattices $P_1$ and $P_2$,
and therefore $[F]=[F^\prime]$ (cf. \cite[Lemma 8.7]{Swa83}). 
We say that $[M]^{fl}$ is {\it invertible} if 
$[M]^{fl}=[E]$ for some invertible $G$-lattice $E$. 
\end{definition}

For $G$-lattice $M$, 
it is not difficult to see 
\begin{align*}
\textrm{permutation}\ \ 
\Rightarrow\ \ 
&\textrm{stably\ permutation}\ \ 
\Rightarrow\ \ 
\textrm{invertible}\ \ 
\Rightarrow\ \ 
\textrm{flabby\ and\ coflabby}\\
&\hspace*{8mm}\Downarrow\hspace*{34mm} \Downarrow\\
&\hspace*{7mm}[M]^{fl}=0\hspace*{10mm}\Rightarrow\hspace*{5mm}[M]^{fl}\ 
\textrm{is\ invertible}.
\end{align*}

The above implications in each step cannot be reversed 
(see, for example, \cite[Section 1]{HY17}). 

Let $L/k$ be a finite Galois extension with Galois group $G={\rm Gal}(L/k)$ 
and $M$ be a $G$-lattice. 
The flabby class $\rho_G(M)=[M]^{fl}$ 
plays crucial role in the rationality problem for 
$L(M)^G$ as follows (see Voskresenskii's fundamental book \cite[Section 4.6]{Vos98} and Kunyavskii \cite{Kun07}, see also e.g. Swan \cite{Swa83}, 
Kunyavskii \cite[Section 2]{Kun90}, 
Lemire, Popov and Reichstein \cite[Section 2]{LPR06}, 
Kang \cite{Kan12}, 
Yamasaki \cite{Yam12}):  
\begin{theorem}[Endo and Miyata, Voskresenskii, Saltman]\label{thEM73}
Let $L/k$ be a finite Galois extension with Galois group $G={\rm Gal}(L/k)$. 
Let $M$ and $M^\prime$ be $G$-lattices.\\
{\rm (i)} $(${\rm Endo and Miyata} \cite[Theorem 1.6]{EM73}$)$ 
$[M]^{fl}=0$ if and only if $L(M)^G$ is stably $k$-rational.\\
{\rm (ii)} $(${\rm Voskresenskii} \cite[Theorem 2]{Vos74}$)$ 
$[M]^{fl}=[M^\prime]^{fl}$ if and only if $L(M)^G$ and $L(M^\prime)^G$ 
are stably $k$-isomorphic.\\
%i.e. there exist algebraically independent 
%elements $x_1,\ldots,x_m$ over $L(M)^G$ and 
%$y_1,\ldots,y_n$ over $L(M^\prime)^G$ such that 
%$L(M)^G(x_1,\ldots,x_m)\simeq L(M^\prime)^G(y_1,\ldots,y_n)$.\\
{\rm (iii)} $(${\rm Saltman} \cite[Theorem 3.14]{Sal84}$)$ 
$[M]^{fl}$ is invertible if and only if $L(M)^G$ is 
retract $k$-rational.
\end{theorem}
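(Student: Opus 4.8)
The three parts share a common engine, so the plan is to reduce everything to purely lattice-theoretic statements via two foundational observations and then handle the homological bookkeeping. First, if $P$ is a permutation $G$-lattice, say $P\simeq\oplus_i\bZ[G/H_i]$, then $L(P)^G$ is $k$-rational: the torus with character lattice $P$ is a product of Weil restrictions $R_{L^{H_i}/k}(\bG_m)$, which is an open subvariety of affine space. Second, and crucially, I would establish the \emph{torsor lemma}: if $0\to M_1\to M_2\to M_3\to 0$ is exact with $M_3$ permutation, then $L(M_2)^G$ is rational over $L(M_1)^G$. Dually this sequence reads $1\to T_3\to T_2\to T_1\to 1$ with $T_3$ quasi-split, so $T_2\to T_1$ is a $T_3$-torsor; since $H^1(k',T_3)=0$ for every field $k'$ by Hilbert 90, the generic fibre is trivial and $L(M_2)^G=k(T_2)$ is rational over $k(T_1)=L(M_1)^G$, of relative dimension $\rank M_3$, the fibre being the rational variety $T_3$.

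With these in hand the ``if'' directions of (i) and (ii) become an explicit tower construction. For (ii), suppose $[M]^{fl}=[M^\prime]^{fl}$; after adding permutation lattices to flabby resolutions (Theorem \ref{thEM}) $0\to M\to P\to F\to 0$ and $0\to M^\prime\to P^\prime\to F^\prime\to 0$ I may assume $F\oplus Q\simeq F^\prime\oplus Q^\prime=:\widetilde F$ with $Q,Q^\prime$ permutation, giving resolutions $0\to M\to\widetilde P\to\widetilde F\to 0$ and $0\to M^\prime\to\widetilde P^\prime\to\widetilde F\to 0$ with $\widetilde P,\widetilde P^\prime$ permutation. I then form the pullback $N=\widetilde P\times_{\widetilde F}\widetilde P^\prime$, which sits in two exact sequences $0\to M\to N\to\widetilde P^\prime\to 0$ and $0\to M^\prime\to N\to\widetilde P\to 0$ whose quotients are permutation. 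By the torsor lemma $L(N)^G$ is rational over both $L(M)^G$ and $L(M^\prime)^G$, so the latter two are stably $k$-isomorphic. Part (i) is the case $M^\prime=0$, where $L(0)^G=k$: a vanishing flabby class yields $0\to M\to\widetilde P\to P_1\to 0$ with $P_1$ permutation, whence $L(\widetilde P)^G$ (which is $k$-rational) is rational over $L(M)^G$, forcing $L(M)^G$ to be stably $k$-rational.

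The ``only if'' directions are the main obstacle, since they require that the flabby class be a genuine invariant of $L(M)^G$ up to stable $k$-isomorphism. Here I would invoke smooth compactifications: for the torus $T=T_M$ choose a smooth projective $G$-equivariant model $X$, and show that the $G$-lattice $\mathrm{Pic}(\overline X)$ is flabby with $[\mathrm{Pic}(\overline X)]=[M]^{fl}$ in $\cC(G)/\cS(G)$. The point is that $\mathrm{Pic}(\overline X)$, modulo permutation summands, is a stable birational invariant; since a $k$-isomorphism $L(M)^G\simeq L(M^\prime)^G$ (after adjoining $G$-invariant transcendentals, i.e.\ permutation summands that do not alter the flabby class) is a birational isomorphism of the corresponding tori, it forces $[M]^{fl}=[M^\prime]^{fl}$, giving (ii), and (i) follows by comparison with $k$. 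This invariance is exactly the delicate step: one must control how $\mathrm{Pic}(\overline X)$ behaves under blow-ups and products with projective space, which is where the bulk of the work (Voskresenskii's argument, or the original algebraic analysis of Endo and Miyata through the functors $\widehat H^{\pm1}(G,-)$ on flabby lattices) resides.

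Finally, for (iii) I would exploit the splitting property of invertible lattices in Lemma \ref{lemSL}. For the ``if'' direction, if $[M]^{fl}=[E]$ with $E$ invertible, write a permutation lattice as $P_0\simeq E\oplus E^\prime$; combining this splitting with the flabby resolution of $M$ and Lemma \ref{lemSL}(ii) (any extension of an invertible lattice by a coflabby one splits) produces the $k$-algebra $R$ together with maps $\varphi\colon R\to k[x_1,\dots,x_n][1/f]$ and $\psi\colon k[x_1,\dots,x_n][1/f]\to R$ satisfying $\psi\circ\varphi=1_R$, which is the defining data of retract $k$-rationality. For the converse, such a retraction yields, after passing to the associated lattices, a splitting exhibiting the flabby part $F$ as a direct summand of a permutation lattice, so $[M]^{fl}=[F]$ is invertible. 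As with (i) and (ii), the substantive content is the translation between the field-theoretic retraction and the lattice-level splitting, while the homological consequences then follow formally from Lemma \ref{lemSL}.
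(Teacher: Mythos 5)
The paper does not actually prove Theorem \ref{thEM73}: it is imported as background, with (i) cited to Endo--Miyata, (ii) to Voskresenskii, and (iii) to Saltman, so there is no in-paper argument to compare yours against line by line. Judged on its own, your outline reproduces the classical architecture from exactly those sources, and the ``if'' halves of (i) and (ii) are essentially complete as you state them: the torsor lemma is correct (quasi-trivial tori have vanishing $H^1$ over every field by Shapiro plus Hilbert 90, and a quasi-trivial torus over any field is an open subset of affine space, so $L(M_2)^G$ is purely transcendental over $L(M_1)^G$ when the quotient lattice is permutation), and the fiber product $N=\widetilde P\times_{\widetilde F}\widetilde P^\prime$ of two flabby resolutions arranged to have a common cokernel is precisely the device of Colliot-Th\'el\`ene--Sansuc and Swan; your verification that $N$ sits in two exact sequences with permutation quotients, hence that $L(M)^G$ and $L(M^\prime)^G$ are stably $k$-isomorphic, is sound, as is the specialization to (i).

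Two genuine gaps remain, both of which you flag but neither of which you close. For the ``only if'' halves of (i) and (ii), invoking $[\mathrm{Pic}(X_L)]=[M]^{fl}$ for a smooth projective model $X$ of the torus and its stable birational invariance is not a reduction but the entire content of Voskresenskii's theorem: one needs the existence of a smooth projective equivariant compactification (at the time, resolution of singularities in characteristic zero; in general a separate theorem), the identification of $\mathrm{Pic}(X_L)$ as a flabby lattice representing $[M]^{fl}$ via the divisor sequence $0\to M\to \mathrm{Div}_{X_L\setminus T_L}(X_L)\to \mathrm{Pic}(X_L)\to 0$ with permutation middle term, and the blow-up/product analysis you mention only by name. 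More seriously, (iii) is asserted rather than proved in both directions. Lemma \ref{lemSL}(ii) splits extensions of invertible lattices by coflabby ones, but it does not by itself ``produce the $k$-algebra $R$ with maps $\varphi,\psi$'': Saltman's proof passes through his characterization of retract rationality by a lifting property for torsors over local $k$-algebras, with invertibility of $[M]^{fl}$ entering through the vanishing of $H^1$ of invertible lattices over local rings (Shapiro and Hilbert 90 again in a relative form), and the converse likewise extracts invertibility from that lifting property; your phrase ``after passing to the associated lattices'' names no actual mechanism by which a field-theoretic retraction splits the flabby quotient. So the skeleton and the easy halves are right, but the two hard steps are citations in disguise rather than arguments.
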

\begin{theorem}[{Endo and Miyata \cite[Theorem 1.5]{EM75}, see also Theorem \ref{th13-1}}]
Let $G$ be a finite group. 
Then the following conditions are equivalent:\\
{\rm (i)} $[J_G]^{fl}$ is invertible;\\
{\rm (ii)} all the Sylow subgroups of $G$ are cyclic;\\
{\rm (iii)} $\cH^{-1}(G)=\cH^1(G)=\cD(G)$, i.e. 
any flabby $($resp. coflabby$)$ $G$-lattice is invertible;\\
{\rm (iv)} the commutative monoid $\cH^{-1}(G)/\cS(G)$ is a group.
\end{theorem}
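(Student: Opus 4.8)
The plan is to prove the four conditions equivalent by first extracting (i) $\Leftrightarrow$ (ii) from the rationality dictionary, and then closing the chain (ii) $\Rightarrow$ (iii) $\Rightarrow$ (iv) $\Rightarrow$ (i), so that the genuine lattice-theoretic work is concentrated in (ii) $\Rightarrow$ (iii). For (i) $\Leftrightarrow$ (ii), I would realize the abstract group $G$ as ${\rm Gal}(L/k)$ for some Galois extension $L/k$ (always possible); then $L(J_G)^G$ is the function field of the norm one torus $R^{(1)}_{L/k}(\bG_m)$, whose character lattice is $J_G=J_{G/1}$. By Theorem \ref{thEM73}(iii), $[J_G]^{fl}$ is invertible if and only if $L(J_G)^G$ is retract $k$-rational, and by Theorem \ref{th13-1} this holds if and only if every Sylow subgroup of $G$ is cyclic. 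Since condition (i) is intrinsic to the $G$-lattice $J_G$, any such $L/k$ serves, and we obtain (i) $\Leftrightarrow$ (ii).

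The two monoid steps are formal. For (iii) $\Rightarrow$ (iv): condition (iii) gives $\cH^{-1}(G)=\cD(G)$, hence $\cH^{-1}(G)/\cS(G)=\cD(G)/\cS(G)$, and the latter is a group because any invertible $E$ sits in $E\oplus E'\simeq P$ with $P$ permutation, so that $[E]+[E']=[P]=0$ and $[E]$ is a unit. For (iv) $\Rightarrow$ (i): by the flabby resolution of Theorem \ref{thEM} we have $[J_G]^{fl}=[F]$ with $F$ flabby, so $[F]\in\cH^{-1}(G)/\cS(G)$; if this monoid is a group then $[F]$ has an inverse $[F']$ with $F'$ flabby, and unwinding $[F]+[F']=0$ gives $F\oplus F'\oplus P_1\simeq P_2$ with $P_1,P_2$ permutation. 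Thus $F$ is a direct summand of the permutation lattice $P_2$, hence invertible, and $[J_G]^{fl}=[F]$ is invertible.

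The heart is (ii) $\Rightarrow$ (iii). Assume all Sylow subgroups of $G$ cyclic and let $M$ be flabby. I would use two ingredients. First, a Sylow-localization principle: a flabby $G$-lattice $F$ is invertible if and only if ${\rm Res}^G_P F$ is invertible for every Sylow subgroup $P$; the nonformal direction assembles a global splitting from the local ones over the various $P$, using ${\rm cor}^G_P\circ{\rm res}^G_P=[G:P]$ on cohomology, the global coprimality of the indices $[G:P]$, and Lenstra's splitting lemma (Lemma \ref{lemSL}). Since restriction preserves flabbiness, this reduces the claim to cyclic groups. Second, the cyclic base case: over a cyclic group $C$ every flabby lattice is invertible. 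This I would read off from the earlier Endo--Miyata and Endo--Kang theorem, which gives that $\cH^{-1}(C)/\cS(C)$ is a finite group for $C=C_n$; being a group, every flabby class has an inverse and hence is a summand of a permutation lattice, i.e. invertible. Combining the two, ${\rm Res}^G_P M$ is flabby over the cyclic group $P$, hence invertible, so $M$ is invertible and $\cH^{-1}(G)=\cD(G)$. Finally, the duality $M\mapsto M^\circ$ interchanges $\widehat{H}^{-1}$ and $\widehat{H}^{1}$ and preserves invertibility (permutation lattices being self-dual), so $\cH^{1}(G)=\cD(G)$ as well; this is (iii).

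The step I expect to be the main obstacle is the nonformal direction of the Sylow-localization principle, where one must actually manufacture the permutation summand from the Sylow-local data rather than merely check cohomology vanishing. A secondary concern is that the clean route above leans on earlier results of the same provenance (Theorem \ref{th13-1} and the monoid-finiteness theorem); should one want a development independent of them, the cyclic base case must be proved directly (elementary for $C_p$, but requiring the integral representation theory of $\bZ[C_{p^k}]$ for $k\geq 2$), and (i) $\Rightarrow$ (ii) must be argued by restriction: the isomorphism ${\rm Res}^G_P J_G\simeq J_P\oplus\bZ[P]^{[G:P]-1}$, obtained from $0\to\bZ\to\bZ[G]\to J_G\to 0$ by the $P$-automorphism of $\bZ[P]^{[G:P]}$ collapsing the diagonal norm element to one coordinate, gives $[{\rm Res}^G_P J_G]^{fl}=[J_P]^{fl}$ and reduces matters to showing that $[J_P]^{fl}$ is non-invertible for a non-cyclic $p$-group $P$, the delicate core being the obstruction at $C_p\times C_p$.
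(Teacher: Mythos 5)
The paper never proves this theorem: it is stated as a quotation from Endo--Miyata \cite[Theorem 1.5]{EM75}, and the bracketed cross-reference ``see also Theorem~\ref{th13-1}'' shows the paper treats it simply as the lattice-theoretic reformulation of that quoted rationality result. So your proposal can only be measured against the literature, and on that footing it is essentially correct. The chain (i) $\Leftrightarrow$ (ii), (ii) $\Rightarrow$ (iii) $\Rightarrow$ (iv) $\Rightarrow$ (i) does close the equivalence; the two monoid steps are exactly right (unwinding $[F]+[F']=0$ to $F\oplus F'\oplus P_1\simeq P_2$ exhibits $F$ as a summand of a permutation lattice); and the Sylow-localization principle at the heart of (ii) $\Rightarrow$ (iii) is a true, standard lemma that completes along precisely the lines you sketch: dualizing a flabby resolution of $M^\circ$ (Theorem~\ref{thEM}) gives $0\rightarrow F^\circ\rightarrow P^\circ\rightarrow M\rightarrow 0$ with $F^\circ$ coflabby and $P^\circ$ permutation; the splitting obstruction lies in ${\rm Ext}^1_{\bZ[G]}(M,F^\circ)\simeq H^1(G,\Hom_\bZ(M,F^\circ))$; Lemma~\ref{lemSL}(ii) kills its restriction to each Sylow subgroup because ${\rm Res}\,M$ is invertible and ${\rm Res}\,F^\circ$ coflabby; and injectivity of restriction on $p$-primary components (from ${\rm cor}\circ{\rm res}=[G:P]$) then kills the class globally, so $M$ is a summand of $P^\circ$. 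The duality step giving $\cH^1(G)=\cD(G)$ from $\cH^{-1}(G)=\cD(G)$ is likewise correct, since $M\mapsto M^\circ$ interchanges flabby and coflabby and preserves invertibility.

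Three provenance caveats, the first two of which you flag yourself. First, deriving (i) $\Leftrightarrow$ (ii) from Theorem~\ref{thEM73}(iii) and Theorem~\ref{th13-1} is circular relative to the source: Theorem~\ref{th13-1} \emph{is} Endo--Miyata's Theorem 1.5 (together with Saltman), so within this paper your step merely witnesses the stated cross-reference rather than proving either formulation. Second, the cyclic base case drawn from the Endo--Miyata/Endo--Kang monoid theorem inverts the historical order of dependence --- in \cite{EM75} the finiteness of $\cH^{-1}(C_n)/\cS(C_n)$ rests on the present theorem --- so an independent treatment must prove invertibility of flabby $C_n$-lattices directly from the integral representation theory of $\bZ[C_n]$, as you note. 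Third, one genuine slip in your closing aside: after reducing (i) $\Rightarrow$ (ii) to a non-cyclic Sylow $p$-subgroup $P$, such a $P$ either contains $C_p\times C_p$ or is generalized quaternion. The $C_p\times C_p$ case is indeed handled by $H^1(H,[J_G]^{fl})\simeq H^3(H,\bZ)\neq 0$ together with ``invertible $\Rightarrow$ coflabby'' (Lemma~\ref{lemSL}(i)), but for generalized quaternion Sylow subgroups one has $H^3(H,\bZ)=0$ for every subgroup $H$ (Theorem~\ref{thCTS77}), so there $[J_G]^{fl}$ is flabby \emph{and} coflabby and a finer obstruction is needed --- historically the delicate point, settled by Endo--Miyata and Saltman. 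Thus ``the delicate core being the obstruction at $C_p\times C_p$'' understates the problem; this does not affect your main argument, which routes (i) $\Rightarrow$ (ii) through Theorem~\ref{th13-1} instead.
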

\begin{theorem}[{Colliot-Th\'{e}l\`{e}ne and Sansuc \cite[Corollaire 1]{CTS77}}]
\label{thCTS77}
Let $G$ be a finite group. 
Then the following conditions are equivalent:\\
{\rm (i)} $[J_G]^{fl}$ is coflabby;\\
{\rm (ii)} any Sylow subgroup of $G$ is cyclic or generalized quaternion 
$Q_{4n}$ of order $4n$ $(n\geq 2)$;\\
{\rm (iii)} any abelian subgroup of $G$ is cyclic;\\
{\rm (iv)} $H^3(H,\bZ)=0$ for any subgroup $H$ of $G$.
\end{theorem}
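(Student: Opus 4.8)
The plan is to treat $(\mathrm{ii})\Leftrightarrow(\mathrm{iii})\Leftrightarrow(\mathrm{iv})$ as a purely group-theoretic statement and to link it to $(\mathrm{i})$ through the single cohomological identity that, for a flabby resolution $0\to J_G\to P\to F\to 0$ with $[J_G]^{fl}=[F]$ and any $H\le G$,
\begin{align*}
\widehat H^1(H,F)\ \hookrightarrow\ \widehat H^2(H,J_G)\ \cong\ H^3(H,\bZ)\qquad (H\le G).
\end{align*}
Here the injection comes from the long exact sequence of the resolution together with $\widehat H^1(H,P)=0$ (permutation lattices have vanishing $H^1$, by Shapiro and $\Hom(-,\bZ)=0$ on finite groups), and the isomorphism on the right is dimension shifting in $0\to\bZ\to\bZ[G]\to J_G\to 0$, using that $\bZ[G]$ restricts to a free, hence cohomologically trivial, module over $H$. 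Granting this, $(\mathrm{iv})$ at once forces $\widehat H^1(H,F)=0$ for all $H$, i.e.\ $F$ is coflabby, which is $(\mathrm{i})$; so $(\mathrm{iv})\Rightarrow(\mathrm{i})$ is immediate and the real work is the converse. I would organize everything as the group-theoretic chain $(\mathrm{ii})\Leftrightarrow(\mathrm{iii})\Leftrightarrow(\mathrm{iv})$ together with $(\mathrm{iv})\Rightarrow(\mathrm{i})$ and $(\mathrm{i})\Rightarrow(\mathrm{iv})$.

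For the group-theoretic chain: $(\mathrm{iii})\Rightarrow(\mathrm{iv})$ because if every abelian subgroup of $G$ is cyclic then the same holds inside each $H\le G$, so every Sylow subgroup of $H$ is cyclic or generalized quaternion; the $p$-part of the Schur multiplier $H^3(H,\bZ)$ injects into that of a Sylow $p$-subgroup by the restriction-transfer argument, and cyclic and generalized quaternion groups have trivial Schur multiplier, whence $H^3(H,\bZ)=0$. Conversely $\neg(\mathrm{iii})$ produces a non-cyclic abelian subgroup, hence a copy of $C_p\times C_p$, and $H^3(C_p\times C_p,\bZ)\cong\bZ/p\neq 0$ gives $\neg(\mathrm{iv})$. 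Finally $(\mathrm{ii})\Leftrightarrow(\mathrm{iii})$ is the classical classification of $p$-groups all of whose abelian subgroups are cyclic (cyclic if $p$ is odd, cyclic or generalized quaternion if $p=2$), applied one prime at a time, since an abelian group is cyclic iff each of its Sylow components is.

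The heart is $(\mathrm{i})\Rightarrow(\mathrm{iv})$, which I would prove by contraposition. If $(\mathrm{iv})$ fails then, by $(\mathrm{iv})\Leftrightarrow(\mathrm{iii})$, $G$ contains a subgroup $A\cong C_p\times C_p$. Restricting a flabby resolution to $A$ and using the splitting $J_G|_A\cong J_A\oplus(\mathrm{permutation})$ coming from $\bZ[G]|_A\cong\bZ[A]^{[G:A]}$, one sees that $F|_A$ represents $[J_A]^{fl}$; since coflabbiness is a property of the flabby class and is inherited by subgroups, it suffices to show that $[J_A]^{fl}$ is \emph{not} coflabby. For $A=C_p\times C_p$ every proper subgroup is cyclic, so I would use a flabby resolution $0\to J_A\to P\to F\to 0$ whose permutation part is a sum of lattices $\bZ[A/C]$ with $C$ a proper (hence cyclic) subgroup of $A$. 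Then $\widehat H^1(A,P)=0$ gives $\widehat H^1(A,F)=\ker\bigl(\widehat H^2(A,J_A)\xrightarrow{\alpha}\widehat H^2(A,P)\bigr)$, and each component of $\alpha$ factors, via Shapiro, through a restriction $\widehat H^2(A,J_A)\to\widehat H^2(C,J_A)$ to a cyclic subgroup $C$; but $\widehat H^2(C,J_A)\cong H^3(C,\bZ)=0$ since the cohomology of a cyclic group vanishes in odd degree. Hence $\alpha=0$ and $\widehat H^1(A,F)\cong\widehat H^2(A,J_A)\cong H^3(A,\bZ)\cong\bZ/p\neq 0$, so $F$ is not coflabby, proving $\neg(\mathrm{i})$.

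The step I expect to be the main obstacle is exactly this non-vanishing $\widehat H^1(A,[J_A]^{fl})\neq 0$ for $A=C_p\times C_p$, i.e.\ establishing that $\alpha=0$ on the nose. The easy inclusion $\widehat H^1(H,F)\hookrightarrow H^3(H,\bZ)$ hands us $(\mathrm{iv})\Rightarrow(\mathrm{i})$ for free, but the opposite bound $\widehat H^1(A,F)\supseteq H^3(A,\bZ)$ requires genuine control of the map $\alpha$ into the permutation lattice $P$. This is the phenomenon that the flabby class detects only the part of $\widehat H^2(H,J_G)$ that is locally trivial on all cyclic subgroups, and it rests on being able to choose $P$ with cyclic point-stabilizers (which is automatic for $C_p\times C_p$, since its proper subgroups are all cyclic) so that every component of $\alpha$ factors through the vanishing groups $\widehat H^2(C,J_A)$. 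Once this explicit computation is secured, the four conditions close up exactly as above.
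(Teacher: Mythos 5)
Your proposal is correct, but it is genuinely different from what the paper does: the paper gives no proof at all, citing \cite[Corollaire 1]{CTS77}, and the key ingredient it records (in the remark following the theorem) is the full isomorphism $H^3(H,\bZ)\simeq H^1(H,[J_G]^{fl})$ for \emph{every} subgroup $H\leq G$, due to Voskresenskii and Colliot-Th\'el\`ene--Sansuc. Granting that isomorphism, the equivalence (i)$\Leftrightarrow$(iv) is immediate in both directions, and the rest is the group theory you also carry out. You instead prove only the easy injection $\widehat H^1(H,F)\hookrightarrow \widehat H^2(H,J_G)\simeq H^3(H,\bZ)$ in general (which gives (iv)$\Rightarrow$(i)), and for the converse you avoid the general isomorphism by a reduction that the cited sources do not need: restrict to $A\simeq C_p\times C_p$, use $J_G|_A\simeq J_A\oplus \bZ[A]^{[G:A]-1}$ and the facts that coflabbiness is invariant under the similarity class and under restriction, and then compute $\widehat H^1(A,[J_A]^{fl})\simeq H^3(A,\bZ)\simeq \bZ/p$ with a resolution whose permutation part has only cyclic stabilizers, so that $\alpha$ factors through the vanishing groups $\widehat H^2(C,J_A)\simeq \widehat H^3(C,\bZ)=0$. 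This buys a self-contained argument at the price of generality (you get the isomorphism $H^1(A,[J_A]^{fl})\simeq H^3(A,\bZ)$ only for the special $A$ you need, not for all $H$). One point you label ``automatic'' deserves a sentence: a flabby resolution of $J_A$ with all stabilizers proper exists because, dualizing, the standard coflabby resolution of $I_A$ uses surjections $\bigoplus_{C}\bZ[A/C]\otimes (I_A)^C\twoheadrightarrow I_A$, and the $C=A$ summand is unnecessary since $(I_A)^A=0$ (the norm element has nonzero augmentation); with that verified, your Shapiro/adjunction factorization and the value $H^3(C_p\times C_p,\bZ)\simeq\bZ/p$ are all standard and correct, as is your chain (ii)$\Leftrightarrow$(iii)$\Leftrightarrow$(iv) via restriction--transfer and the classification of $p$-groups all of whose abelian subgroups are cyclic.
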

\begin{remark} 
(1) It is known that 
each of the conditions (i)--(iv) of Theorem \ref{thCTS77} 
is equivalent to the condition that $G$ has periodic cohomology, 
i.e. there exist %some integer 
$q\neq 0$ and $u\in\widehat H^q(G,\bZ)$ such that the cup product map 
$u\cup -: \widehat H^n(G,\bZ)\rightarrow \widehat H^{n+q}(G,\bZ)$ 
is an isomorphism for any $n\in\bZ$ (see \cite[Theorem 11.6]{CE56}).\\
(2) $H^3(H,\bZ)\simeq H^1(H,[J_G]^{fl})$ for any subgroup $H$ of $G$ 
(see \cite[Theorem 7]{Vos70} and \cite[Proposition 1]{CTS77}). 
\end{remark}

\begin{theorem}[{Endo and Miyata \cite[Theorem 2.1]{EM82}}]\label{thEM82}
Let $G$ be a finite group. 
Then the following conditions are equivalent:\\
{\rm (i)} $\cH^1(G)\cap \cH^{-1}(G)=\cD(G)$, i.e. 
any flabby and coflabby $G$-lattice is invertible;\\
{\rm (ii)} $[J_G\otimes_\bZ J_G]^{fl}=[[J_G]^{fl}]^{fl}$ is invertible;\\
{\rm (iii)} any $p$-Sylow subgroup of $G$ is cyclic for odd $p$ and 
cyclic or dihedral $($including Klein's four group$)$ for $p=2$.
\end{theorem}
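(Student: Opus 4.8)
The three conditions are of two kinds: (iii) is purely group-theoretic, while (i) and (ii) concern the lattice category $\cC(G)$, and the governing principle is that all of them are compatible with restriction to Sylow subgroups. If $G_p\leq G$ denotes a Sylow $p$-subgroup, then a $G$-lattice that is flabby, coflabby, or invertible restricts to a $G_p$-lattice with the same property, and the \emph{invertibility of a flabby class} is detected on the $G_p$ (this is the same Sylow-localization that underlies Saltman's retract-rationality criterion, Theorem \ref{thEM73}(iii)). Since (iii) for $G$ is by definition (iii) for each $G_p$ viewed as a group, the entire statement reduces to the case $G=P$ a $p$-group, where (iii) reads: ``$P$ is cyclic, or $p=2$ and $P$ is dihedral (including the Klein four-group).'' The plan is therefore to verify (i) and (ii) for these two admissible families and to refute them for every other $p$-group, with the generalized quaternion group as the decisive boundary case.

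A separate preliminary step is to establish the identity asserted in (ii), namely $[J_G\otimes_\bZ J_G]^{fl}=[[J_G]^{fl}]^{fl}$. Tensoring the fundamental sequence $0\to\bZ\to\bZ[G]\to J_G\to 0$ with $J_G$ produces $0\to J_G\to\bZ[G]\otimes_\bZ J_G\to J_G\otimes_\bZ J_G\to 0$, whose middle term is $\bZ[G]$-free and hence permutation; this exhibits $J_G\otimes_\bZ J_G$ as a cosyzygy of $J_G$. Splicing this against a flabby resolution of $J_G$ (Theorem \ref{thEM}) and carefully tracking the permutation summands then yields the displayed equality of flabby classes. The delicate point here is that the naive ``cosyzygy = iterated flabby class'' statement is \emph{false} for a general lattice (e.g.\ for $\bZ$, since $[J_G]^{fl}\neq 0$ in general), so this identity uses the specific position of $J_G$ and the bookkeeping of correction terms cannot simply be discarded. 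Granting it, condition (ii) becomes the single assertion that $[[J_G]^{fl}]^{fl}$ is invertible, which, being the invertibility of a flabby class, is again Sylow-local.

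The heart is the $p$-group dichotomy. When $P$ is cyclic, every flabby $P$-lattice is already invertible (Endo--Miyata; cf.\ Theorem \ref{th13-1}), so $\cH^1(P)\cap\cH^{-1}(P)=\cD(P)$ holds trivially and $[J_P]^{fl}=0$ forces $[[J_P]^{fl}]^{fl}=0$; thus both (i) and (ii) hold. The substantive positive case is $P$ dihedral, where one must show $\cH^1(P)\cap\cH^{-1}(P)=\cD(P)$ and that $[[J_P]^{fl}]^{fl}$ is invertible even though $[J_P]^{fl}$ itself is neither invertible nor coflabby (by Theorems \ref{th13-1} and \ref{thCTS77}). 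For the negative direction one refutes (i) and (ii) for all remaining $p$-groups: for odd non-cyclic $P$ this uses Theorem \ref{th13-1}, and the decisive case is $P$ generalized quaternion, where $[J_P]^{fl}$ is simultaneously flabby (by construction), coflabby (Theorem \ref{thCTS77}), and \emph{non-invertible} (Theorem \ref{th13-1}); this single lattice directly violates (i), and via the identity it obstructs (ii) as well. The equivalence (i)$\Leftrightarrow$(ii) then follows by both being equivalent to (iii).

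The main obstacle is the dihedral analysis and its mirror image, the exclusion of the quaternion, semidihedral, and remaining non-maximal-class $2$-groups: this is precisely where the present condition diverges from the cyclic-or-quaternion dichotomy of Theorem \ref{thCTS77}, so it cannot be resolved by the formal $\widehat H^i$-vanishing criteria alone and requires a hands-on description of the flabby-and-coflabby $P$-lattices for dihedral $P$. The natural route is to exploit the structure of $P$ as an extension of $C_2$ by its cyclic subgroup of index two, over which flabbiness already implies invertibility (Theorem \ref{th13-1}), and to control the extension by an explicit computation realizing each such lattice as a direct summand of a permutation lattice; dually one constructs, for every excluded $2$-group, an explicit flabby-and-coflabby lattice that is not invertible. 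The secondary technical difficulty is the permutation-summand bookkeeping in the dimension-shift of the second paragraph, including the coflabbiness of the iterated flabby class, where the correction terms (controlled through the isomorphism $H^3(H,\bZ)\simeq H^1(H,[J_G]^{fl})$) must be handled exactly rather than approximately.
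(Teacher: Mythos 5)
The paper itself contains no proof of this theorem: it is quoted from Endo and Miyata \cite{EM82}, the only in-paper addition being the remark immediately after the statement that $[J_G]^{fl}=[J_G\otimes_\bZ J_G]$ (citing \cite[Section 2]{EM82}). So your proposal can only be judged on its own terms, and while your Sylow-localization frame is sound (flabbiness, coflabbiness and invertibility all restrict to subgroups; invertibility of a flabby lattice is detected on Sylow subgroups via restriction--corestriction on the relevant $\mathrm{Ext}^1$-groups; and induction from $G_p$ to $G$ preserves flabby-and-coflabby while reflecting non-invertibility), the negative direction contains a concrete error. To refute (i) for an odd non-cyclic $p$-group $P$ you offer $[J_P]^{fl}$, which by Theorem \ref{th13-1} is indeed flabby and non-invertible; but by Theorem \ref{thCTS77} this class is coflabby only when $P$ is cyclic or generalized quaternion, so for odd non-cyclic $P$ (and equally for semidihedral and the other excluded $2$-groups) your witness lies outside $\cH^1(P)\cap\cH^{-1}(P)$ and contradicts nothing in (i). Producing a flabby \emph{and} coflabby non-invertible lattice for, say, $C_p\times C_p$ with $p$ odd --- while showing none exists for the Klein four group and the dihedral $2$-groups --- is precisely the hard content of \cite{EM82} that your plan leaves untouched; only the generalized quaternion case genuinely works as you describe it.

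Two further steps are asserted rather than proved. First, the identity $[J_G\otimes_\bZ J_G]^{fl}=[[J_G]^{fl}]^{fl}$ does not follow from ``splicing with bookkeeping'': what is needed is exactly the flabbiness of $J_G\otimes_\bZ J_G$, i.e. $\widehat H^{-1}(H,J_G\otimes_\bZ J_G)=0$ for all $H\leq G$, so that $0\to J_G\to \bZ[G]\otimes_\bZ J_G\to J_G\otimes_\bZ J_G\to 0$ (whose middle term is free) is itself a flabby resolution; then $[J_G]^{fl}=[J_G\otimes_\bZ J_G]$ and the identity in (ii) is immediate from the well-definedness of $[\,\cdot\,]^{fl}$ on similarity classes (Definition \ref{defFlabby}). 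Your sketch never establishes this vanishing, which is the point of \cite[Section 2]{EM82} that the paper flags. Second, ``via the identity it obstructs (ii)'' for quaternion $P$ is a non sequitur: non-invertibility of $F=[J_P]^{fl}$ does not formally pass to $[F]^{fl}$. The correct mechanism is Lemma \ref{lemSL}(ii): if $[F]^{fl}$ were invertible one could choose a resolution $0\to F\to Q\to E\to 0$ with $E$ invertible, and since $F$ is coflabby for quaternion $P$ (Theorem \ref{thCTS77}) the sequence splits, making $F$ invertible against Theorem \ref{th13-1}. Note that this argument again hinges on coflabbiness, so it too fails to refute (ii) at the odd non-cyclic $p$-groups. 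Finally, the positive dihedral case --- which you rightly call the heart --- is deferred entirely; reducing along the cyclic index-two subgroup is a reasonable hope, but no argument is given. In sum: a correct reduction skeleton with the decisive $p$-group analysis missing and one witness chosen incorrectly.
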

Note that $[J_G]^{fl}=[J_G\otimes_\bZ J_G]$ (see \cite[Section 2]{EM82}). 
\begin{lemma}[{Swan \cite[Lemma 3.1]{Swa10}}]\label{lemSwa}
Let $0\rightarrow M_1\rightarrow M_2\rightarrow M_3\rightarrow 0$ 
be a short exact sequence of $G$-lattices with $M_3$ invertible. 
Then the flabby class $[M_2]^{fl}=[M_1]^{fl}+[M_3]^{fl}$. 
In particular, if $[M_1]^{fl}$ is invertible, 
then $-[M_1]^{fl}=[[M_1]^{fl}]^{fl}$. 
\end{lemma}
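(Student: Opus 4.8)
The plan is to build an explicit flabby resolution of $M_2$ out of a flabby resolution of $M_1$, exploiting the invertibility of $M_3$ to split off a permutation term.

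First I would fix a flabby resolution $0\to M_1\to P_1\to F_1\to 0$ of $M_1$ (Theorem \ref{thEM}), so that $P_1$ is permutation, $F_1$ is flabby and $[M_1]^{fl}=[F_1]$. Next I would form the pushout $D$ of the two injections $M_1\hookrightarrow M_2$ and $M_1\hookrightarrow P_1$. The pushout simultaneously produces two short exact sequences $0\to M_2\to D\to F_1\to 0$ and $0\to P_1\to D\to M_3\to 0$, whose cokernels are $P_1/M_1=F_1$ and $M_2/M_1=M_3$ respectively. The decisive step is that $P_1$, being permutation, is invertible and hence coflabby by Lemma \ref{lemSL}(i), while $M_3$ is invertible by hypothesis; therefore Lemma \ref{lemSL}(ii) applies to the second sequence and forces it to split, giving $D\simeq P_1\oplus M_3$.

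Substituting this into the first sequence yields $0\to M_2\to P_1\oplus M_3\to F_1\to 0$. To convert the invertible middle term into a genuine permutation lattice I would choose a permutation lattice $P_3=M_3\oplus M_3'$ containing $M_3$ as a direct summand (possible precisely because $M_3$ is invertible), note that $0\to M_3\to P_3\to M_3'\to 0$ is a flabby resolution so $[M_3]^{fl}=[M_3']$, and add the trivial sequence $0\to 0\to M_3'\to M_3'\to 0$. This produces $0\to M_2\to P_1\oplus P_3\to F_1\oplus M_3'\to 0$, which is a bona fide flabby resolution of $M_2$: the middle term is permutation, and $F_1\oplus M_3'$ is flabby, being a sum of the flabby $F_1$ and the invertible, hence flabby, $M_3'$. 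Reading off the flabby class gives $[M_2]^{fl}=[F_1\oplus M_3']=[F_1]+[M_3']=[M_1]^{fl}+[M_3]^{fl}$, as desired. For the final assertion I would apply this formula to a flabby resolution of $M_1$ itself: if $[M_1]^{fl}$ is invertible, choose a flabby resolution $0\to M_1\to P\to F\to 0$ in which $F$ is invertible and represents $[M_1]^{fl}$ (enlarging $P$ and $F$ by a common permutation summand if necessary). Applying the main identity with $M_2=P$ and $M_3=F$ gives $[P]^{fl}=[M_1]^{fl}+[F]^{fl}$; since $P$ is permutation the left side is $0$, and by well-definedness of the flabby class (Definition \ref{defFlabby}) we have $[F]^{fl}=[[M_1]^{fl}]^{fl}$, so $[[M_1]^{fl}]^{fl}=-[M_1]^{fl}$.

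The main obstacle to keep in view is that a naive horseshoe resolution would instead present $[M_2]^{fl}$ as an extension $0\to F_1\to F\to F_3\to 0$ of the two flabby parts, and such an extension need not split in $\cC(G)/\cS(G)$, since $F_1$ is only flabby and not necessarily coflabby. Routing the construction through the pushout is exactly what transfers the splitting problem onto the sequence $0\to P_1\to D\to M_3\to 0$, where the coflabbiness hypothesis of Lemma \ref{lemSL}(ii) is automatic because $P_1$ is permutation. The one routine point to verify carefully is that the pushout genuinely yields the two stated exact sequences, in particular that the map $M_2\to D$ is injective.
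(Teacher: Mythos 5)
Your proof is correct, and it is essentially the argument of the cited source: the paper itself states this lemma without proof, quoting Swan \cite[Lemma 3.1]{Swa10}, whose proof proceeds exactly as you do --- pushout of $M_1\hookrightarrow M_2$ and $M_1\hookrightarrow P_1$, splitting of $0\to P_1\to D\to M_3\to 0$ via Lemma \ref{lemSL} since permutation lattices are coflabby and $M_3$ is invertible, and then absorbing the invertible summand $M_3$ into a permutation lattice. Your closing remarks (injectivity of $M_2\to D$, and why the naive horseshoe argument would fail) are accurate, and the deduction of $-[M_1]^{fl}=[[M_1]^{fl}]^{fl}$ from the main identity applied to a flabby resolution with invertible cokernel is also the standard one.
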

\begin{definition}\label{defMG} 
Let $G$ be a finite subgroup of $\GL_n(\bZ)$. 
{\it The $G$-lattice $M_G$ of rank $n$} 
is defined to be the $G$-lattice with a $\bZ$-basis $\{u_1,\ldots,u_n\}$ 
on which $G$ acts by $\sigma(u_i)=\sum_{j=1}^n a_{i,j}u_j$ for any $
\sigma=[a_{i,j}]\in G$. 
\end{definition}
\begin{lemma}[{see \cite[Remarque R2, page 180]{CTS77}, \cite[Lemma 2.17]{HY17}}]\label{lemp3}
Let $G$ be a finite subgroup of $\GL_n(\bZ)$ 
and $M_G$ be the corresponding $G$-lattice 
as in Definition \ref{defMG}. 
Let $H\leq G$ and $\rho_H(M_H)$ be the flabby class of $M_H$ 
as an $H$-lattice.\\
{\rm (i)} If $\rho_G(M_G)=0$, then $\rho_H(M_H)=0$.\\
{\rm (ii)} If $\rho_G(M_G)$ is invertible, then $\rho_H(M_H)$ is invertible.
\end{lemma}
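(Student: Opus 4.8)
The plan is to show that restriction from $G$ to $H$ carries a flabby resolution of $M_G$ to a flabby resolution of $M_H$, and then to read off both statements from the behaviour of the classes $0$ and ``invertible'' under restriction. Recall first that $M_H$ is literally $M_G$ with its action restricted to $H\leq G$, so the two lattices share the same underlying free abelian group. By Theorem \ref{thEM} I would fix a flabby resolution $0\to M_G\to P\to F\to 0$ of $M_G$ as a $G$-lattice, with $P$ permutation and $F$ flabby. Restricting the $G$-action to $H$ yields a short exact sequence of $H$-lattices $0\to M_H\to P|_H\to F|_H\to 0$, and the key point is that this is again a flabby resolution.

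For this I would verify two closure properties of restriction. First, if $P\simeq\oplus_i\bZ[G/H_i]$ is permutation, then each $\bZ[G/H_i]$, viewed as an $H$-lattice, decomposes along the $H$-orbits of $G/H_i$; by the orbit-stabiliser theorem (Mackey's formula) one gets $\bZ[G/H_i]|_H\simeq\oplus_g\bZ[H/(H\cap gH_ig^{-1})]$, a sum over double-coset representatives $g\in H\backslash G/H_i$, so $P|_H$ is a permutation $H$-lattice. Second, if $F$ is flabby as a $G$-lattice, i.e. $\widehat H^{-1}(K,F)=0$ for all $K\leq G$, then a fortiori $\widehat H^{-1}(K,F)=0$ for all $K\leq H$, so $F|_H$ is flabby as an $H$-lattice. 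Consequently $0\to M_H\to P|_H\to F|_H\to 0$ is a flabby resolution, and by the well-definedness of the flabby class (Definition \ref{defFlabby}) we obtain $\rho_H(M_H)=[F|_H]$ in $\cC(H)/\cS(H)$.

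It remains to translate the two hypotheses. For (i), $\rho_G(M_G)=0$ means $F$ is stably permutation as a $G$-lattice, say $F\oplus Q\simeq Q'$ with $Q,Q'$ permutation; restricting to $H$ and using the first closure property gives $F|_H\oplus Q|_H\simeq Q'|_H$ with $Q|_H,Q'|_H$ permutation, so $[F|_H]=0$ and $\rho_H(M_H)=0$. For (ii), I would note additionally that restriction sends invertible lattices to invertible lattices: if $E\oplus E'\simeq P''$ is permutation, then $E|_H\oplus E'|_H\simeq P''|_H$ is permutation, so $E|_H$ is invertible. Now $\rho_G(M_G)$ invertible means $F\oplus P_1\simeq E\oplus P_2$ for some invertible $E$ and permutation $P_1,P_2$; restricting yields $F|_H\oplus P_1|_H\simeq E|_H\oplus P_2|_H$, whence $[F|_H]=[E|_H]$ is invertible, i.e. $\rho_H(M_H)$ is invertible. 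The only non-formal ingredient is the first closure property, so the main point to get right is the Mackey decomposition of $\bZ[G/H_i]|_H$; everything else is bookkeeping with similarity classes in $\cC(H)/\cS(H)$.
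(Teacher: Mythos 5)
Your proof is correct: restricting a flabby resolution of $M_G$ to $H$, using the Mackey/orbit decomposition $\bZ[G/H_i]|_H\simeq\oplus_g\bZ[H/(H\cap gH_ig^{-1})]$ to see that permutation (hence stably permutation and invertible) lattices restrict to lattices of the same type, and noting that $\widehat H^{-1}$-vanishing for all subgroups of $G$ implies it for all subgroups of $H$, is exactly the standard argument. The paper itself gives no proof, deferring to \cite[Remarque R2, page 180]{CTS77} and \cite[Lemma 2.17]{HY17}, and the proof in those references is the same restriction-of-a-flabby-resolution argument you give.
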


\medskip
{\it Rationality problem for algebraic tori of small dimension.} 
It is easy to see that all the $1$-dimensional algebraic $k$-tori $T$, 
i.e. the trivial torus $\bG_m$ and the norm one torus $R_{K/k}^{(1)}(\bG_m)$ 
with $[K:k]=2$, are $k$-rational. 

There are $13$ (resp. $73$, $710$, $6079$) $\bZ$-classes forming 
$10$ (resp. $32$, $227$, $955$) $\bQ$-classes 
in $\GL_2(\bZ)$ (resp. $\GL_3(\bZ)$, $\GL_4(\bZ)$, $\GL_5(\bZ)$).
\begin{theorem}[{Voskresenskii \cite{Vos67}}]\label{thVo}
All the $2$-dimensional algebraic $k$-tori $T$ are $k$-rational. 
In particular, for any finite subgroups $G\leq \GL_2(\bZ)$, $L(x_1,x_2)^G$ 
is $k$-rational. 
\end{theorem}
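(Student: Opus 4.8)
The plan is to use the duality between two-dimensional $k$-tori and rank-two $G$-lattices to reduce the theorem to its ``in particular'' clause, and then to prove the latter by an explicit case analysis over the finite subgroups of $\GL_2(\bZ)$. A two-dimensional torus $T$ is determined up to $k$-isomorphism by the conjugacy class of the finite group $G=h(\mathcal{G})\leq\GL_2(\bZ)$, and its function field is $L(x_1,x_2)^G$ for the corresponding lattice $M_G$ of Definition \ref{defMG}; so it suffices to show that $L(x_1,x_2)^G$ is $k$-rational for every finite subgroup $G\leq\GL_2(\bZ)$.

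First I would invoke the classification of such subgroups up to $\bZ$-conjugacy: there are exactly $13$ classes, forming $10$ $\bQ$-classes, and each is contained in one of the two maximal finite subgroups of $\GL_2(\bZ)$, namely the dihedral groups of order $8$ and $12$. This turns the problem into a finite check over explicit actions of the form (\ref{acts}).

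For each class I would then produce two algebraically independent $G$-invariants $f,g\in L(x_1,x_2)$ with $L(x_1,x_2)^G=k(f,g)$, yielding $k$-rationality directly. The cyclic and abelian cases are handled by writing down the evident multiplicative invariants (products, norms, and quotients stabilized by the generating matrices). For the non-abelian groups I would filter $G$ by a normal cyclic subgroup $N$, first pass to $L(x_1,x_2)^N$ --- itself rational of transcendence degree two by the abelian case --- and then take invariants under the cyclic quotient $G/N$ acting on these new coordinates, iterating the same monomial construction. As a consistency check, and to connect with the paper's machinery, one also verifies $[M_G]^{fl}=0$ via a flabby resolution (Theorem \ref{thEM}), which recovers stable $k$-rationality through Theorem \ref{thEM73}(i).

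The main obstacle is the explicit construction of the invariant generators in the non-abelian cases --- the dihedral groups of orders $6$, $8$, and $12$ --- where one must choose the correct intermediate coordinates after descending through $N$ and then verify both algebraic independence of $f,g$ and that they generate the full invariant field rather than a proper subfield. Because stable $k$-rationality does not formally imply $k$-rationality, this step cannot be replaced by the flabby-class computation, and it is where the genuine content lies; the remaining cases reduce to routine symmetric-function or monomial-invariant calculations once multiplicative coordinates adapted to the group action are fixed.
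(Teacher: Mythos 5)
The paper does not actually prove this statement: it is quoted verbatim from Voskresenski\u{\i} \cite{Vos67} as background, so your attempt can only be measured against the known proofs. Your opening reduction is fine and matches the paper's setup: by the duality of Section \ref{seInt}, a $2$-dimensional torus is determined by a finite subgroup $G\leq \GL_2(\bZ)$ together with a Galois extension $L/k$ with ${\rm Gal}(L/k)\simeq G$, and the count of $13$ $\bZ$-classes (in $10$ $\bQ$-classes) is the one the paper records before Theorem \ref{thVo}. The problems begin with how you propose to compute $L(x_1,x_2)^G$. First, the action (\ref{acts}) is not purely monomial: $G$ acts simultaneously on the coefficients $L$ (semilinearly, via the Galois action) and on $x_1,x_2$ (monomially), so there are in general \emph{no} ``evident multiplicative invariants'' with coefficients in $k$. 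Already for the $1$-dimensional norm one torus of a quadratic extension $L=k(\sqrt{a})$, with $\sigma(x)=1/x$, the generator of the invariant field is of the shape $\sqrt{a}\,(x-1)/(x+1)$, mixing $L$-coefficients into the variables; in dimension $2$ every twisted case requires such coefficient choices, uniformly in $L$, and your plan does not say how to produce them. Second, and more seriously, the iteration step for non-abelian $G$ fails as stated: after writing $L(x_1,x_2)^N=L^N(u,v)$ for a normal cyclic $N$, the residual action of $G/N$ on $u,v$ is whatever it is under your chosen generators --- typically fractional linear or worse, not monomial --- so ``iterating the same monomial construction'' is not available. To close this one needs a genuinely new ingredient, e.g.\ choosing $u,v$ so that the quotient action becomes linear and then untwisting the semilinear part by Hilbert 90 (Speiser's lemma), and verifying in each case that the resulting two invariants generate the full fixed field. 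You candidly label exactly this as ``the main obstacle\dots where the genuine content lies,'' which amounts to deferring the entire proof: for this theorem the difficulty is concentrated precisely in those cases.

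It is also worth knowing that Voskresenski\u{\i}'s actual argument is geometric rather than invariant-theoretic: one takes a smooth projective $G$-equivariant (toric) compactification of $T$ --- for the generic fan the del Pezzo surface of degree $6$ associated with the hexagonal fan --- and uses the birational classification of minimal geometrically rational surfaces over a non-closed field, together with the $k$-point supplied by the identity of $T$, to conclude $k$-rationality. This bypasses both of the gaps above, whereas your flabby-class ``consistency check'' via Theorems \ref{thEM} and \ref{thEM73} only yields stable $k$-rationality and, as you correctly note, cannot be upgraded to $k$-rationality for surfaces over a non-closed field; so as written your proposal establishes strictly less than the theorem claims.
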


A rational (stably rational, retract rational) classification of %the 
$3$-dimensional $k$-tori is given by Kunyavskii \cite{Kun90} 
(for the last statement, see Kang \cite[page 25, the fifth paragraph]{Kan12}). 
\begin{theorem}[{Kunyavskii \cite{Kun90}}]\label{thKu}
Let $L/k$ be a Galois extension and $G\simeq 
{\rm Gal}(L/k)$ be a finite subgroup of $\GL_3(\bZ)$ 
which acts on $L(x_1,x_2,x_3)$ via $(\ref{acts})$. 
Then $L(x_1,x_2,x_3)^G$ is not $k$-rational if and only if 
$G$ is conjugate to one of the $15$ groups which are given 
as in \cite[Theorem $1$]{Kun90}. 
Moreover, if $L(x_1,x_2,x_3)^G$ is not $k$-rational, 
then it is not retract $k$-rational. 
\end{theorem}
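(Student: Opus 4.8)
The plan is to reduce the statement to a finite computation over the conjugacy classes of finite subgroups of $\GL_3(\bZ)$ and then to analyze each class through its flabby invariant. As recorded above, there are $73$ $\bZ$-classes (forming $32$ $\bQ$-classes) of finite subgroups $G\leq\GL_3(\bZ)$; fix a list of representatives, and for each let $M_G$ be the associated $G$-lattice of rank $3$ as in Definition~\ref{defMG}, so that $L(x_1,x_2,x_3)^G=L(M_G)^G$. By Theorem~\ref{thEM73} the entire rationality hierarchy for $L(M_G)^G$ is controlled by the flabby class $[M_G]^{fl}\in\cC(G)/\cS(G)$: the field is stably $k$-rational exactly when $[M_G]^{fl}=0$, and retract $k$-rational exactly when $[M_G]^{fl}$ is invertible. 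Thus the problem splits into (a) computing $[M_G]^{fl}$ for each of the $73$ classes, and (b) closing the gap between stable/retract rationality and genuine rationality.

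For the non-rational direction together with the ``moreover'' clause, I would single out the $15$ classes for which the computation shows $[M_G]^{fl}$ to be \emph{not} invertible. By Saltman's criterion, Theorem~\ref{thEM73}(iii), non-invertibility immediately yields that $L(M_G)^G$ is not retract $k$-rational; since rationality implies retract rationality, this simultaneously proves that these fields are not $k$-rational and establishes the final assertion. Non-invertibility is detected through coflabbiness: an invertible lattice is both flabby and coflabby (Lemma~\ref{lemSL}(i)), so after forming a flabby resolution $0\to M_G\to P\to F\to 0$ it suffices to exhibit a subgroup $H\leq G$ with $H^1(H,F)\neq 0$, which certifies that $F$ is not coflabby and hence not invertible. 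The restriction principle of Lemma~\ref{lemp3}(ii) makes this localization legitimate: if $\rho_G(M_G)$ were invertible then $\rho_H(M_H)$ would be invertible for every $H\leq G$, so the obstruction may be sought inside a small subgroup, typically a noncyclic abelian $2$-group, where the relevant Tate cohomology is easy to compute.

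For the converse I must prove that each of the remaining $58$ classes gives a $k$-rational field. For those $G$ with $[M_G]^{fl}=0$ one obtains stable $k$-rationality for free from Theorem~\ref{thEM73}(i); the work is then to upgrade stable rationality to rationality, and to treat any class whose flabby class is invertible but nonzero by a direct construction. The efficient route is the fibration (no-name) method: realize the $3$-dimensional torus as fibered over a lower-dimensional rational base, invoking Theorem~\ref{thVo} that every $2$-dimensional torus is $k$-rational, so that a rational section or a rational generic fiber forces rationality of the total space. Concretely, whenever $M_G$ admits a $G$-stable permutation (or invertible) sublattice of rank one or two with rational quotient action, one linearizes the fiber and descends to a purely transcendental extension, using Voskresenskii's theorem to supply rationality of the base.

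The main obstacle is this last step: converting the cohomological vanishing $[M_G]^{fl}=0$ into an honest birational isomorphism with affine space, together with the bookkeeping of carrying out the flabby-class computation uniformly across all $73$ classes. The cohomological part is essentially mechanical — it can be organized, and indeed automated, via flabby resolutions and restriction to subgroups — but the rationality upgrade is not formal: it requires explicit generators for $L(M_G)^G$ in each borderline case and a verification that no $3$-dimensional torus is stably or retract rational without being rational. The crux is therefore to show that for the $58$ non-excluded classes the three notions collapse, so that invertibility of $[M_G]^{fl}$, a purely lattice-theoretic condition, is not merely necessary but also sufficient for $k$-rationality in dimension three.
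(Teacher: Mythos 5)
This statement is not proved in the paper at all: it is quoted from Kunyavskii \cite{Kun90}, with the final ``moreover'' clause attributed to Kang \cite[page 25, the fifth paragraph]{Kan12}, so there is no in-paper proof to compare against; your proposal has to be judged as an attempt to reconstruct the cited literature. Your negative direction is sound in outline and is essentially Kang's observation: if for each of the $15$ classes one exhibits $H\leq G$ with $H^1(H,F)\neq 0$ for $F=[M_G]^{fl}$, then $F$ is not coflabby, hence not invertible by Lemma \ref{lemSL}(i), hence $L(M_G)^G$ is not retract $k$-rational by Theorem \ref{thEM73}(iii), which subsumes non-rationality. (One caveat: $H^1(H,F)\neq 0$ is only a sufficient certificate for non-invertibility; a flabby and coflabby lattice need not be invertible, as the paper's chain of non-reversible implications records, so you cannot know in advance that this test settles all $15$ classes --- it happens to, but that is part of the computation, not of the method.)

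The genuine gaps are in the positive direction. First, there is an internal inconsistency: you propose to ``treat any class whose flabby class is invertible but nonzero by a direct construction'' of rationality. This is impossible. By Theorem \ref{thEM73}(i), $[M_G]^{fl}\neq 0$ means $L(M_G)^G$ is not stably $k$-rational, hence not $k$-rational; no construction could succeed. For the theorem as stated to hold, the computation must show that all $58$ remaining classes have $[M_G]^{fl}=0$ exactly --- the ``invertible but nonzero'' branch of your plan is vacuous if the theorem is true and fatal if it ever fires. Second, and more seriously, the step you yourself flag as the crux --- upgrading $[M_G]^{fl}=0$ (stable rationality) to honest $k$-rationality --- is not carried out, and the fibration heuristic you offer does not close it: a map to a rational base with rational generic fiber does not yield rationality of the total space unless the fibration is birationally trivialized (a no-name argument needs a linearizable or split structure, e.g.\ a permutation direct summand, and an extension $0\to M_1\to M_G\to P\to 0$ need not split unless $\mathrm{Ext}^1_{\bZ[G]}(P,M_1)=\oplus\, H^1(H_i,M_1)$ vanishes). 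No general principle ``stably rational $\Rightarrow$ rational'' is available for $3$-dimensional tori a priori; in \cite{Kun90} this is precisely the content of a case-by-case birational analysis built on Voskresenskii's complete two-dimensional classification (Theorem \ref{thVo}), and your proposal replaces that analysis with a statement of the problem. As written, the proposal establishes (modulo finite computation) the retract-rationality dichotomy and stable rationality for the $58$ classes, but not the rationality assertion that the theorem actually makes.
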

Denote $L(M)=L(x_1,\ldots,x_n)$ where 
$M$ is a $G$-lattice via the action $(\ref{acts})$. 
When $M$ is decomposable, we have the following 
by Theorem \ref{thEM73} and Theorem \ref{thKu} 
(Note that $[M_1\oplus M_2]^{fl}=[M_1]^{fl}+[M_2]^{fl}$.).
\begin{theorem}[{see also Hoshi, Kang and Kitayama \cite[Theorem 6.5]{HKK14}}]\label{thd}
Let $G\simeq {\rm Gal}(L/k)$ be a finite group and 
$M\simeq M_1\oplus M_2$ be a decomposable $G$-lattice.\\
{\rm (i)} $L(M)^G$ is retract $k$-rational 
if and only if both of $L(M_i)^G$ $(i=1,2)$ are retract $k$-rational.\\
{\rm (ii)} If $L(M_1)^G$ and $L(M_2)^G$ are stably $k$-rational 
$($resp. $k$-rational$)$, 
then $K(M)^G$ is stably $k$-rational $($resp. $k$-rational$)$.\\
{\rm (iii)} When \rank $M_i\leq 3$ $(i=1,2)$, 
$L(M)^G$ is stably $k$-rational $($resp. $k$-rational$)$ if and only if 
both of $L(M_i)^G$ $(i=1,2)$ are stably $k$-rational $($resp. $k$-rational$)$.
\end{theorem}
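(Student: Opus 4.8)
The plan is to prove Theorem~\ref{thd} by reducing each statement to the additivity of the flabby class together with the rationality criteria of Theorem~\ref{thEM73} and the three-dimensional result of Theorem~\ref{thKu}. The key observation, recorded in the statement itself, is that $[M_1\oplus M_2]^{fl}=[M_1]^{fl}+[M_2]^{fl}$; this follows at once by taking flabby resolutions $0\to M_i\to P_i\to F_i\to 0$ $(i=1,2)$ and forming their direct sum, which yields a flabby resolution $0\to M_1\oplus M_2\to P_1\oplus P_2\to F_1\oplus F_2\to 0$ of $M_1\oplus M_2$, whence $[M_1\oplus M_2]^{fl}=[F_1\oplus F_2]=[F_1]+[F_2]=[M_1]^{fl}+[M_2]^{fl}$ in $\cC(G)/\cS(G)$.

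For part~(i), I would argue as follows. By Theorem~\ref{thEM73}~(iii), $L(M)^G$ is retract $k$-rational if and only if $[M]^{fl}=[M_1]^{fl}+[M_2]^{fl}$ is invertible. The ``if'' direction is the easy one: if each $[M_i]^{fl}=[E_i]$ is invertible, then their sum is represented by $E_1\oplus E_2$, which is again invertible (a direct summand of $P\oplus P'$ where $E_i\oplus E_i'\simeq P,P'$), so $L(M)^G$ is retract rational. The nontrivial direction is the converse, where one must show that invertibility of the sum forces invertibility of each summand. Here I would use the fact that an invertible lattice is exactly one that is flabby and coflabby \emph{and} whose flabby class vanishes in the appropriate sense; more directly, I would invoke that $E_1\oplus E_2$ invertible means it is a direct summand of a permutation lattice $P$, and since $E_i$ is a direct summand of $E_1\oplus E_2$, each $E_i$ is itself a direct summand of $P$, hence invertible. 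This shows both $[M_i]^{fl}$ are invertible, so both $L(M_i)^G$ are retract rational by Theorem~\ref{thEM73}~(iii).

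Part~(ii) is the most immediate: if $L(M_i)^G$ is stably $k$-rational then $[M_i]^{fl}=0$ by Theorem~\ref{thEM73}~(i), so $[M]^{fl}=[M_1]^{fl}+[M_2]^{fl}=0$, giving stable rationality of $L(M)^G$; the parenthetical rational statement follows by noting that $L(M)^G=L(M_1)^G(x)\cdots$ is built as an iterated rational extension when each piece is rational, using that $L(M_1\oplus M_2)=L(M_1)(x_{r+1},\dots,x_n)$ so that rationality of each factor over $k$ propagates. The genuinely new content lives in part~(iii), whose ``if'' direction is just part~(ii); the main obstacle is the ``only if'' direction. Here the hypothesis $\rank M_i\le 3$ is essential: I would suppose $L(M)^G$ is stably $k$-rational, i.e.\ $[M_1]^{fl}+[M_2]^{fl}=0$, and must deduce $[M_i]^{fl}=0$ for each $i$. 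The difficulty is that cancellation $[M_1]^{fl}=-[M_2]^{fl}$ does not by itself force either class to vanish. The point is that for lattices of rank at most three the three-dimensional classification of Theorem~\ref{thKu} applies to each $L(M_i)^G$ separately, and that theorem has the strong feature that \emph{not} stably rational already implies \emph{not} retract rational; combined with part~(i), if say $L(M_1)^G$ were not stably rational it would fail to be retract rational, forcing $[M_1]^{fl}$ non-invertible, but then $[M_2]^{fl}=-[M_1]^{fl}$ would also be non-invertible (negation preserves non-invertibility by Lemma~\ref{lemSwa}), contradicting retract rationality of $L(M_2)^G$ which small-rank stable/retract rationality would supply. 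Thus the low-rank hypothesis lets me transfer the additive relation on flabby classes back to a genuine vanishing via the rigidity of the three-dimensional picture, and this transfer is the crux of the argument.
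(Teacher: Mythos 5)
Your skeleton is exactly the paper's: additivity $[M_1\oplus M_2]^{fl}=[M_1]^{fl}+[M_2]^{fl}$ via direct sums of flabby resolutions, Theorem~\ref{thEM73} for the stable/retract criteria, and Theorem~\ref{thKu} for the rank~$\le 3$ rigidity (the paper states precisely these three ingredients and defers details to \cite[Theorem 6.5]{HKK14}). Your part~(i) is sound, including the key direct-summand trick in the ``only if'' direction — though note you should work with flabby representatives $F_i$ of $[M_i]^{fl}$ rather than calling them $E_i$ prematurely, and invertibility of the \emph{class} $[F_1\oplus F_2]$ only makes $F_1\oplus F_2$ a permutation-lattice summand after adding a permutation summand; the argument survives this. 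However, two of your justifications are wrong as stated, even though the conclusions they support are true. First, in (iii) you cite Lemma~\ref{lemSwa} for ``negation preserves non-invertibility''; that lemma says nothing of the sort — it identifies $-[M_1]^{fl}$ with $[[M_1]^{fl}]^{fl}$ only under the hypothesis that $[M_1]^{fl}$ is \emph{already} invertible. The fact you need is elementary but different: $[M_1]^{fl}+[M_2]^{fl}=0$ means $F_1\oplus F_2\oplus Q\simeq Q'$ with $Q,Q'$ permutation, so each $F_i$ is a direct summand of a permutation lattice and both classes are automatically invertible — your own summand trick from (i). Better still, the detour is unnecessary: stable rationality of $L(M)^G$ gives retract rationality, part~(i) gives retract rationality of each $L(M_i)^G$, and Theorem~\ref{thKu} (with Theorem~\ref{thVo} in rank $\le 2$, after replacing $L$ by $L^N$ for the kernel $N$ of $G\to\GL(M_i)$ so those theorems apply) upgrades retract rational to rational, hence stably rational, in rank $\le 3$. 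Relatedly, your aside that invertible lattices are ``exactly'' the flabby-and-coflabby ones with vanishing class is false in general — flabby and coflabby implies invertible only for the special groups of Theorem~\ref{thEM82} — but your direct argument supersedes it.

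The second genuine gap is the rational case of (ii). Writing $L(M_1\oplus M_2)=L(M_1)(x_{r+1},\ldots,x_n)$ does \emph{not} show that the $G$-invariants form a purely transcendental extension of $L(M_1)^G$: the residual action on the new variables is multiplicative, and there is no no-name lemma for multiplicative actions — invariants of a multiplicative action over a base need not be rational over the invariants of the base (this failure is the entire subject of the rationality problem for tori). The standard repair is geometric: $M_1\oplus M_2$ corresponds to the product torus $T_1\times_k T_2$, so $L(M)^G$ is the fraction field of $L(M_1)^G\otimes_k L(M_2)^G$ (equivalently, the free composite of the two invariant fields, as one checks by a transcendence-degree and Galois-correspondence count), and a product of rational (resp.\ stably rational) $k$-varieties is rational (resp.\ stably rational). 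With these two repairs your proof is complete and coincides in substance with the paper's.
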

\begin{remark}
Theorem \ref{thd} (iii) does not hold in higher dimensions. 
Indeed, there exist two tori $T$ and $T^\prime$ of dimension $4$ 
which are not stably $k$-rational but the torus $T\times T^\prime$ 
of dimension $8$ is stably $k$-rational, 
i.e. $-[M_1]^{fl}=[M_2]^{fl}\neq 0$ 
(see \cite[Theorem 1.27 and Remark 1.29]{HY17}).
\end{remark}
A classification of stably/retract rational 
algebraic $k$-tori in dimensions $4$ and $5$ is given as follows: 
%There are $6079$ $\bZ$-classes forming $955$ $\bQ$-classes in $\GL_5(\bZ)$. 
%Let $F_{20}\simeq C_5\rtimes C_4$ be the Frobenius group of order $20$. 
%
\begin{theorem}[Hoshi and Yamasaki {\cite[Theorem 1.9]{HY17}}]\label{th1}
Let $L/k$ be a Galois extension and $G\simeq 
{\rm Gal}(L/k)$ be a finite subgroup of $\GL_4(\bZ)$ 
which acts on $L(x_1,x_2,x_3,x_4)$ via $(\ref{acts})$. \\
{\rm (i)} 
$L(x_1,x_2,x_3,x_4)^G$ is stably $k$-rational 
if and only if 
$G$ is conjugate to one of the $487$ groups which are not in 
\cite[{\rm Tables} $2$, $3$ and $4$]{HY17}.\\
{\rm (ii)} 
$L(x_1,x_2,x_3,x_4)^G$ is not stably but retract $k$-rational 
if and only if $G$ is conjugate to one of the $7$ groups which are 
given as in \cite[{\rm Table} $2$]{HY17}.\\
{\rm (iii)} 
$L(x_1,x_2,x_3,x_4)^G$ is not retract $k$-rational if and only if 
$G$ is conjugate to one of the $216$ groups which are given as 
in \cite[{\rm Tables} $3$ and $4$]{HY17}.
\end{theorem}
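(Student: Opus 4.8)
The plan is to convert the rationality question into a computation of flabby classes and then carry out that computation over a complete list of the finite subgroups of $\GL_4(\bZ)$. By Theorem \ref{thEM73}, for $G\simeq{\rm Gal}(L/k)$ acting on $M=M_G$ of rank $4$, the field $L(M_G)^G$ is stably $k$-rational exactly when $[M_G]^{fl}=0$ in $\cC(G)/\cS(G)$, and retract $k$-rational exactly when $[M_G]^{fl}$ is invertible. Thus the whole theorem reduces to deciding, for each conjugacy class of $G\leq\GL_4(\bZ)$, which of the three mutually exclusive cases holds: $[M_G]^{fl}=0$, $[M_G]^{fl}$ invertible but nonzero, or $[M_G]^{fl}$ non-invertible. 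The three tables in the statement are then simply the bookkeeping of this trichotomy, and the counts $487+7+216=710$ record it.

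First I would fix the enumeration. There are $710$ $\bZ$-classes of finite subgroups of $\GL_4(\bZ)$ (forming $227$ $\bQ$-classes), and I would take them from the crystallographic databases so that each $G$ comes equipped with an explicit integral representation, hence an explicit $M_G$. I would immediately dispose of the groups whose lattice decomposes as $M_G\simeq M_1\oplus M_2$: by Theorem \ref{thd}, together with Theorem \ref{thVo} in dimension $2$ and Theorem \ref{thKu} in dimension $3$, the rationality type of a decomposable rank-$4$ lattice is determined by its summands of rank $\leq 3$, all of which are already classified. This leaves only the indecomposable lattices to be treated by a direct flabby-class computation.

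For each remaining $G$ I would construct a flabby resolution $0\to M_G\to P\to F\to 0$ with $P$ permutation and $F$ flabby (Theorem \ref{thEM}) and then identify the class $[F]=[M_G]^{fl}$. To certify $[F]=0$ I would exhibit permutation lattices $P_1,P_2$ with $F\oplus P_1\simeq P_2$, i.e. show $F$ is stably permutation. To certify that $[F]$ is invertible I would split $F$ off an explicit permutation lattice, or, when $G$ satisfies the Sylow hypotheses of Theorem \ref{thEM82} and $F$ happens to be coflabby as well, invoke that theorem; I would then separate ``invertible but nonzero'' from ``zero'' by showing $F$ is not stably permutation. To certify that $[F]$ is non-invertible I would use Lemma \ref{lemp3}(ii) in contrapositive form: it suffices to locate a subgroup $H\leq G$ for which $[M_H]^{fl}$ is already known to be non-invertible, so that non-retract-rationality propagates upward from a short list of obstruction subgroups.

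The hard part will be the two decisions that are not detected by any single cohomology group: showing that a flabby (or flabby-and-coflabby) $F$ is genuinely non-invertible, and showing that an invertible $[F]$ is nonzero, so as to separate case (ii) from case (i). Both require working inside the monoid $\cC(G)/\cS(G)$ rather than merely reading off $\widehat H^{\pm1}$. For non-invertibility I would rely on restriction to subgroups (Lemma \ref{lemp3}) and on Theorem \ref{thEM82}; for the nonvanishing of an invertible class I would exploit finiteness of the relevant similarity group and compute the actual order of $[F]$ there, using Lemma \ref{lemSwa} to pass between $[F]$ and $[[F]^{fl}]^{fl}$. In practice the entire trichotomy is best implemented as a single GAP routine run over all $710$ classes, producing explicit stably-permutation or direct-summand certificates that can then be verified by hand on the finitely many borderline groups.
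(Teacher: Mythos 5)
The paper does not actually prove Theorem \ref{th1}: it is quoted from \cite[Theorem 1.9]{HY17}, so the only ``proof'' present here is the citation, and the right benchmark is the method of that memoir. Your outline is essentially a faithful reconstruction of it: reduce via Theorem \ref{thEM73} to the trichotomy $[M_G]^{fl}=0$, invertible nonzero, or non-invertible, over the $710$ $\bZ$-classes of finite subgroups of $\GL_4(\bZ)$; dispose of decomposable lattices by Theorems \ref{thVo}, \ref{thKu} and \ref{thd} (legitimate here, since any nontrivial decomposition of a rank-$4$ lattice has both summands of rank $\leq 3$); and settle the remaining cases by machine-computed flabby resolutions with explicit stably-permutation or direct-summand certificates, using Lemma \ref{lemp3} (ii) to propagate non-invertibility up from subgroups. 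This is exactly the architecture of \cite{HY17}, whose GAP routines ({\tt IsInvertibleF}, {\tt PossibilityOfStablyPermutationF}) are the very ones this paper reuses in Example \ref{ex10}.

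The one step you should repair is your method for certifying $[F]\neq 0$ when $[F]$ is invertible: you propose to ``exploit finiteness of the relevant similarity group and compute the actual order of $[F]$ there.'' But $\cC(G)/\cS(G)$ is only a commutative monoid, and the Endo--Miyata/Endo--Kang theorem quoted in Section \ref{sePre} says that $\cH^{-1}(G)/\cS(G)$ is a finite group only for a very restricted list of groups ($C_n$, $D_m$ with $m\geq 3$ odd, certain $C_{q^f}\times D_m$ and $Q_{4m}$); for a general finite subgroup of $\GL_4(\bZ)$ no such finite group is available, so an ``order computation'' is not an effective procedure. What \cite{HY17} actually does---visible in this paper's Example \ref{ex10}---is an integral linear-algebra obstruction: any relation $F\oplus P_1\simeq P_2$ with $P_1,P_2$ permutation forces, for every conjugacy class of subgroups $H\leq G$, an identity among the $\bZ$-ranks of the $H$-fixed sublattices of $F$ and of the lattices $\bZ[G/H_i]$; solving this system ({\tt PossibilityOfStablyPermutationF}) either shows no admissible relation exists (e.g.\ every solution carries coefficient $\pm 2$ at $F$, as for $10T4$), proving $[F]\neq 0$, or returns finitely many candidate relations whose realizability is then tested by an explicit search for an isomorphism (as carried out for $10T3$). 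With that substitution, your plan coincides with the cited proof.
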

\begin{theorem}[{Hoshi and Yamasaki \cite[Theorem 1.12]{HY17}}]\label{th2}
Let $L/k$ be a Galois extension and $G\simeq 
{\rm Gal}(L/k)$ be a finite subgroup of $\GL_5(\bZ)$ 
which acts on $L(x_1,x_2,x_3,x_4,x_5)$ via $(\ref{acts})$.\\
{\rm (i)} 
$L(x_1,x_2,x_3,x_4,x_5)^G$ is stably $k$-rational if and only if 
$G$ is conjugate to one of the $3051$ groups which are not in 
\cite[{\rm Tables} $11$, $12$, $13$, $14$ and $15$]{HY17}.\\
{\rm (ii)} 
$L(x_1,x_2,x_3,x_4,x_5)^G$ is not stably but retract $k$-rational 
if and only if $G$ is conjugate to one of the $25$ groups which are given as 
in \cite[{\rm Table} $11$]{HY17}.\\
{\rm (iii)} 
$L(x_1,x_2,x_3,x_4,x_5)^G$ is not retract $k$-rational if and only if 
$G$ is conjugate to one of the $3003$ groups which are given as 
in \cite[{\rm Tables} $12$, $13$, $14$ and $15$]{HY17}.
\end{theorem}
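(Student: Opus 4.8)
The plan is to turn the rationality trichotomy into a computation of flabby classes over the complete list of finite subgroups of $\GL_5(\bZ)$. By Theorem \ref{thEM73}, for $G\leq \GL_5(\bZ)$ acting on $M=M_G$ via (\ref{acts}), the field $L(M)^G$ is stably $k$-rational precisely when $[M]^{fl}=0$, and retract $k$-rational precisely when $[M]^{fl}$ is invertible; hence the three classes in the statement are exactly the partition of $[M]^{fl}$ into $0$, invertible-but-nonzero, and non-invertible. It therefore suffices to run over the $6079$ $\bZ$-classes of finite subgroups recorded in the crystallographic tables, compute a flabby resolution $0\to M_G\to P\to F\to 0$ (which exists by Theorem \ref{thEM}), and decide for each class into which of the three cases $[F]$ falls. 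The target totals $3051+25+3003=6079$ serve as a consistency check that the trichotomy has been applied to every $\bZ$-class exactly once.

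The first reduction removes the decomposable lattices. If $M_G\simeq M_1\oplus M_2$ as a $G$-lattice then $[M_G]^{fl}=[M_1]^{fl}+[M_2]^{fl}$, and any summand of rank $\leq 2$ has vanishing flabby class by Theorem \ref{thVo} together with the elementary fact that rank-one lattices have trivial flabby class. Since two summands of rank $\geq 3$ cannot coexist in rank $5$, the class $[M_G]^{fl}$ collapses onto the flabby class of the unique summand of rank $3$ or $4$, which is already determined by Theorem \ref{thKu} (rank $3$) and Theorem \ref{th1} (rank $4$); Theorem \ref{thd} packages these reductions. Thus every $\bZ$-class whose lattice decomposes is settled by the dimension $\leq 4$ results, and the genuinely new work is confined to the $\bZ$-indecomposable rank-$5$ lattices. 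Note that $\bZ$-indecomposability is strictly finer than $\bQ$-irreducibility (there are $955$ $\bQ$-classes among the $6079$ $\bZ$-classes), so some lattices that are reducible over $\bQ$ must nevertheless be treated directly here.

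For each indecomposable $M_G$ I would compute $[M_G]^{fl}$ directly with the flabby-resolution algorithm of \cite{HY17}: construct $P$ and $F$ explicitly, then test whether $F$ is stably permutation (the case $[F]=0$), invertible but not stably permutation (the $25$ classes of \cite[Table 11]{HY17}), or non-invertible (\cite[Tables 12--15]{HY17}). Lemma \ref{lemp3} is used throughout both to avoid redundant computation and to cross-check, since invertibility and vanishing of the flabby class both descend to subgroups: once a minimal non-retract class is identified, every over-class restricting to it is non-retract, and once a maximal stably rational class is found, every sub-class inherits stable rationality. This organizes the indecomposable lattices along the subgroup poset and lets a comparatively small number of certificates propagate across the full list.

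The main obstacle is the reliable certification of these module-theoretic properties at this scale. Proving $[F]=0$ or $[F]$ invertible is in principle a search for an explicit isomorphism $F\oplus P_1\simeq P_2$ or an explicit splitting of $F$ off a permutation lattice, which the algorithm can carry out; the harder direction is certifying that the remaining $3003$ classes are genuinely non-invertible. A purely cohomological criterion does not suffice, because by Theorem \ref{thEM82} a flabby and coflabby lattice can fail to be invertible for every group outside the stated Sylow conditions, so the flabby representative may be both flabby and coflabby yet non-invertible. Certifying non-invertibility therefore requires either a restriction obstruction through Lemma \ref{lemp3} to a subgroup where non-invertibility is already established, or the structural invertibility test built into the algorithm. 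Ensuring that all $6079$ classes are treated consistently, and that the decomposable-versus-indecomposable bookkeeping matches the dimension $\leq 4$ tables exactly, is where the bulk of the care lies.
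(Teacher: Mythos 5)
Your proposal coincides with the actual proof: this paper does not prove Theorem \ref{th2} itself but quotes it from \cite[Theorem 1.12]{HY17}, and the argument given there is precisely your scheme --- translate the three rationality classes into the trichotomy $[M_G]^{fl}=0$ / invertible but nonzero / non-invertible via Theorem \ref{thEM73}, dispose of decomposable lattices through the rank $\leq 4$ classifications (Theorems \ref{thVo}, \ref{thKu}, \ref{th1}, \ref{thd}), and settle the remaining classes among the $6079$ $\bZ$-classes in $\GL_5(\bZ)$ by the GAP flabby-resolution algorithm (explicit stably-permutation isomorphisms, the invertibility test, and Lemma \ref{lemp3} propagating certificates along the subgroup poset). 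One incidental slip: the $955$ $\bQ$-classes count conjugacy classes in $\GL_5(\bQ)$, not $\bQ$-irreducible representations, though your underlying point --- that $\bZ$-indecomposable but $\bQ$-reducible lattices must still be treated directly --- is correct.
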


%%%%%%%%%%%%%%%%%%%%%%%%%%%%%%%%%%%%%%%%%%%%%%%%%%%%%%%%%%%%%%%%%
\section{Proof of Theorem {\ref{thmain1}} and Theorem \ref{thmain2}}
\label{seProof}

{\it Proof of Theorem \ref{thmain1}.} 
%We will give a proof of Theorem \ref{thmain1}. 
We may assume that 
$H$ is the stabilizer of one of the letters in $G$ 
(see the fourth paragraph of Section \ref{seInt}).

(1) and (2) follow from Theorem \ref{th13-2} and Theorem \ref{th15}. 

(3) follows from Theorem \ref{thS}. 
In particular, $T$ is retract $k$-rational for all the cases 
in this theorem because %$[M_G]^{fl}$ 
$[J_{G/H}]^{fl}$ is invertible for any transitive subgroup $G\leq S_n$ 
by Theorem \ref{thS} and Lemma \ref{lemp3} (ii). 

(4) follows from Theorem \ref{thA}. 

For (5) and (6), 
$C_{11}\rtimes C_5\leq \PSL_2(\bF_{11})$, 
$C_{11}\rtimes C_5\leq M_{11}$ 
and $C_{23}\rtimes C_{11}\leq M_{23}$ are transitive subgroups 
in $S_{11}$, $S_{11}$ and $S_{23}$ respectively. 
Hence we have $[J_{G/H}]^{fl}\neq 0$ for $G=\PSL_2(\bF_{11})$, $M_{11}$, 
$M_{23}$ by (2) and Lemma \ref{lemp3} (i). 

For (7), %assume that $d\geq 3$. 
it is enough to show that $[J_{G/H}]^{fl}\neq 0$ for 
$G=\PSL_d(\bF_q)$ $(d\geq 3)$ by Lemma \ref{lemp3} (i). 
By the following lemma (Lemma \ref{lemP}), there exists 
a subgroup $C_p\rtimes C_d\leq G$ which is transitive in $S_p$. 
Hence we obtain that $[J_{G/H}]^{fl}\neq 0$ 
by Theorem \ref{th15} and Lemma \ref{lemp3} (i). 

For (8), we define $G_{m^\prime}:=\PSL_2(\bF_{2^e})\rtimes C_{2^{m^\prime}}$ 
where $0\leq m^\prime\leq m$ and $e=2^m$. 
Then $G_0=\PSL_2(\bF_{2^e})<G=G_{m^\prime}\leq G_m
={\rm P\Gamma L}_2(\bF_{2^e})$ with $1\leq m^\prime\leq m$. 
For $0\leq m^\prime\leq m$, define the normalizer 
$H_{m^\prime}=N_{G_{m^\prime}}({\rm Syl}_p(G_{m^\prime}))$ of 
${\rm Syl}_p(G_{m^\prime})\simeq C_p$ in $G_{m^\prime}$ 
where ${\rm Syl}_p(G_{m^\prime})$ is a $p$-Sylow subgroup of $G_{m^\prime}$. 
By Lemma \ref{lemP}, we obtain that $H_0\simeq D_p$ 
and $[G_0:H_0]=|G_0|/|H_0|=2^e(2^e-1)(2^e+1)/(2(2^e+1))
=2^{e-1}(2^e-1)$. 
We see that $[G_{m^\prime}:H_{m^\prime}]$ 
equals $[G_{m^\prime-1}:H_{m^\prime-1}]$ or 
$2\times [G_{m^\prime-1}:H_{m^\prime-1}]$. 
But the latter is impossible because 
$[G_{m^\prime}:H_{m^\prime}]\equiv 1\pmod{p}$ for any 
$0\leq m^\prime\leq m$ by Sylow theorem. 
Hence $[G_{m^\prime}:H_{m^\prime}]=[G_0:H_0]$ 
for $1\leq m^\prime\leq m$. 
This implies that 
$H_{m^\prime}\simeq C_p\rtimes C_{2^{m^\prime+1}}\leq G=G_{m^\prime}$ 
$(1\leq m^\prime\leq m)$. 
Because $H_{m^\prime}\leq S_p$ is transitive, 
it follows from Theorem \ref{th15} and Lemma \ref{lemp3} (i) 
that $[J_{G/H}]^{fl}\neq 0$. 
\qed

\begin{lemma}\label{lemP}
Let $d\geq 2$ be an integer, $q$ be a prime power and 
$p=(q^d-1)/(q-1)$ be a prime number. 
Let $G=\PSL_d(\bF_q)$ be a transitive subgroup of $S_p$, 
${\rm Syl}_p(G)\simeq C_p$ be a $p$-Sylow subgroup of $G$ 
and $H=N_G({\rm Syl}_p(G))$ be the normalizer of ${\rm Syl}_p(G)$ in $G$. 
Then $d\mid p-1$ is a prime number and 
$H\simeq C_p\rtimes C_d$ is a transitive subgroup of $S_p$ of order $pd$. 
\end{lemma}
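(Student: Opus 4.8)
The plan is to realise $G=\PSL_d(\bF_q)$ through its natural (doubly transitive) action on the $p=(q^d-1)/(q-1)$ points of $\mathbb{P}^{d-1}(\bF_q)$, to identify a Sylow $p$-subgroup with the image of a Singer cycle, and to control its normaliser through the Frobenius automorphism; the passage from $\PGL_d$ to $\PSL_d$ will be the delicate point.

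First I would dispose of the two arithmetic claims. That $d$ is prime follows at once from the factorisation
\[
p=\frac{q^d-1}{q-1}=\frac{q^a-1}{q-1}\cdot\frac{q^d-1}{q^a-1}
\]
whenever $d=ab$ with $a,b>1$, in which both factors exceed $1$, contradicting primality of $p$. For $d\mid p-1$ I would compute the order of $q$ modulo $p$: since $p\mid q^d-1$ while $0<q^i-1<p$ for $1\le i<d$ (because $p=1+q+\cdots+q^{d-1}>q^{d-1}$), the order of $q$ is exactly $d$, so $d\mid p-1$. The same computation gives $p\mid q^i-1$ iff $d\mid i$, so $p$ divides exactly one factor of $|\GL_d(\bF_q)|=q^{d(d-1)/2}\prod_{i=1}^d(q^i-1)$, namely $q^d-1=p(q-1)$; as $\gcd(p,q-1)=1$ this yields $v_p(|\GL_d(\bF_q)|)=1$, hence $v_p(|G|)=1$ and ${\rm Syl}_p(G)\simeq C_p$, in agreement with the hypothesis.

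Next I would set up the Singer picture. Identifying $\bF_q^d$ with $\bF_{q^d}$ and letting $\omega$ generate $\bF_{q^d}^\times$, multiplication by $\omega$ is a Singer element $s\in\GL_d(\bF_q)$ of order $q^d-1$, and on the points $[\omega^i]$ $(i\in\bZ/p)$ it acts as the translation $i\mapsto i+1$; thus $P:=\langle\bar s\rangle\le\PGL_d(\bF_q)$ has order $p$, is generated by a $p$-cycle, lies in $\PSL_d(\bF_q)$, and acts regularly, so it is a Sylow $p$-subgroup and is transitive. The Frobenius $\phi:x\mapsto x^q$ is $\bF_q$-linear, satisfies $\phi s\phi^{-1}=s^q$, and acts on $P$ as multiplication by $q$, of order $d$. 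Using that the Singer cycle is self-centralising in $\GL_d(\bF_q)$ (a matrix commuting with $s$ is $\bF_{q^d}$-linear, hence lies in $\bF_{q^d}^\times$), one has $N_{\GL_d(\bF_q)}(S)=S\rtimes\langle\phi\rangle$ for $S=\langle s\rangle$. Since the scalars $Z$ lie in $S$, any $\bar g$ normalising $P$ lifts to $g$ normalising $S$, whence
\[
N_{\PGL_d(\bF_q)}(P)=\bar S\rtimes\langle\bar\phi\rangle\simeq C_p\rtimes C_d,\qquad |N_{\PGL_d(\bF_q)}(P)|=pd.
\]

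Finally I would descend via $H=N_{\PSL_d(\bF_q)}(P)=\PSL_d(\bF_q)\cap N_{\PGL_d(\bF_q)}(P)$. Since $P\le H$ and $d$ is prime, $|H|$ is either $p$ or $pd$, and everything comes down to whether $\bar\phi\in\PSL_d(\bF_q)$. By the normal basis theorem the characteristic polynomial of $\phi$ is $X^d-1$, so $\det\phi=(-1)^{d-1}$; this is $1$ when $d$ is odd, and when $d=2$ we have $p=q+1$, so $d\mid p-1$ reads $2\mid q$, forcing $q$ even and $-1=1$ in $\bF_q$, so again $\phi\in\SL_d(\bF_q)$. Hence $N_{\PGL_d(\bF_q)}(P)\subseteq\PSL_d(\bF_q)$, giving $H\simeq C_p\rtimes C_d$ of order $pd$, with transitivity inherited from $P$. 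I expect the main obstacle to be exactly this $\PSL$-versus-$\PGL$ distinction: one must check that the order-$d$ Frobenius symmetry genuinely survives in $\PSL_d(\bF_q)$, which is where the primality of $d$ (ruling out intermediate normaliser orders) and the forced characteristic two in the case $d=2$ are essential.
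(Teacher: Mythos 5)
Your argument is correct, but it takes a genuinely different route from the paper's, which never touches the Singer cycle. After the same factorization showing $d$ is prime, the paper proves $\gcd\{d,q-1\}=1$ (if $d\mid q-1$ then $p\equiv d\equiv 0\pmod{d}$, impossible since $p>d$ are both prime), computes $|G|=p\cdot\prod_{i=1}^{d-1}(q^d-q^i)$, and establishes the congruence $\prod_{i=1}^{d-1}(X^d-X^i)\equiv d\pmod{\Phi_d(X)}$ by evaluating at primitive $d$-th roots of unity; specializing $X=q$ gives $|G|/p\equiv d\pmod{p}$, Sylow's theorem ($[G:H]\equiv 1\pmod{p}$) then gives $|H|/p\equiv d\pmod{p}$, and since $H$, normalizing a subgroup generated by a $p$-cycle, is a transitive subgroup of $C_p\rtimes C_{p-1}$, hence $H\simeq C_p\rtimes C_b$ with $b\mid p-1<p$, this forces $b=d$ (so $d\mid p-1$ falls out at the end, whereas you get it up front as the order of $q$ modulo $p$). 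Your construction buys explicit structural information --- $H$ is concretely $\bar S\rtimes\langle\bar\phi\rangle$ --- at the cost of importing the normalizer of a Singer cycle in $\GL_d(\bF_q)$, which you quote essentially as known: self-centralization gives $N/S\hookrightarrow {\rm Aut}(S)$, but one must still argue (e.g.\ by comparing the characteristic polynomials of $s$ and $s^k$, whose roots are the Galois conjugates of $\omega$) that only the Galois twists $k\equiv q^j$ occur; the paper's counting is heavier on arithmetic but needs nothing beyond Sylow's theorem. One simplification you missed: your anticipated ``delicate point'' is vacuous, since $\gcd\{d,q-1\}=1$ already implies $\PSL_d(\bF_q)=\PGL_d(\bF_q)$, so the normal-basis determinant computation for $\phi$ and the characteristic-two discussion for $d=2$ can be deleted entirely (and $P\le\PSL_d(\bF_q)$ is automatic because $[\PGL_d(\bF_q):\PSL_d(\bF_q)]=\gcd\{d,q-1\}$ is prime to $p$).
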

\begin{proof}
Step 1: $d$ is a prime number. 
Suppose not. Then $d=ab$ and $p=(q^{ab}-1)/(q-1)=q^{ab-1}+\cdots+q+1=(q^{(a-1)b}+\cdots+q^b+1)(q^{b-1}+\cdots+q+1)$. Contradiction. 

Step 2: $p>d$. Because $q\geq 2$, we have $p=q^{d-1}+\cdots+q+1>d$. 

Step 3: $\gcd\{d,q-1\}=1$. Suppose not. 
Then we have $d\mid q-1$ since $d$ is a prime number by Step 1. 
This implies that $q\equiv 1\pmod{d}$ and 
hence $p=q^{d-1}+\cdots+q+1\equiv 0\pmod{d}$. 
But it is impossible because $p>d$ by Step 2 and 
both $p$ and $d$ are prime numbers by Step 1. 

Step 4: $|G|=p\cdot \prod_{i=1}^{d-1}(q^d-q^i)$. 
By the definition of $G$, we see that 
$|G|=\frac{|\GL_d(\bF_q)|}{(q-1)\gcd\{d,q-1\}}$. 
Hence we have $|G|=(\prod_{i=0}^{d-1}(q^d-q^i))/(q-1)
=p\cdot \prod_{i=1}^{d-1}(q^d-q^i)$ by 
Step 3. 

Step 5: $\frac{|G|}{p}\equiv d\pmod{p}$. 
Define $f(X)=\prod_{i=1}^{d-1}(X^d-X^i)$ and $\Phi_d(X)=X^{d-1}+\cdots+X+1$. 
Let $\zeta_d$ be a primitive $d$-th root of unity. 
For $a=1,\ldots,d-1$, we have 
$f(\zeta_d^a)=\prod_{i=1}^{d-1}(1-\zeta_d^{ai})=\Phi_d(1)=d$. 
This implies that $f(X)\equiv d\pmod{\Phi_d(X)}$. 
By Step 4, specializing $X$ to $q$, we have 
$\frac{|G|}{p}=f(q)\equiv d\pmod{p}$ because $\Phi_d(q)=p$. 

Step 6: $|H|\equiv pd\pmod{p^2}$. 
Let $s$ be the number of $p$-Sylow subgroups of $G$. 
Note that ${\rm Syl}_p(G)\simeq C_p$ because $G$ is transitive in $S_p$. 
By Sylow theorem and the definition of $H=N_G({\rm Syl}_p(G))$, 
we have $s\equiv 1\pmod{p}$ and $s=[G:H]=|G|/|H|$. 
Hence $\frac{|H|}{p}\equiv d\pmod{p}$ by Step 5. 
This implies that $|H|\equiv pd\pmod{p^2}$. 

Step 7: $H\simeq C_p\rtimes C_d$ is transitive in $S_p$. 
From the definition of $H$, we see that 
$H\simeq C_p\rtimes C_b$ for some $b\mid p-1$ and $H$ is transitive in $S_p$. 
We know that $2\leq d<p$ by Step 2. 
Hence, by Step 6, $|H|=pd$ and $b=d$. 
\end{proof}
%
%%%%%%%%%%%%%%%%%%%%%%%%%%%%%%%%%%%%%%%%%%%%%%%%%%%%%%%
{\it Proof of Theorem \ref{thmain2}.}
We may assume that 
$H$ is the stabilizer of one of the letters in $G$ 
(see the fourth paragraph of Section \ref{seInt}).

(1) The case $8Tm$ $(1\leq m\leq 50)$. 

(1-1) The case where $K/k$ is Galois: $1\leq m\leq 5$. 
For $8T1\simeq C_8$, $8T2\simeq C_4\times C_2$, 
$8T3\simeq C_2\times C_2\times C_2$, 
$8T4\simeq D_4$ and $8T5\simeq Q_8$, 
$K/k$ is a Galois extension.
Hence, it follows from Theorem \ref{th13-1} that 
$T$ is not retract $k$-rational for $8T2$, $8T3$, $8T4$ and $8T5$. 
By Theorem \ref{th13-2}, $T$ is stably $k$-rational for $8T1$. 

(1-2) The case where $K/k$ is not Galois: $6\leq m\leq 50$. 
Let $L/k$ be a Galois closure of $K/k$. 
If $G={\rm Gal}(L/k)$ is a $2$-group, 
then by Theorem \ref{th14} the flabby class $\rho_G(J_{G/H})=[J_{G/H}]^{fl}$ 
is not invertible and $T$ is not retract $k$-rational. 
Hence we assume that $G={\rm Gal}(L/k)$ is not a $2$-group. 
Take a $2$-Sylow subgroup $G_2$ of $G$. 
Then we see that $G_2$ is transitive in $S_8$ 
and is not cyclic. 
Then, again by Theorem \ref{th14}, we get 
the flabby class $\rho_{G_2}(J_{G_2/H_2})$ is not invertible 
where $H_2$ is a $2$-Sylow subgroup of $H$. 
Hence it follows from Lemma \ref{lemp3} (ii) 
that $\rho_G(J_{G/H})$ is not invertible 
and $T$ is not retract $k$-rational.

(2) The case $9Tm$ $(1\leq m\leq 34)$. 

(2-1) The case where $K/k$ is Galois: $m=1,2$. 
For $9T1\simeq C_9$ and $9T2\simeq C_3\times C_3$, 
$K/k$ is a Galois extension. 
Then it follows from Theorem \ref{th13-1} and Theorem \ref{th13-2} 
that $T$ is stably $k$-rational for $9T1$ and 
$T$ is not retract $k$-rational for $9T2$. 

(2-2) The case where $K/k$ is not Galois: $3\leq m\leq 34$. 
If $G={\rm Gal}(L/k)$ is a $3$-group 
where $L/k$ be a Galois closure of $K/k$, 
then $T$ is not retract $k$-rational by Theorem \ref{th14}. 
Thus we assume that $G$ is not a $3$-group. 
Take a $3$-Sylow subgroup $G_3$ of $G$. 
Then we see that $G_3$ is transitive in $S_9$ 
and is not cyclic except for $9T2\simeq D_9$ and $9T27\simeq \PSL_2(\bF_8)$. 
Hence $T$ is not retract $k$-rational as the same to (1) 
except for $9T3$ and $9T27$. 
For $9T3\simeq D_9$, $T$ is stably $k$-rational by Theorem \ref{th15}. 
For $9T27$, using the command  
{\tt IsInvertibleF(Norm1TorusJ(9,27))}, 
we see that $T$ is retract $k$-rational (see Example \ref{ex10} below). 
(We do not know whether $T$ is stably $k$-rational for $9T27\simeq \PSL_2(\bF_8)$.)

(3) The case $10Tm$ $(1\leq m\leq 45)$. 

(3-1) The case where $K/k$ is Galois: $m=1,2$. 
For $10T1\simeq C_{10}$ and $10T2\simeq D_5$, 
$K/k$ is a Galois extension. 
It follows from Theorem \ref{th13-2} that 
$T$ is stably $k$-rational for $10T1$ and $10T2$. 

(3-2) The case where $K/k$ is not Galois: $3\leq m\leq 45$. 

Case 1: $m=3$. 
For $10T3\simeq D_{10}$, 
using the command {\tt IsInvertibleF(Norm1TorusJ(10,3))}, 
we see that $[J_{G/H}]^{fl}$ is invertible and $T$ is retract $k$-rational. 
Using the method (Method III) given as in 
\cite[Section 5.7]{HY17}, we may take the flabby class 
$F=[J_{G/H}]^{fl}$ of rank $13$ and construct the isomorphism: 
$\bZ[G/C_2]\oplus\bZ[G/C_5]\oplus\bZ\simeq \bZ[G/D_5]\oplus F$ 
(see Example \ref{ex10}). 
Hence we conclude that $F=[J_{G/H}]^{fl}=0$ 
and $T$ is stably $k$-rational (see also \cite[Example 5.8]{HY17}). 

Case 2: $m=4, 5, 11, 12, 22$. 
For $m=22$, by using the function  
{\tt IsInvertibleF(Norm1TorusJ(10,22))}, 
we see that $[J_{G/H}]^{fl}$ is invertible and
$T$ is retract $k$-rational (see Example \ref{ex10}). 
For $m=4, 5, 11, 12$, 
$T$ is also retract $k$-rational because 
$10T4$, $10T5$, $10T11$, $10T12\leq 10T22$ and Lemma \ref{lemp3} (ii). 

For $m=4, 5$, using the function 
{\tt PossibilityOfStablyPermutationF(Norm1TorusJ(10,}$m${\tt ))}, 
we get that $[J_{G/H}]^{fl}\neq 0$ and 
$T$ is not stably $k$-rational.  
For $m=12, 22$, it follows from Lemma \ref{lemp3} (ii) 
and $10T4\simeq F_{20}\leq 10T12\simeq S_5$, 
$10T5\simeq F_{20}\times C_2\leq 10T22\simeq S_5\times C_2$, 
%$[M_G]^{fl}\neq 0$ and 
$T$ is also not stably $k$-rational 
(see Example \ref{ex10}). 
(We do not know whether $T$ is stably $k$-rational 
for $10T11\simeq A_5\times C_2$.) 

Case 3: $6\leq m\leq 35$ and $m\neq 11,12,22$. 
By the following inclusions of groups $G=10Tm$ and Lemma \ref{lemp3} (ii), 
it is enough to show that $T$ is not retract $k$-rational 
($[J_{G/H}]^{fl}$ is not invertible) for $m=6, 7, 8, 10, 18$:\\
$10T6\leq 10T9\leq 10T17, 10T19$,\\
$10T7\leq 10T13$,\\
$10T7\leq 10T26\leq 10T30, 10T31, 10T32\leq 10T35$,\\
$10T8\leq 10T14, {\bf 10T15}, 10T16$,\\
$10T10\leq 10T20, {\bf 10T21}\leq 10T27$,\\
$10T18\leq 10T28\leq 10T33, 10T42\leq 10T43, 10T44\leq 10T45$\\
and\\
${\bf 10T15}\leq 10T23, 10T24, 10T25\leq 10T29$,\\
${\bf 10T15}\leq 10T34\leq 10T36, 10T37, 10T38\leq 10T39$,\\
${\bf 10T21}\leq 10T40\leq 10T41$. 

For $m=6, 7, 8, 10, 18$, 
by using the function {\tt IsInvertibleF(Norm1TorusJ(10,}$m${\tt ))}, 
we may confirm that $[J_{G/H}]^{fl}$ is not invertible and 
$T$ is not retract $k$-rational.\qed\\

We give the GAP \cite{GAP} computations of the functions in the proof 
in Example \ref{ex10} below. 
Some related programs are also available from 
{\tt https://www.math.kyoto-u.ac.jp/\~{}yamasaki/Algorithm/RatProbAlgTori/}.

\begin{example}[Computations for $nTm\leq S_n$ with $n=8, 9, 10$]\label{ex10}
We give the demonstration of the GAP computations 
in the proof of Theorem \ref{thmain2} 
(see \cite[Chapter 5]{HY17} for the explanation of the functions). 

%gap> t8:=List([1..50],x->TransitiveGroup(8,x));
%[ C(8)=8, 4[x]2, E(8)=2[x]2[x]2, D_8(8)=[4]2, Q_8(8), D(8),
%  1/2[2^3]4, 2D_8(8)=[D(4)]2, E(8):2=D(4)[x]2, [2^2]4,
%  1/2[2^3]E(4)=Q_8:2, 2A_4(8)=[2]A(4)=SL(2,3), E(8):3=A(4)[x]2,
%  S(4)[1/2]2=1/2(S_4[x]2), [1/4.cD(4)^2]2, 1/2[2^4]4, [4^2]2,
%  E(8):E_4=[2^2]D(4), E(8):4=[1/4.eD(4)^2]2, [2^3]4,
%  1/2[2^4]E(4)=[1/4.dD(4)^2]2, E(8):D_4=[2^3]2^2, 2S_4(8)=GL(2,3),
%  E(8):D_6=S(4)[x]2, E(8):7=F_56(8), 1/2[2^4]eD(4), [2^4]4,
%  1/2[2^4]dD(4), E(8):D_8=[2^3]D(4), 1/2[2^4]cD(4), [2^4]E(4),
%  [2^3]A(4), E(8):A_4=[1/3.A(4)^2]2=E(4):6,
%  1/2[E(4)^2:S_3]2=E(4)^2:D_6, [2^4]D(4), E(8):F_21,
%  L(8)=PSL(2,7), [2^4]A(4), [2^3]S(4), 1/2[2^4]S(4),
%  E(8):S_4=[E(4)^2:S_3]2=E(4)^2:D_12, [A(4)^2]2, L(8):2=PGL(2,7),
%  [2^4]S(4), [1/2.S(4)^2]2, 1/2[S(4)^2]2, [S(4)^2]2,
%  E(8):L_7=AL(8), A8, S8 ]

%gap> t9:=List([1..34],x->TransitiveGroup(9,x));
%[ C(9)=9, E(9)=3[x]3, D(9)=9:2, S(3)[x]3, S(3)[1/2]S(3)=3^2:2,
%  1/3[3^3]3, E(9):3=[3^2]3, S(3)[x]S(3)=E(9):D_4, E(9):4,
%  [3^2]S(3)_6, E(9):6=1/2[3^2:2]S(3), [3^2]S(3),
%  E(9):D_6=[3^2:2]3=[1/2.S(3)^2]3, M(9)=E(9):Q_8, E(9):8,
%  E(9):D_8, [3^3]3=3wr3, E(9):D_12=[3^2:2]S(3)=[1/2.S(3)^2]S(3),
%  E(9):2D_8, [3^3]S(3)=3wrS(3), 1/2.[3^3:2]S(3), [3^3:2]3,
%  E(9):2A_4, [3^3:2]S(3), [1/2.S(3)^3]3, E(9):2S_4, L(9)=PSL(2,8),
%  [S(3)^3]3=S(3)wr3, [1/2.S(3)^3]S(3), 1/2[S(3)^3]S(3),
%  [S(3)^3]S(3)=S(3)wrS(3), L(9):3=P|L(2,8), A9, S9 ]

\begin{verbatim}
gap> Read("FlabbyResolution.gap");

gap> NrTransitiveGroups(8);
50
gap> t8:=List([1..50],x->TransitiveGroup(8,x));;
gap> Sy2t8:=List(t8,x->SylowSubgroup(x,2));;
gap> List(Sy2t8,x->Length(Orbits(x,[1..8])));
[ 1, 1, 1, 1, 1, 1, 1, 1, 1, 1, 1, 1, 1, 1, 1, 1, 1, 1, 1, 1, 1, 1, 1, 1, 1, 
  1, 1, 1, 1, 1, 1, 1, 1, 1, 1, 1, 1, 1, 1, 1, 1, 1, 1, 1, 1, 1, 1, 1, 1, 1 ]
gap> Filtered([1..50],x->IsCyclic(Sy2t8[x]));
[ 1 ]

gap> NrTransitiveGroups(9);
34
gap> t9:=List([1..34],x->TransitiveGroup(9,x));;
gap> Sy3t9:=List(t9,x->SylowSubgroup(x,3));;
gap> List(Sy3t9,x->Length(Orbits(x,[1..9])));
[ 1, 1, 1, 1, 1, 1, 1, 1, 1, 1, 1, 1, 1, 1, 1, 1, 1, 1, 1, 1, 1, 1, 1, 1, 1, 
  1, 1, 1, 1, 1, 1, 1, 1, 1 ]
gap> Filtered([1..34],x->IsCyclic(Sy3t9[x]));
[ 1, 3, 27 ]
gap> IsInvertibleF(Norm1TorusJ(9,27)); # T is retract k-rational for 9T27=PSL(2,8)
true

gap> NrTransitiveGroups(10);
45                          
gap> t10:=List([1..45],x->TransitiveGroup(10,x));
[ C(10)=5[x]2, D(10)=5:2, D_10(10)=[D(5)]2, 1/2[F(5)]2, F(5)[x]2, [5^2]2, 
  A_5(10), [2^4]5, [1/2.D(5)^2]2, 1/2[D(5)^2]2, A(5)[x]2, 1/2[S(5)]2=S_5(10a),
  S_5(10d), [2^5]5, [2^4]D(5), 1/2[2^5]D(5), [5^2:4]2, [5^2:4]2_2,            
  [5^2:4_2]2, [5^2:4_2]2_2, [D(5)^2]2, S(5)[x]2, [2^5]D(5), [2^4]F(5),        
  1/2[2^5]F(5), L(10)=PSL(2,9), [1/2.F(5)^2]2, 1/2[F(5)^2]2, [2^5]F(5),       
  L(10):2=PGL(2,9), M(10)=L(10)'2, S_6(10)=L(10):2, [F(5)^2]2, [2^4]A(5),     
  L(10).2^2=P|L(2,9), [2^5]A(5), [2^4]S(5), 1/2[2^5]S(5), [2^5]S(5),          
  [A(5)^2]2, [1/2.S(5)^2]2=[A(5):2]2, 1/2[S(5)^2]2, [S(5)^2]2, A10, S10 ]

gap> G:=Norm1TorusJ(10,3); # G=10T3=D10 
<matrix group with 2 generators>
gap> IsInvertibleF(G); # T is retract k-rational for 10T3
true
gap> mis:=SearchCoflabbyResolutionBase(TransposedMatrixGroup(G),2);;
gap> List(mis,Length); # searching suitable flabby class F
[ 30, 22, 30, 30, 22, 30 ]
gap> mi:=mis[2];;
gap> Rank(FlabbyResolutionFromBase(G,mi).actionF.1); # F is of rank 13 (=22-9)
13
gap> ll:=PossibilityOfStablyPermutationFFromBase(G,mi);
[ [ 1, -1, 0, -1, 2, 0, 0, 1, 1, -1, -1 ], 
  [ 0, 0, 1, 0, 0, 1, -1, 0, 0, 1, -1 ] ]
gap> l:=ll[Length(ll)];
[ 0, 0, 1, 0, 0, 1, -1, 0, 0, 1, -1 ]
gap> [l[3],l[6],l[10],l[7],l[11]];
[ 1, 1, 1, -1, -1 ]
gap> ss:=List(ConjugacyClassesSubgroups2(G),
> x->StructureDescription(Representative(x)));
[ "1", "C2", "C2", "C2", "C2 x C2", "C5", "D10", "D10", "C10", "D20" ]
gap> bp:=StablyPermutationFCheckPFromBase(G,mi,Nlist(l),Plist(l));;
gap> Length(bp);
19
gap> Length(bp[1]); # rank of the both sides of the isomorphism is 15
15
gap> rs:=RandomSource(IsMersenneTwister);
<RandomSource in IsMersenneTwister>
gap> rr:=List([1..10000],x->List([1..19],y->Random(rs,[-1..1])));;
gap> Filtered(rr,x->Determinant(x*bp)^2=1);
[ [ 0, 1, 0, -1, 0, -1, 0, -1, -1, 0, -1, 1, 0, 1, 1, 0, 0, 1, 1 ] ]
gap> p:=last[1]*bp;
[ [ 0, 1, 1, 0, 1, 0, 1, 0, 1, 0, 0, -1, -1, 0, 0 ], 
  [ 1, 0, 0, 1, 0, 1, 0, 1, 0, 1, -1, 0, 0, -1, 0 ], 
  [ -1, 0, -1, -1, 0, -1, -1, -1, 0, -1, 1, 0, 1, 1, 0 ], 
  [ 0, 0, 0, 0, 0, 0, 0, 0, 0, 0, 1, 2, 1, 2, 1 ], 
  [ -1, -1, -1, 0, -1, -1, -1, 0, -1, 0, 1, 1, 1, 0, 0 ], 
  [ -1, -1, 0, -1, -1, -1, 0, -1, 0, -1, 1, 1, 0, 1, 0 ], 
  [ 0, -1, -1, -1, -1, 0, -1, 0, -1, -1, 1, 1, 1, 0, 0 ], 
  [ -1, 0, -1, -1, 0, -1, 0, -1, -1, -1, 1, 1, 0, 1, 0 ], 
  [ -1, -1, -1, 0, -1, 0, -1, -1, -1, 0, 1, 1, 1, 0, 0 ], 
  [ -1, -1, 0, -1, 0, -1, -1, -1, 0, -1, 1, 1, 0, 1, 0 ], 
  [ 0, -1, -1, 0, -1, -1, -1, 0, -1, -1, 1, 1, 1, 0, 0 ], 
  [ -1, 0, 0, -1, -1, -1, 0, -1, -1, -1, 1, 1, 0, 1, 0 ], 
  [ 0, -1, -1, -1, -1, 0, -1, -1, -1, 0, 1, 1, 1, 0, 0 ], 
  [ 0, -1, -1, -1, -1, 0, -1, -1, -1, 0, 0, 1, 1, 1, 0 ], 
  [ -1, 0, 0, -1, -1, -1, 0, -1, -1, -1, 1, 0, 1, 1, 0 ] ]
gap> Determinant(p); # T is stably k-rational for 10T3
1
gap> StablyPermutationFCheckMatFromBase(G,mi,Nlist(l),Plist(l),p); 
true

gap> IsInvertibleF(Norm1TorusJ(10,22)); # T is retract k-rational for 10T22 
true

gap> PossibilityOfStablyPermutationF(Norm1TorusJ(10,4)); # T is not stably k-rational for 10T4
[ [ 1, 1, 2, 1, -1, 0, -2 ] ]
gap> PossibilityOfStablyPermutationF(Norm1TorusJ(10,5)); # T is not stably k-rational for 10T5
[ [ 1, 0, 0, 1, 0, 1, 1, 1, 0, 0, 0, -1, 0, -1, -1, 2, -2 ],
  [ 0, 1, 0, 0, -1, -1, -1, 0, 2, 0, -1, 0, 1, 1, 1, -2, 0 ],
  [ 0, 0, 1, 2, -1, 2, 2, 2, -2, -1, 0, -2, 1, -2, -2, 4, -2 ] ]

gap> IsInvertibleF(Norm1TorusJ(10,6)); # T is not retract k-rational for 10T6
false
gap> IsInvertibleF(Norm1TorusJ(10,7)); # T is not retract k-rational for 10T7
false
gap> IsInvertibleF(Norm1TorusJ(10,8)); # T is not retract k-rational for 10T8
false
gap> IsInvertibleF(Norm1TorusJ(10,10)); # T is not retract k-rational for 10T10
false
gap> IsInvertibleF(Norm1TorusJ(10,18)); # T is not retract k-rational for 10T18 
false

gap> sub:=List([1..45],x->Filtered([1..x], # checking subgroups of 10Tm (may not all)
> y->IsSubgroup(TransitiveGroup(10,x),TransitiveGroup(10,y))));
[ [ 1 ], [ 2 ], [ 1, 2, 3 ], [ 4 ], [ 1, 2, 3, 4, 5 ], [ 1, 6 ], [ 7 ], [ 8 ],
  [ 1, 2, 3, 6, 9 ], [ 10 ], [ 1, 2, 3, 11 ], [ 4, 12 ], [ 4, 7, 13 ],
  [ 1, 8, 14 ], [ 8, 15 ], [ 2, 8, 16 ], [ 1, 2, 3, 4, 5, 6, 9, 17 ], [ 18 ],
  [ 1, 2, 3, 6, 9, 19 ], [ 4, 10, 20 ], [ 1, 2, 3, 6, 9, 10, 21 ],
  [ 1, 2, 3, 4, 5, 11, 12, 22 ], [ 1, 2, 3, 8, 14, 15, 16, 23 ], [ 8, 15, 24 ],
  [ 4, 8, 15, 25 ], [ 26 ], [ 1, 2, 3, 4, 5, 6, 9, 10, 17, 19, 20, 21, 27 ],
  [ 18, 28 ], [ 1, 2, 3, 4, 5, 8, 14, 15, 16, 23, 24, 25, 29 ], [ 26, 30 ],
  [ 26, 31 ], [ 26, 32 ],
  [ 1, 2, 3, 4, 5, 6, 9, 10, 17, 18, 19, 20, 21, 27, 28, 33 ], [ 8, 15, 34 ],
  [ 26, 30, 31, 32, 35 ], [ 1, 2, 3, 8, 11, 14, 15, 16, 23, 34, 36 ],
  [ 8, 15, 24, 34, 37 ], [ 4, 8, 12, 15, 25, 34, 38 ],
  [ 1, 2, 3, 4, 5, 8, 11, 12, 14, 15, 16, 22, 23, 24, 25, 29, 34, 36, 37, 38, 39 ],
  [ 1, 2, 3, 6, 9, 10, 11, 21, 40 ],
  [ 1, 2, 3, 4, 5, 6, 9, 10, 11, 12, 17, 19, 20, 21, 22, 27, 40, 41 ],
  [ 18, 28, 42 ],
  [ 1, 2, 3, 4, 5, 6, 9, 10, 11, 12, 17, 18, 19, 20, 21, 22, 27, 28, 33, 40, 41,
      42, 43 ], [ 7, 8, 15, 18, 24, 26, 28, 31, 34, 37, 42, 44 ],
  [ 1, 2, 3, 4, 5, 6, 7, 8, 9, 10, 11, 12, 13, 14, 15, 16, 17, 18, 19, 20, 21,
      22, 23, 24, 25, 26, 27, 28, 29, 30, 31, 32, 33, 34, 35, 36, 37, 38, 39, 40,
      41, 42, 43, 44, 45 ] ]

gap> t1026:=TransitiveGroup(10,26); # checking that 10T7=A5 is a subgroup of 10T26=A6
L(10)=PSL(2,9)
gap> st1026:=Filtered(List(ConjugacyClassesSubgroups(t1026),
> Representative),x->Length(Orbits(x,[1..10]))=1);                                                   
[ Group([ (1,4)(2,5)(6,9)(7,8), (1,8,10)(2,6,7)(3,9,4) ]), Group([ (1,4)(2,5)  
  (6,9)(7,8), (1,10,6)(2,9,3)(4,8,5) ]), Group([ (1,4)(2,5)(6,9)               
  (7,8), (1,3,8,9)(4,5,10,6) ]) ]                                              
gap> List(st1026,StructureDescription); # 10T7=A5 is a subgroup of 10T26=A6
[ "A5", "A5", "A6" ]
\end{verbatim}
\end{example}

\end{document}